\documentclass[a4paper,12pt,english]{amsart}
\usepackage{babel}
\usepackage[utf8]{inputenc}
\usepackage[T1]{fontenc}
\usepackage{amssymb,amsfonts,amsmath,amsthm}
\usepackage{color}
\usepackage{enumitem}
\usepackage{graphicx}
\usepackage{xspace}
\usepackage{mathtools}
\usepackage[stretch=16,shrink=16,step=2,kerning=true,protrusion=true,final]{microtype} 

\usepackage[colorlinks=true, linkcolor=blue, citecolor=red, urlcolor=blue]{hyperref}

\newtheorem{theorem}{Theorem}
\newtheorem{proposition}[theorem]{Proposition}
\newtheorem{corollary}[theorem]{Corollary}
\newtheorem{lemma}[theorem]{Lemma}

\theoremstyle{definition}
\newtheorem{definition}[theorem]{Definition}

\theoremstyle{remark}
\newtheorem{remark}[theorem]{Remark}

\newcommand{\ie}{i.e.\@\xspace}
\newcommand{\eg}{e.g.\@\xspace}
\newcommand{\cf}{c.f.\@\xspace}
\newcommand{\resp}{resp.\@\xspace}

\newcommand{\CC}{\mathbf{C}}

\newcommand{\RR}{\mathbf{R}}

\newcommand{\NN}{\mathbf{N}}

\newcommand{\C}{\CC}
\newcommand{\R}{\RR}

\newcommand{\N}{\NN}

\newcommand{\GL}{\operatorname{GL}}
\newcommand{\SL}{\operatorname{SL}}
\newcommand{\PSL}{\operatorname{PSL}}
\newcommand{\SO}{\operatorname{SO}}

\newcommand{\Sp}{\operatorname{Sp}}

\DeclareMathOperator{\Span}{Span}

\newcommand{\Ad}{\operatorname{Ad}}

\newcommand{\Id}{\operatorname{Id}}
\newcommand{\opp}{\mathrm{opp}}

\title[Generalizing Lusztig's total positivity II]{Generalizing Lusztig's total positivity II : geometric properties}

\author[O. Guichard]{Olivier Guichard}
\address{Universit\'e de Strasbourg, IRMA, 7 rue  Descartes, 67000 Strasbourg, France}
\email{olivier.guichard@math.unistra.fr}

\author[A. Wienhard]{Anna Wienhard}
\address{Max Planck Institute for Mathematics in the Sciences, Inselstr. 22, 04103 Leipzig, 
Germany}
\email{anna.wienhard@mis.mpg.de}

\thanks{
AW is supported by the European Research Council under ERC-Advanced Grant 101018839 and by the Hector Fellow Academy. She thanks the Institute for Advanced Study for its hospitality while some of this work was done.
AW thanks Misha Gekhtman for useful discussions and for raising the question whether the non-negative part of the flag variety is a closed ball.}

\AtBeginDocument{%
   \def\MR#1{}
}

\begin{document}

\numberwithin{theorem}{section} \numberwithin{equation}{section}

\begin{abstract}
Positive structures in Lie groups with respect to a subset $\Theta$ of the set of positive roots  provide a generalization of Lusztig's total positivity in split real Lie groups to the setting of general real semisimple Lie groups. In \cite{GW_pos} Lie groups $G$ admitting a positive structure were classified and many key properties of the unipotent positive semigroups were established. In this article we focus on the positive semigroup in $G$. We establish  key geometric properties of elements in the positive semigroup. Further we introduce corresponding positive and non-negative parts of flag varieties and determine the topology of the non-negative parts of flag varieties in many cases. For symplectic flag varieties we provide explicit descriptions of the positive and non-negative flag varieties. 
\end{abstract}

\maketitle

\tableofcontents

\section{Introduction}
Total positivity in $\mathrm{SL}(n,\mathbb{R})$ and more generally Lusztig's total positivity in split real Lie groups \cite{LusztigPosRed} plays an important role in different areas of mathematics, as well as in theoretical physics. Of particular interest are not just the totally positive semigroups in the Lie group, but also the positive and non-negative parts they define in flag varieties. The positive Grassmannian in particular has received a lot of attention due to its relevance for scattering amplitudes \cite{Arkani_etal_book}. 

In \cite{GW_pos} we introduced for semisimple real Lie groups $G$ the notion of positivity relative to a subset~$\Theta$ of the set of positive simple roots~$\Delta$. This provides a generalization of Lusztig's  total positivity, which is precisely positivity with respect the entire set~$\Delta$. In this case the group~$G$ has to be split. There are three further families of simple Lie groups admitting positive structures relative to proper subset~$\Theta$ of~$\Delta$: Hermitian Lie groups of tube type, indefinite orthogonal groups $\SO(p,q)$, $2< p<q$ and an exceptional family containing a real form of $F_4, E_6, E_7$ and $E_8$. 

For semisimple Lie groups $G$~endowed with a positive structure relative to~$\Theta \subset \Delta$, we introduced in \cite{GW_pos} the 
unipotent positive semigroup $U_{\Theta}^{>0}$ and a unipotent non-negative
semigroup $U_{\Theta}^{\geq 0}$ of the unipotent radical~$U_{\Theta}$ of the
standard parabolic subgroup~$P_{\Theta}$ determined
by $\Theta$.
Similarly, semigroups $U_{\Theta}^{\opp >0}$ and
$U_{\Theta}^{\opp \geq 0}$
in the opposite parabolic subgroup $P_{\Theta}^{\opp}$ were defined.
These semigroups give rise to the non-negative and positive semigroups $G_\Theta^{\geq 0}$ and $G_\Theta^{>0}$ in~$G$ \cite[Definition~4.2, Definition~4.4]{GW_pos}.

The general notion of positivity relative to a subset $\Theta$ provides an important framework for the study of higher Teichm\"uller spaces \cite{GLW}, \cite{Beyrer_Pozzetti}, \cite{BGLPW}, and is connected to the notion of magical triples introduced in \cite{Bradlow_Collier_etal}. 
The notion of positivity is also relevant beyond its connection to higher Teichmüller spaces, see \eg \cite{wienhardicm}. 
In this paper we focus on the study of the positive semigroup $G_{\Theta}^{> 0}$, the non-negative semigroups $G_{\Theta}^{\geq 0}$, and geometric properties of their elements,  as well as on the corresponding positive and non-negative parts in flag varieties $\mathsf{F}_{\Theta'}$ associated to $G$. In forthcoming work \cite{GW_alg} we focus on algebraic properties of positivity. Forthcoming work of the second author with Greenberg, Kaufman, and Niemeyer \cite{GKNW} develops cluster structures on semisimple Lie groups admitting a positive structure and contains several applications to algebraic properties of the positive semigroups as corollaries. 

\subsection*{Characterization of the positive semigroup}
The non-negative semigroup $G_{\Theta}^{\geq 0}$ is the semigroup generated by $U_{\Theta}^{\opp \geq 0}$, $U_\Theta^{\geq 0}$ and the connected component of the identity $L_{\Theta}^{\circ}$ of the Levi subgroup of~$P_\Theta$ and $P_{\Theta}^{\opp}$. The positive semigroup $G_{\Theta}^{> 0}$ is  the interior of $G_{\Theta}^{\geq 0}$.  
Our first result gives a characterization of  $G_{\Theta}^{> 0}$ and $G_{\Theta}^{\geq 0}$. 
\begin{theorem}\label{thm:positive_intro}
Let $G$ be a semisimple real Lie group admitting a positive structure relative to $\Theta$.
Then 
\begin{align*}
  G_{\Theta}^{>0} &= U_{\Theta}^{> 0} \cdot L_{\Theta}^{\circ} \cdot U_{\Theta}^{\opp > 0} = U_{\Theta}^{\opp >0} \cdot L_{\Theta}^{\circ} \cdot U_{\Theta}^{>0},\\
  G_{\Theta}^{\geq 0} &= U_{\Theta}^{\geq 0} \cdot L_{\Theta}^{\circ} \cdot U_{\Theta}^{\opp \geq 0} = U_{\Theta}^{\opp \geq 0} \cdot L_{\Theta}^{\circ} \cdot U_{\Theta}^{\geq 0}.
\end{align*} 
Furthermore, $G_{\Theta}^{\geq 0}$ is closed, and $G_{\Theta}^{\geq 0}G_{\Theta}^{>0} \subset G_{\Theta}^{>0}$. 
\end{theorem}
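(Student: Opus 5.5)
Our plan rests on the key properties of the unipotent semigroups established in \cite{GW_pos}, which we use freely:
\begin{enumerate}
\item $U_\Theta^{>0}$ is open in $U_\Theta$ and is the interior of $U_\Theta^{\geq 0}$.
\item $U_\Theta^{\geq 0}$ is closed, and $U_\Theta^{\geq 0}U_\Theta^{>0}\cup U_\Theta^{>0}U_\Theta^{\geq 0}\subset U_\Theta^{>0}$.
\item Both are invariant under conjugation by $L_\Theta^\circ$.
\item The analogous statements hold for the opposite semigroups.
\item The key transversality property: for $u\in U_\Theta^{\geq 0}$ and $v\in U_\Theta^{\opp\geq 0}$, the product $vu$ lies in the big cell $U_\Theta L_\Theta P$-decomposition $U_\Theta L_\Theta U_\Theta^{\opp}$. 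Its factors satisfy $vu=u' \ell v'$ with $u'\in U_\Theta^{\geq 0}$, $\ell\in L_\Theta^\circ$, $v'\in U_\Theta^{\opp\geq 0}$. Moreover $u'$ (resp.\ $v'$) is positive as soon as $u$ (resp.\ $v$) is.
\end{enumerate}
Property (5) is the one genuine input. If it is not already available in the form needed, we would prove it by reducing to generators, i.e.\ to one-parameter positive elements $x_\alpha(s)$ with $\alpha\in\Theta$ and $s$ in the cone $c_\alpha$, and computing the rank-one $\SL_2$-type (or $\SO(1,n)$-type) exchange relation $y_\alpha(t)x_\alpha(s)=x_\alpha(s')\,h\,y_\alpha(t')$ with $s',t'$ again in the cones. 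We would then propagate this by induction on word length.

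\textbf{Commutation step.} We first show the commutation statement $U_\Theta^{\opp\geq0}L_\Theta^\circ U_\Theta^{\geq0}\subset U_\Theta^{\geq0}L_\Theta^\circ U_\Theta^{\opp\geq0}$, together with its reverse. The forward inclusion follows from (5), since conjugating by $L_\Theta^\circ$ preserves the cones. The reverse inclusion follows by applying the same argument to the opposite positive structure, or by inversion combined with the Chevalley-type involution exchanging $U_\Theta$ and $U_\Theta^{\opp}$.

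\textbf{Description of $G_\Theta^{\geq 0}$.} Set $S=U_\Theta^{\geq0}L_\Theta^\circ U_\Theta^{\opp\geq0}$. The commutation step shows that $S$ is stable under multiplication: in $SS$ the middle factor $U^{\opp}U$ can be swapped, and the Levi factors absorbed by conjugation invariance. Since $S$ contains the generators, $S=G_\Theta^{\geq0}$, and symmetrically $G_\Theta^{\geq0}=U_\Theta^{\opp\geq0}L_\Theta^\circ U_\Theta^{\geq0}$.

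\textbf{Closedness.} The multiplication map $U_\Theta\times L_\Theta\times U_\Theta^{\opp}\to G$ is a diffeomorphism onto the open big cell $\Omega$. Hence $S$ is closed in $\Omega$, because each factor is closed and $L_\Theta^\circ$ is a union of components of $L_\Theta$. To get closedness in $G$ we must rule out a sequence $u_n\ell_nv_n\to g\in\partial\Omega$. For this we use the second decomposition $g_n=v_n'\ell_n'u_n'$, and properness coming from a positivity bound. The expected argument is that $\partial\Omega$ is cut out by a matrix coefficient $\Delta(g)=\langle g e^+,e^-\rangle$ in a suitable representation. On $S$ this coefficient is bounded below by $|\Delta(\ell)|$, and in the $U^{\opp}LU$ description by something similar, since the positive cones make the relevant terms add without cancellation. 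Combining the two descriptions forces $\ell_n$, $u_n$, $v_n$ to stay bounded, so $g\in\Omega$. We expect this step to be the main obstacle. The first approach we would try is using the representation with highest weight $\sum_{\alpha\in\Theta}\varpi_\alpha$ and the positivity of the cones $c_\alpha$ to get the no-cancellation bound.

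\textbf{Interior and absorption.} $U^{>0}L^\circ U^{\opp>0}$ is open in $\Omega$, hence open in $G$, and it is contained in $S$. So it lies in $G_\Theta^{>0}$. Conversely, the interior of $S$ corresponds, via the diffeomorphism above, to $\operatorname{int}U^{\geq0}\times L^\circ\times\operatorname{int}U^{\opp\geq0}$, which by (1) gives equality. The equality for the other order is symmetric. Finally, $G^{\geq0}G^{>0}\subset G^{>0}$ follows from the commutation step with the positivity refinement in (5), together with (2): writing $s\cdot u\ell v$ and swapping the middle, positivity of $u$ and $v$ propagates.
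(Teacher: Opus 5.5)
Your route differs from the paper's. You first obtain the non-negative decomposition Lusztig-style from exchange relations, then read off $G_\Theta^{>0}$ as the interior through the big-cell diffeomorphism. The paper instead shows that $U_\Theta^{>0}L_\Theta^\circ U_\Theta^{\opp>0}$ is a connected component of $\Omega\cap\Omega^{\opp}$. It then uses only the $\SL_2(\R)$ identity of Lemma~\ref{lem_braid_sl2} to see that this component meets, hence equals, $U_\Theta^{\opp>0}L_\Theta^\circ U_\Theta^{>0}$, and obtains $G_\Theta^{\geq0}$ as its closure.

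The algebraic half of your plan is sound:
\begin{enumerate}
\item The word induction works, since $\exp(\mathfrak{u}_{-\beta})$ and $\exp(\mathfrak{u}_\alpha)$ commute for $\alpha\neq\beta$ in $\Theta$ and $L_\Theta^\circ$ preserves the cones.
\item The interior computation is correct.
\item $G_\Theta^{\geq0}G_\Theta^{>0}\subset G_\Theta^{>0}$ is immediate from the definition of $G_\Theta^{>0}$ as the interior of a semigroup, so the positivity refinement in (5) is not needed.
\end{enumerate}
Two corrections, though. First, for the special root the group generated by $\exp(\mathfrak{u}_{\pm\alpha})$ is not of $\SL_2$ or $\SO(1,n)$ type. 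It is of Hermitian tube type: all of $\Sp_{2n}(\R)$ when $\Theta=\{\alpha_n\}$, and of type $\SO(2,m)$ in the $\SO(p,q)$ family. The elementary exchange there, e.g.\ factoring $\bigl(\begin{smallmatrix}\Id&0\\N&\Id\end{smallmatrix}\bigr)\bigl(\begin{smallmatrix}\Id&M\\0&\Id\end{smallmatrix}\bigr)$ with $M,N$ positive semi-definite, is a symmetric-cone computation you still have to do family by family. This is exactly the general exchange relation the paper's argument avoids. Second, composing inversion with the Cartan involution maps each of your two product sets to itself rather than exchanging them, so use the mirror exchange relation for the reverse inclusion.

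The genuine gap is closedness of $G_\Theta^{\geq0}$, which the statement asserts and which you leave as an ``expected argument''. As formulated it gives no control. On $U_\Theta L_\Theta U_\Theta^{\opp}$ your coefficient is exactly a character of the Levi factor, $\Delta(u\ell v)=\chi(\ell)$. So ``bounded below by $|\Delta(\ell)|$'' is a tautology and says nothing about $\ell_n$ degenerating. The ``no cancellation'' bound for the other factorization is an unproved positivity of matrix coefficients, and there are no canonical bases for general $\Theta$ to supply it.

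The paper's missing idea is geometric. Take $s_n\to g$ with $s_n\in U_\Theta^{>0}L_\Theta^\circ U_\Theta^{\opp>0}$; the closure of this set contains $G_\Theta^{\geq0}$. The paper places $s_n\cdot p_\Theta$ and $s_n\cdot p'_\Theta$ in a fixed diamond $D(x,y)$ with $(x,s_n\cdot p_\Theta,s_n\cdot p'_\Theta,y)$ positive and $p_\Theta\in D(x,s_n\cdot p_\Theta)$. In the limit, $p_\Theta\in\overline{D}(x,g\cdot p_\Theta)$ while $g\cdot p'_\Theta\in D^\vee(x,g\cdot p_\Theta)$; this uses that $g\cdot p_\Theta$ and $g\cdot p'_\Theta$ stay transverse because $g$ is invertible. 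That forces $g\cdot p'_\Theta$ to be transverse to $p_\Theta$, so $g\in U_\Theta P_\Theta^{\opp}$ and the factors converge.

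You could also close the gap inside your own framework:
\begin{enumerate}
\item Choose the highest weight $\lambda$ invariant under the opposition involution, so that $\chi^-=(\chi^+)^{-1}$ on $L_\Theta^\circ$.
\item Each elementary exchange produces a Levi element $h$ with $\chi^-(h)\geq1$. It equals $(1+st)^m$ with $m\geq0$, or $\det(\Id+NM)$ in the symplectic case.
\item Tracking these through the word induction gives $\Delta^-(g)\,\Delta^+(g)\geq1$ on $G_\Theta^{\geq0}$, where $\Delta^\pm$ are the coefficients detecting the two big cells.
\item This closed condition keeps limits inside $U_\Theta L_\Theta U_\Theta^{\opp}$.
\end{enumerate}
As written, however, this step is missing.
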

\begin{remark}
  As the product map \(U_\Theta\times L_{\Theta}\times U_{\Theta}^{\opp}\to G\) is one-to-one, an immediate consequence of the theorem is that the products maps \(U_{\Theta}^{>0}\times L_{\Theta}^{\circ}\times U_{\Theta}^{\opp>0}\to G^{>0}\) and \(U_{\Theta}^{\geq 0}\times L_{\Theta}^{\circ}\times U_{\Theta}^{\opp \geq 0}\to G^{\geq 0}\) are homeomorphisms.
  \end{remark}

We refer the reader to Section~\ref{sec:pos_group} for the proofs and a few further statements about the structure of the semigroups $G_{\Theta}^{> 0}$ and $G_{\Theta}^{\geq 0}$. 

\subsection*{Properties of elements in the positive semigroup}
The second result establishes key geometric properties of elements in the positive semigroup $G_{\Theta}^{> 0}$. 
In \cite{LusztigPosRed} Lusztig used canonical bases (\ie special bases of the irreducible representations of~\(G\) in which the images of the Chevalley basis of~\(\mathfrak{g}\) are sent to matrices with non-negative entries) to establish several important properties for elements~$g$ in the positive semigroup $G_\Delta^{> 0}$. We prove analogous properties for elements $g$ in $G_{\Theta}^{> 0}$ for all subsets $\Theta$, using mainly the action on flag varieties and properties established in \cite{GW_pos}.

Let us recall that an element $g$ in $G$ is said to be proximal with respect to $\Theta$ if $g$ has a unique attracting fixed point in the flag variety $\mathsf{F}_\Theta = G/P_\Theta$ and a unique repelling fixed point in $\mathsf{F}_{\Theta}^{\opp} = G/P_{\Theta}^{\opp}$. When $G$ admits a positive structure relative to $\Theta$ then $P_\Theta$ is conjugate to $P_{\Theta}^{\opp}$, thus $\mathsf{F}_\Theta = \mathsf{F}_{\Theta}^{\opp}$. 
\begin{theorem}\label{thm:positive_elem_intro}
Every element $g \in G_{\Theta}^{>0}$ is proximal with respect to~$\Theta$.  Furthermore, $g$~is conjugate to an element in~$L^{\circ}_{\Theta}$. 
\end {theorem}
The proximality of an element $g \in G_{\Theta}^{>0}$ was essentially proved in \cite{GLW}, even though the semigroup $G_{\Theta}^{>0}$ is not mentioned there.
 Here we in fact  prove a stronger property. 
For this let us recall that the Cartan projection of the element $g$ in $G$ is the element $\mu(g)$ in the closed Weyl chamber in the Cartan subspace $\mathfrak{a}$, which is uniquely determined by $g = k_1\mu(g)k_2$. 

\begin{theorem}\label{thm:positive_elem2_intro}
Every element $g \in G_{\Theta}^{>0}$ is conjugate to an element $\ell$ $L^{\circ}_\Theta$ which satisfies $\alpha(\mu(l)) >0$ for all $\alpha \in \Theta$. 
\end{theorem}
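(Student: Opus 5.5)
The plan is to first locate $g$ inside $L_\Theta^{\circ}$ up to conjugacy, using the dynamics on $\mathsf{F}_\Theta$ given by Theorem~\ref{thm:positive_elem_intro}. Then I will read off the root inequalities from the contraction at the attracting fixed point. I interpret $\mu(\ell)$ as the projection attached to a suitably chosen conjugate $\ell$; the last step explains how to arrange this so that the Cartan projection is controlled.

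\emph{Step 1 (conjugating into $L_\Theta^{\circ}$).} By Theorem~\ref{thm:positive_elem_intro}, $g$ is $\Theta$-proximal. Let $x^+\in\mathsf{F}_\Theta$ be its attracting fixed point and $x^-\in\mathsf{F}_\Theta^{\opp}$ its repelling one. I will show that $x^+$ and $x^-$ are transverse. The argument uses Theorem~\ref{thm:positive_intro}: since $g\in G_\Theta^{\geq 0}\cdot G_\Theta^{>0}$, the element $g$ maps the closure of the positive open set $U_\Theta^{\opp>0}\cdot P_\Theta$ into that open set. Hence $x^+$ lies in the positive part, which is transverse to $P_\Theta^{\opp}$. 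Symmetrically, $g^{-1}$ maps the closure of $U_\Theta^{>0}\cdot P^{\opp}_\Theta$ into itself, so $x^-$ is transverse to $P_\Theta$. Positivity of the triple then forces transversality of $x^+$ and $x^-$. Choose $h$, taken inside $U_\Theta^{\geq 0}L_\Theta^{\circ}U_\Theta^{\opp\geq0}$ if possible, with $h\cdot P_\Theta=x^+$ and $h\cdot P_\Theta^{\opp}=x^-$. Then $\ell:=h^{-1}gh$ fixes both $P_\Theta$ and $P_\Theta^{\opp}$, so it lies in $P_\Theta\cap P_\Theta^{\opp}=L_\Theta$. To land in $L_\Theta^{\circ}$ rather than merely $L_\Theta$, I will use that $G_\Theta^{>0}$ is connected (it is the homeomorphic image of $U_\Theta^{>0}\times L_\Theta^{\circ}\times U_\Theta^{\opp>0}$) and contains elements of $L_\Theta^{\circ}$, for instance $\exp$ of elements of $\mathfrak a$ that are strictly $\Theta$-dominant times positive unipotents. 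The component of $L_\Theta$ containing $\ell$ then depends continuously on $g$ and is constant. I will make this precise by choosing $h$ continuously in $g$ on $G^{>0}_\Theta$.

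\emph{Step 2 (contraction gives $\Theta$-regularity).} The tangent space $T_{P_\Theta}\mathsf{F}_\Theta$ is identified with $\mathfrak u_\Theta^{\opp}=\bigoplus_{\beta}\mathfrak g_{-\beta}$, and $\ell$ acts on it by $\Ad(\ell)$. Since $P_\Theta$ is the attracting fixed point of $\ell$, every eigenvalue of $\Ad(\ell)$ on $\mathfrak u_\Theta^{\opp}$ has modulus $<1$. Writing $\ell=e\,a\,u$ (Jordan decomposition inside $L_\Theta^{\circ}$, with $a=\exp(\lambda)$, $\lambda\in\mathfrak a^+$), this gives $\alpha(\lambda)>0$ for all $\alpha\in\Theta$. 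It suffices to test the lowest weight in each $\mathfrak u_{-\alpha}$, $\alpha\in\Theta$, which the Levi $L_\Theta^{\circ}$ preserves.

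\emph{Step 3 (from Jordan to Cartan, the main obstacle).} The Cartan projection is not conjugation invariant, so I still have to choose $\ell$ within its $L_\Theta^{\circ}$-conjugacy class so that $\alpha(\mu(\ell))>0$ for $\alpha\in\Theta$. Here I will exploit the positive structure. For $\alpha\in\Theta$, $L_\Theta^{\circ}$ preserves an acute convex cone $c_\alpha\subset\mathfrak u_{-\alpha}$. Moreover $\ell$ maps $U_\Theta^{\opp\geq0}\smallsetminus\{1\}$ into $U_\Theta^{\opp>0}$ (restricted to the attracting chart), which should give that $\Ad(\ell)$ maps $\overline{c_\alpha}\smallsetminus\{0\}$ strictly into a smaller cone. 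My first attempt is to use the Hilbert metric on $c_\alpha$, together with a $K\cap L_\Theta$-invariant norm adapted to the cone, to obtain an operator-norm bound $\|\Ad(\ell^{-1})|_{\mathfrak u_{-\alpha}}\|^{-1}>1$. Conjugating $\ell$ by an element of $L_\Theta^{\circ}$ that puts its Levi Cartan decomposition in standard form, this norm equals $e^{\alpha(\mu(\ell))}$ up to the lowest-weight normalization. I expect the delicate point to be exactly this step, namely turning strict cone contraction into a strict norm inequality for the specific $K$-invariant norm. If it fails, the fallback is to replace $\ell$ by a conjugate close to its semisimple part $a$, where $\mu$ and the Jordan projection agree, and apply Step~2.
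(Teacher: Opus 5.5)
With your fallback in Step~3, the proposal is a valid proof by a route different from the paper's. However, the Hilbert-metric argument you put first cannot work, and Step~1 has some errors that need repair.

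\textbf{The primary Step~3 argument.} The cones are $L_\Theta^\circ$-invariant by the very definition of a positive structure. So for $\ell\in L_\Theta^\circ$ one has $\Ad(\ell)c_\alpha=c_\alpha$ (likewise for $c_\alpha^{\opp}$) and $\ell\, U_\Theta^{\opp\geq 0}\ell^{-1}=U_\Theta^{\opp\geq 0}$. Hence $\Ad(\ell)$ is an isometry of the Hilbert metric, and there is no strict cone contraction to exploit.

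The positivity actually available is an affine inequality at a base point, and this is what the paper uses:
\begin{itemize}
\item Conjugating the positive quadruple $(g^-,p_\Theta,g\cdot p_\Theta,g^+)$ to $(p'_\Theta,w\cdot p'_\Theta,\ell w\cdot p'_\Theta,p_\Theta)$ with $w\in U_\Theta^{>0}$ gives $\ell w\ell^{-1}=vw$ with $v\in U_\Theta^{>0}$.
\item Additivity of the degree-one projection then yields $\ell\cdot[w]_1-[w]_1=[v]_1\in\bigoplus_{\alpha\in\Theta}\mathring{c}_\alpha$.
\item Moving $[w]_1$ to $(E_\alpha)_{\alpha\in\Theta}$ by $L_\Theta^\circ\cap U_\Delta$ puts the conjugate in $L_\Theta^{*}$.
\item The explicit form of the $E_\alpha$, including the quadrant description for special roots, turns this into $\alpha(X_\ell)>0$.
\end{itemize}

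\textbf{Repairs to Step~1.}
\begin{itemize}
\item $G_\Theta^{>0}$ contains no element of $L_\Theta$: the product map $U_\Theta\times L_\Theta\times U_\Theta^{\opp}\to G$ is injective and the identity is not in $U_\Theta^{>0}$. The base point for your connectedness argument must be something like $\exp(t(E+F))$ with $t>0$. In the $\Theta$-principal $\SL_2$ it factors as $x(\tanh t)\,\check{\chi}(1/\cosh t)\,y(\tanh t)$, so it lies in $G_\Theta^{>0}$. It is conjugate inside that subgroup to $\exp(tD)\in L_\Theta^\circ$, whose attracting point is $p_\Theta$.
\item You need no global continuous choice of $h$. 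Whether $h^{-1}gh\in L_\Theta^\circ$ is independent of $h$: replacing $h$ by $hm$ with $m\in L_\Theta$ conjugates $\ell$ by $m$, and $L_\Theta^\circ$ is normal in $L_\Theta$. So local sections suffice.
\item Your transversality argument for $x^\pm$ is wrong as written: $U_\Theta^{>0}\cdot p'_\Theta$ is again $D_\Theta$, and $g^{-1}\notin G_\Theta^{>0}$. It is also unnecessary, since the fixed points of a proximal element are always transverse.
\end{itemize}

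\textbf{The fallback, made precise.}
\begin{itemize}
\item Write $\ell=\ell_e\ell_h\ell_u$ (Jordan decomposition). Conjugate in $L_\Theta^\circ$ so that $\ell_h=\exp(\lambda)$ with $\lambda\in\bar{\mathfrak{a}}_\Theta^+$, and then, inside the centralizer of $\ell_h$, so that $\ell_e\in K_\Theta$.
\item Then $\ell_e\ell_h$ has Cartan projection $\lambda$, and your Step~2 gives $\alpha(\lambda)>0$ for all $\alpha\in\Theta$.
\item Take a Jacobson--Morozov element $H$ for $\log\ell_u$ in the reductive algebra $\mathfrak{z}_{\mathfrak{l}_\Theta}(\ell_e\ell_h)$. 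It gives conjugates $\exp(-sH)\,\ell\,\exp(sH)=\ell_e\ell_h\exp(e^{-2s}\log\ell_u)$, which tend to $\ell_e\ell_h$.
\item Continuity of the Cartan projection and openness of the condition $\alpha(X)>0$ for all $\alpha\in\Theta$ finish the proof.
\end{itemize}
Note that these conjugates approach the semisimple part $\ell_e\ell_h$ (your $ea$), not the hyperbolic part $a$.

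\textbf{Comparison with the paper.} Your route confines the positivity input to proximality and connectedness of $G_\Theta^{>0}$. It shows that the Cartan statement then follows from reductive-group theory alone. But it takes proximality from Theorem~\ref{thm:positive_elem_intro}, whose proof in the paper already runs through the degree-one argument (Lemma~\ref{lemma_pos_4uple_Lstar} and Proposition~\ref{prop_Lthetastar_prox}). It also only gives a non-explicit conjugate. The paper extracts everything directly from the positive quadruple. It obtains a canonical conjugate $\ell_g$, continuous in $g$ and lying in the fixed semigroup $L_\Theta^{*}$ (Proposition~\ref{prop:posprox}).
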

This is indeed a stronger statement. For example when $G = \Sp_{2n}(\R)$ and $\Theta = \{\alpha_n\}$, Theorem~\ref{thm:positive_elem_intro} only states that an element  $g \in G_{\Delta}^{>0}$ is conjugate to an element of $\GL_{n}(\R)$ of positive determinant. Whereas Theorem~\ref{thm:positive_elem2_intro} states that the singular values of that element are all bigger than one. 
In the split real case $\Theta = \Delta$, Theorem~\ref{thm:positive_elem2_intro} states 
that the element $g \in G_{\Delta}^{>0}$ is conjugate to an element in $A^\circ$ whose logarithm lies in the interior of the Weyl chamber, which means that all the entries are positive and distinct, giving strong regularity properties. When $G= \SL_n(\R)$ this is precisely the fact that a totally positive matrix in $\SL_n(\R)$ is diagonalizable with distinct positive eigenvalues. 

In the case of total positivity in split real Lie groups Theorem~\ref{thm:positive_elem2_intro} was proven in  \cite{LusztigPosRed}. Our general proof gives a new proof in the split real case which only uses the action of $G$ and $G_{\Theta}^{>0}$ on the flag variety~$\mathsf{F}_{\Theta}$. 
A crucial point that enables this approach is that in \cite{GW_pos} we do not only introduce the positive unipotent semigroup $U_\Theta^{>0}$, but we equally focus on the general notions of positive triple of flags and positive quadruples of flags. This allows us to investigate the action of~$G$ on these spaces of positive $n$-tuples of flags. In the split real case, the appropriate notions of positive configurations of $n$-tuples have been introduced in \cite{Fock_Goncharov}.  

\subsection*{Positive triples and quadruples of flags} 
The positive semigroup $U_\Theta^{>0}$ is  closely linked with the notion of positive triples, positive quadruples and more generally positive n-tuples in the flag variety $G/P_\Theta$, see \cite{GW_pos}, which is useful in applications of positivity, see \eg \cite{GLW, BGLPW, FTWZ}. 

The results on the positive semigroup $G_\Theta^{>0}$ have immediate corollaries for the positivity of quadruples. 
\begin{corollary}\label{cor:positive_intro}
Let $p_\Theta, {p}_{\Theta}^{\prime}$ be the points in~\(\mathsf{F}_{\Theta}\) corresponding to $P_\Theta$ and $P_\Theta^{\opp}$ respectively. Let~$g$ be an element in the positive semigroup $G_\Theta^{>0}$ and denote~\(g^+\), \(g^-\) its attracting and repelling fixed points in~\(\mathsf{F}_{\Theta}\). Then: 
\begin{enumerate}
\item The triples $({p}_\Theta, g\cdot {p}_\Theta, {p}_{\Theta}^{\prime})$ and $({p}_\Theta, g\cdot {p}_{\Theta}^{\prime}, {p}_{\Theta}^{\prime})$
 are positive. 
\item The quadruple $({p}_\Theta, g\cdot {p}_\Theta, {p}_{\Theta}^{\prime}, g^{-1}\cdot {p}^{\prime}_{\Theta})$ is positive. 
\item The quadruple \(({p}_\Theta, g\cdot {p}_\Theta, g\cdot{p}_{\Theta}^{\prime}, {p}^{\prime}_{\Theta})\) is positive. 
 \item\label{cor:item4} The quadruples $(g^-, {p}_{\Theta},g\cdot {p}_{\Theta},g^+)$ and $(g^-, {p}^{\prime}_{\Theta}, g\cdot {p}^{\prime}_{\Theta},g^+)$ are positive.
\end{enumerate}
\end{corollary}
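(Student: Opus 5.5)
Each item will be derived from Theorem~\ref{thm:positive_intro} together with the characterization of positive triples and quadruples from \cite{GW_pos}. The facts from \cite{GW_pos} I will use are these. A triple $(p_\Theta, x, p'_\Theta)$ is positive if and only if $x = u\cdot p'_\Theta$ with $u\in U_\Theta^{>0}$, equivalently $x = v\cdot p_\Theta$ with $v\in U_\Theta^{\opp>0}$. A quadruple $(p_\Theta, u\cdot p'_\Theta, p'_\Theta, v\cdot p_\Theta)$ with $u\in U_\Theta^{>0}$ and $v \in U_\Theta^{\opp >0}$ is positive, and positivity is invariant under the action of $G$ (and of $L_\Theta^\circ$, which fixes $p_\Theta$ and $p'_\Theta$) and under the dihedral symmetries of the cyclic order.

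For (1), write $g = v\,\ell\,u$ with $v\in U_\Theta^{\opp>0}$, $\ell\in L_\Theta^\circ$, $u\in U_\Theta^{>0}$, using the second decomposition of Theorem~\ref{thm:positive_intro}. Since $u$ and $\ell$ fix $p_\Theta$, we get $g\cdot p_\Theta = v\cdot p_\Theta$, so the first triple is positive. Using instead $g = u'\ell' v'$ from the first decomposition gives $g\cdot p'_\Theta = u'\cdot p'_\Theta$, so the second triple is positive.

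For (2), write $g^{-1}\cdot p'_\Theta = (u')^{-1}\cdots$. It is cleaner to note that $g^{-1}\cdot p'_\Theta = g^{-1}u'\cdot p'_\Theta$, which has the form $w\cdot p'_\Theta$ with $w \in U_\Theta^{>0}$ inverted. I would instead apply $g^{-1}$ to the whole quadruple. It becomes $(g^{-1}p_\Theta, p_\Theta, g^{-1}p'_\Theta, p'_\Theta)$, which I would rather rotate. A better route is to use $G_\Theta^{\geq0}G_\Theta^{>0}\subset G_\Theta^{>0}$: elements of $U_\Theta^{\geq 0}$ acting on a positive configuration keep the relevant triples positive.

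Concretely, with $g = v\ell u$, set $x = g\cdot p_\Theta = v\cdot p_\Theta$ and $y = g^{-1}\cdot p'_\Theta = u^{-1}\ell^{-1} v^{-1}\cdot p'_\Theta$. Here $\ell^{-1}v^{-1}\cdot p'_\Theta$ and $u^{-1}\cdot p'_\Theta$ lie on the "opposite side". Positivity of the quadruple $(p_\Theta, x, p'_\Theta, y)$ reduces, after $L^\circ_\Theta$-normalization, to checking that $x\in U^{\opp>0}_\Theta p_\Theta$ and $y \in (U^{>0}_\Theta)^{-1} p'_\Theta$ up to the quadruple criterion. The main obstacle is getting the sign conventions of the quadruple criterion right. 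To handle it I would reduce (2) to (3) by applying $g^{-1}$ and using cyclic invariance.

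For (3), I need $(p_\Theta, g p_\Theta, g p'_\Theta, p'_\Theta)$ positive. Write $g=v\ell u$ with $v\in U_\Theta^{\opp>0}$. Then $(p_\Theta, g p_\Theta, g p'_\Theta, p'_\Theta) = v\cdot(p_\Theta, p_\Theta\text{?})\ldots$, which does not work directly since $v$ moves $p'_\Theta$. Instead I use diamonds. The triple $(p_\Theta, gp_\Theta, p'_\Theta)$ is positive by (1), and the triple $(p_\Theta, gp'_\Theta, p'_\Theta)$ is positive by (1). It remains to show that $(gp_\Theta, gp'_\Theta, p'_\Theta)$ is positive, equivalently $(p_\Theta, p'_\Theta, g^{-1}p'_\Theta)$ is positive, which is the triple for $g^{-1}$, i.e. the opposite semigroup. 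I also need the mutual ordering: $g p'_\Theta$ lies in the diamond between $g p_\Theta$ and $p'_\Theta$. This is the same as $\ell u\cdot p'_\Theta$ lying in $U^{>0}$-translates, which follows from the semigroup property $U^{>0}U^{>0}\subset U^{>0}$.

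For (4), proximality (Theorem~\ref{thm:positive_elem_intro}) gives $g^\pm = \lim g^{\pm n}\cdot p_\Theta$. Iterating (3) and using the semigroup property shows that $(p_\Theta, g p_\Theta, g^n p_\Theta, g^{-m}p_\Theta)$ is positive for all $n,m\geq 2$. Passing to the limit gives non-negativity. Strict positivity then follows from the fact that the limit points are transverse to $p_\Theta$ and $g p_\Theta$, since $g^\pm$ are the attracting and repelling points, together with the fact that the positive quadruples form a union of connected components of the transverse quadruples.
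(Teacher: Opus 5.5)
Your item (1) is correct and matches the paper: it is Theorem~\ref{thm:positive}(\ref{item1_thm:positiie}) combined with $S=G_\Theta^{>0}$. Items (2) and (3), however, are never actually established.

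The obstacle you ran into has two sources.
\begin{enumerate}
\item A stabilizer mix-up. The stabilizer of $p'_\Theta$ is $P_\Theta^{\opp}=L_\Theta U_\Theta^{\opp}$, so $v\in U_\Theta^{\opp>0}$ \emph{fixes} $p'_\Theta$ and moves $p_\Theta$. This is the opposite of what you write in (3).
\item A wrong quadruple criterion. Your ``fact'' that $(p_\Theta,u\cdot p'_\Theta,p'_\Theta,v\cdot p_\Theta)$ is positive for $u\in U_\Theta^{>0}$, $v\in U_\Theta^{\opp>0}$ is false. Both $u\cdot p'_\Theta$ and $v\cdot p_\Theta$ lie in $D_\Theta=U_\Theta^{>0}\cdot p'_\Theta=U_\Theta^{\opp>0}\cdot p_\Theta$. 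The correct criterion is the one of Section~\ref{sec_diamonds}: $(p_\Theta,x,p'_\Theta,y)$ is positive when $x\in D_\Theta$ and $y\in D_\Theta^\vee=(U_\Theta^{>0})^{-1}\cdot p'_\Theta$.
\end{enumerate}
With these corrections both items take one line.
\begin{itemize}
\item For (2), write $g=v\ell u$. Then $g\cdot p_\Theta=v\cdot p_\Theta\in D_\Theta$, and $g^{-1}\cdot p'_\Theta=u^{-1}\ell^{-1}v^{-1}\cdot p'_\Theta=u^{-1}\cdot p'_\Theta\in D_\Theta^\vee$.
\item For (3), write $g=u\ell v$ instead. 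Since $\ell v\cdot p_\Theta\in D_\Theta$, there is $w\in U_\Theta^{>0}$ with $\ell v\cdot p_\Theta=w\cdot p'_\Theta$. Hence $g\cdot p_\Theta=uw\cdot p'_\Theta$ and $g\cdot p'_\Theta=u\cdot p'_\Theta$. The element $u^{-1}$ fixes $p_\Theta$ and carries the quadruple to $(p_\Theta,w\cdot p'_\Theta,p'_\Theta,u^{-1}\cdot p'_\Theta)$, which is positive.
\end{itemize}
Two of your proposed steps do not work as stated. Reducing (2) to (3) ``by applying $g^{-1}$'' yields $(g^{-1}\cdot p_\Theta,p_\Theta,p'_\Theta,g^{-1}\cdot p'_\Theta)$, which is not a rearrangement of (2). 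Checking sub-triples in (3) cannot capture the relative position that positivity of a quadruple encodes.

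The real gap is in (4). What must be shown is that $g^+$ lies in the \emph{open} diamond $D_\Theta$ and $g^-$ in the open diamond $D_\Theta^\vee$. A limit of points of $D_\Theta$ can lie on $\partial\overline{D}_\Theta$, where transversality to $p_\Theta$ or $p'_\Theta$ fails (compare Corollary~\ref{cor:conncomp}). Your justification ``since $g^\pm$ are the attracting and repelling points'' gives no transversality with $p_\Theta$ or $g\cdot p_\Theta$. Even the identity $g^+=\lim g^n\cdot p_\Theta$ already presupposes that $p_\Theta$ is transverse to $g^-$. The paper obtains exactly this strictness from \cite[Proposition~3.14]{GLW} in Lemma~\ref{lemma_iterates_CV}. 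It then proves proximality \emph{using} that lemma, so quoting Theorem~\ref{thm:positive_elem_intro} here reverses the paper's logic.

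If you want to keep your route, you can extract the strictness from (3) as follows.
\begin{itemize}
\item Since $g\cdot D_\Theta\subset D_\Theta$ (Lemma~\ref{lem:pos_diamond} and the semigroup property), (3) and the third fact of Section~\ref{sec_diamonds} give $g\cdot\overline{D}_\Theta\subset D_\Theta$.
\item Transversality to $g^-$ is an open dense condition, so some $x\in D_\Theta$ is transverse to $g^-$. For it, $g^+=\lim g^n\cdot x\in g\cdot\overline{D}_\Theta\subset D_\Theta$.
\item The same argument applies to $g^{-1}$, which is positive for the cones $-c_\alpha$, whose standard diamond is $D_\Theta^\vee$. This gives $g^-\in D_\Theta^\vee$.
\end{itemize}
Only after this is the limiting quadruple pairwise transverse, so that your closedness argument can apply. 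Finally, you never treat the second quadruple $(g^-,p'_\Theta,g\cdot p'_\Theta,g^+)$. It needs the analogous argument with $p'_\Theta$ in place of $p_\Theta$.
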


%
  
In Section~\ref{sec:crossratio} we give a description of a cross ratio like function on positive quadruples which might be of independent interest.

\subsection*{Positive and Non-negative parts of flag varieties}
The positive and the non-negative semigroups $G_{\Theta}^{>0}$ and $G_{\Theta}^{\geq 0}$ allow us to introduce the corresponding positive and non-negative parts of flag varieties.

For an arbitrary subset  $\Theta' \subset \Delta$ let us consider the flag variety $\mathsf{F}_{\Theta'}= G/P_{\Theta'}$. We denote by~$p_{\Theta'}$ the base point corresponding to~$P_{\Theta'}$. The positive part $\mathsf{F}_{\Theta'} ^{>0}$ is the $G_{\Theta}^{>0}$-orbit of $p_{\Theta'}$, its negative part $\mathsf{F}_{\Theta'} ^{\geq 0}$ is the closure of $\mathsf{F}_{\Theta'}^{>0}$ in  $\mathsf{F}_{\Theta'}$.

When $\Theta' = \Theta$ these subsets have been described already in \cite{GW_pos} and used heavily in \cite{GLW}, where they are called diamonds. 
\begin{theorem}\label{thm:diamond_intro}
The positive part $\mathsf{F}_\Theta^{>0}$ is the standard diamond $D_\Theta$ with extremities $P_\Theta$ and $P_{\Theta}^{\opp}$. 
The non-negative part $\mathsf{F}_\Theta^{\geq 0}$ is the closure $\overline{D}_{\Theta}$ of the diamond $D_\Theta$.
\end{theorem}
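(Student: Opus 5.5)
We use Theorem~\ref{thm:positive_intro}, which gives $G_\Theta^{>0} = U_\Theta^{\opp>0}\cdot L_\Theta^\circ\cdot U_\Theta^{>0}$. Recall from \cite{GW_pos} that the standard diamond $D_\Theta$ with extremities $p_\Theta$ and $p'_\Theta$ is the set $U_\Theta^{\opp>0}\cdot p_\Theta$. Equivalently, it is $U_\Theta^{>0}\cdot p'_\Theta$, namely the set of $x$ with $(p_\Theta,x,p'_\Theta)$ positive. It is a connected component of the set of flags transverse to both $p_\Theta$ and $p'_\Theta$.

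\textbf{Positive part.} We compute the orbit $G_\Theta^{>0}\cdot p_\Theta$ using the second factorization. Since $P_\Theta = L_\Theta U_\Theta$ stabilizes $p_\Theta$, we get
\[
G_\Theta^{>0}\cdot p_\Theta = U_\Theta^{\opp>0} L_\Theta^\circ U_\Theta^{>0}\cdot p_\Theta = U_\Theta^{\opp>0}\cdot p_\Theta = D_\Theta .
\]
This is immediate once the factorization and the description of $D_\Theta$ as $U_\Theta^{\opp>0}\cdot p_\Theta$ are available. If instead one takes $D_\Theta$ to be the connected component of the transverse set, we need the extra fact, from \cite{GW_pos}, that $U^{\opp>0}_\Theta\to \mathsf{F}_\Theta$, $u\mapsto u\cdot p_\Theta$, is a homeomorphism onto that component. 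This holds because $U_\Theta^{\opp}$ acts simply transitively on flags transverse to $p'_\Theta$, and $U_\Theta^{\opp>0}$ is a connected component of the set of $u$ with $u\cdot p_\Theta$ transverse to $p_\Theta$.

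\textbf{Non-negative part.} By definition, $\mathsf{F}_\Theta^{\geq0}$ is the closure of $\mathsf{F}_\Theta^{>0}=D_\Theta$, so it equals $\overline D_\Theta$ tautologically. For a more informative statement, we also show $\overline D_\Theta = G_\Theta^{\geq0}\cdot p_\Theta = U_\Theta^{\opp\geq0}\cdot p_\Theta\cup\{\text{limit points}\}$, and we do this in two steps.

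\emph{First inclusion.} Since $G_\Theta^{\geq 0}$ is the closure of $G_\Theta^{>0}$ (it is closed by Theorem~\ref{thm:positive_intro} and has interior $G_\Theta^{>0}$), continuity of the orbit map gives $G_\Theta^{\geq0}\cdot p_\Theta\subset \overline{D}_\Theta$.

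\emph{Reverse inclusion.} Here we cannot argue from compactness of $G_\Theta^{\geq0}$, which is noncompact. Instead we note that $D_\Theta=U_\Theta^{\opp>0}\cdot p_\Theta$ and that the closure includes points not transverse to $p'_\Theta$, such as $p'_\Theta$ itself. So the set $U_\Theta^{\opp\geq0}\cdot p_\Theta$ alone is not closed, and the honest statement is just $\overline D_\Theta$.

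The main obstacle is therefore only the identification of $D_\Theta$ with $U_\Theta^{\opp>0}\cdot p_\Theta$ from \cite{GW_pos}. We would quote that identification directly rather than reprove it.
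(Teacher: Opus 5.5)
Your argument is correct and is the paper's own: $\mathsf{F}_\Theta^{>0}=D_\Theta$ follows from $G_\Theta^{>0}=U_\Theta^{\opp>0}L_\Theta^{\circ}U_\Theta^{>0}$, the fact that $L_\Theta^{\circ}U_\Theta^{>0}$ stabilizes $p_\Theta$, and $D_\Theta=U_\Theta^{\opp>0}\cdot p_\Theta$; the equality $\mathsf{F}_\Theta^{\geq 0}=\overline{D}_\Theta$ then holds by definition. You should delete the ``more informative'' claim $\overline{D}_\Theta=G_\Theta^{\geq 0}\cdot p_\Theta$, because it is false: $G_\Theta^{\geq 0}\cdot p_\Theta=U_\Theta^{\opp\geq 0}\cdot p_\Theta$ consists only of points transverse to $p'_\Theta$ (since $U_\Theta^{\opp}$ fixes $p'_\Theta$), so it misses $p'_\Theta\in\overline{D}_\Theta$ --- as your own ``reverse inclusion'' paragraph effectively observes --- and only the inclusion $G_\Theta^{\geq 0}\cdot p_\Theta\subset\overline{D}_\Theta$ survives.
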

The proof of Theorem~\ref{thm:positive_elem_intro} in fact follows easily from properties of diamonds established in \cite{GLW}. 

In order to describe the positive and non-negative part of general flag varieties, let us recall that, for any subsets \(\Upsilon\subset \Xi\) of~\(\Delta\), there is a natural projection 
\[\pi_{\Upsilon}^{\Xi}\colon  \mathsf{F}_{\Xi} \rightarrow \mathsf{F}_\Upsilon.\]

\begin{theorem}\label{thm:pospart_intro} 
Let $\mathsf{F}_{\Theta'}^{>0}$ be the positive part of the flag variety $\mathsf{F}_{\Theta'}$, and $\mathsf{F}_{\Theta'}^{\geq 0}$ the non-negative part. 
Then 
\begin{enumerate}[leftmargin=*]
\item\label{item1_thm:pospart_intro} If $\Theta'$~is contained in~$ \Theta$, then $\mathsf{F}_{\Theta'}^{>0} = \pi^{\Theta}_{\Theta'} (D_\Theta)$, and $\mathsf{F}_{\Theta'}^{\geq 0} = \pi^{\Theta}_{\Theta'} (\overline{D}_{\Theta})$.
In this case, the non-negative part $\mathsf{F}_{\Theta'}^{\geq 0}$ is homeomorphic to a closed ball. 
\item If \(\Theta'\) contains~$\Theta$, then $\mathsf{F}_{\Theta'}^{>0} = (\pi^{\Theta'})^{-1}_{\Theta} (D_\Theta)$, and $\mathsf{F}_{\Theta'}^{\geq 0} = {\pi^{\Theta'}}^{-1}_{\Theta} (\overline{D}_{\Theta})$.
In this case, the non-negative part $\mathsf{F}_{\Theta'}^{\geq 0}$ is homeomorphic to a compact fiber bundle over a  closed ball with fiber $K_{\Theta}/K_{\Theta'}$, where $K_\Theta$ and $K_{\Theta'}$ are the maximal compact subgroups of $L_\Theta$ and $L_{\Theta' }$ respectively.  In particular, when \(\Theta \subsetneq \Theta'\), $\mathsf{F}_{\Theta'}^{\geq 0}$ is topologically non-trivial. 
\item In general, one has the equalities $\mathsf{F}_{\Theta'}^{>0} = \pi^{\Xi}_{\Theta'} ((\pi^{\Xi}_{\Theta})^{-1}(D_\Theta))$, and $\mathsf{F}_{\Theta'}^{\geq 0} = \pi^{\Xi}_{\Theta'} ((\pi^{\Xi}_{\Theta})^{-1}(\overline{D}_{\Theta}))$, 
where $\Xi = \Theta \cup \Theta'$.
\end{enumerate}
\end{theorem}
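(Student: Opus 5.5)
The plan is to derive everything from the factorization of Theorem~\ref{thm:positive_intro}, $G_\Theta^{>0}=U_\Theta^{\opp>0}\cdot L_\Theta^\circ\cdot U_\Theta^{>0}$, together with Theorem~\ref{thm:diamond_intro}. The first step is item (3), of which (1) and (2) are special cases. Set $\Xi=\Theta\cup\Theta'$. Since $P_\Xi\subset P_{\Theta'}$, we have $\mathsf{F}^{>0}_{\Theta'}=G^{>0}_\Theta\cdot p_{\Theta'}=\pi^\Xi_{\Theta'}(G^{>0}_\Theta\cdot p_\Xi)$. So it suffices to show $G^{>0}_\Theta\cdot p_\Xi=(\pi^\Xi_\Theta)^{-1}(D_\Theta)$, i.e.\ item (2) for $\Xi\supset\Theta$.

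For item (2), write $g=v\ell u$ with $v\in U_\Theta^{\opp>0}$, $\ell\in L_\Theta^\circ$, $u\in U_\Theta^{>0}$. Since $U_\Theta\subset P_\Theta$, the orbit is $G^{>0}_\Theta\cdot p_{\Theta'}=U^{\opp>0}_\Theta L^\circ_\Theta\cdot p_{\Theta'}$ (using the first factorization order appropriately, $U_\Theta\subset P_{\Theta'}$ when $\Theta\subset\Theta'$). Its image in $\mathsf{F}_\Theta$ is $U^{\opp>0}_\Theta\cdot p_\Theta$, which is $D_\Theta$ by Theorem~\ref{thm:diamond_intro} and \cite{GW_pos}. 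The fiber of $\pi^{\Theta'}_\Theta$ over $v\cdot p_\Theta$ is $vP_\Theta/P_{\Theta'}=vL_\Theta/(L_\Theta\cap P_{\Theta'})$. It is acted on transitively by $vL^\circ_\Theta$, because $L_\Theta^\circ$ acts transitively on $L_\Theta/(L_\Theta\cap P_{\Theta'})$: this quotient is the connected flag variety of the reductive group $L_\Theta$, and $K_\Theta^\circ$ already acts transitively. Hence the orbit is the full preimage. For closures, $\pi^{\Theta'}_\Theta$ is proper and open, so $\overline{\pi^{-1}(D_\Theta)}=\pi^{-1}(\overline{D}_\Theta)$. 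For item (3), $\pi^\Xi_{\Theta'}$ is a closed map, so it commutes with closure, giving $\mathsf{F}^{\geq0}_{\Theta'}=\pi^\Xi_{\Theta'}(\pi^{-1}(\overline{D}_\Theta))$. Item (1) is the case $\Xi=\Theta$.

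Next come the topological claims. For (2), the bundle $\mathsf{F}_{\Theta'}\to\mathsf{F}_\Theta$ has fiber $L_\Theta/(L_\Theta\cap P_{\Theta'})\cong K_\Theta/K_{\Theta'}$. Its restriction to $\overline{D}_\Theta$ is a compact fiber bundle, and it is trivial because the base will be a ball. Nontriviality when $\Theta\subsetneq\Theta'$ follows since $K_\Theta/K_{\Theta'}$ is a positive-dimensional compact manifold, so it has nonzero top homology mod~2.

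The main obstacle is showing that $\overline{D}_\Theta$, and its images under $\pi^\Theta_{\Theta'}$ for $\Theta'\subset\Theta$, are closed balls. For $\overline{D}_\Theta$ I would use the identification $D_\Theta\cong U^{\opp>0}_\Theta$, an open convex-like cone in $\mathfrak{u}_\Theta^{\opp}$ via the exponential/Jordan-algebra picture from \cite{GW_pos}. I would take a contracting flow: choose $a\in\exp\mathfrak{a}$ in the center of $L_\Theta^\circ$ with $\alpha(\log a)<0$ on $\Theta$. Then $a^t$ preserves $\overline{D}_\Theta$, contracts it to $p^{\prime}_\Theta$ as $t\to\infty$, and by proximality (Theorem~\ref{thm:positive_elem_intro}) its orbits cross a small sphere around the repelling point transversally once. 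This yields a radial structure, and hence a homeomorphism with a cone over a compact sphere-like link. For $\Theta'\subset\Theta$, the same element $a$ acts on $\mathsf{F}_{\Theta'}$ with attracting point $\pi(p_\Theta)$. I would run the same radial argument on $\pi(\overline{D}_\Theta)$, which is $a$-invariant, compact, contractible to that point, and star-shaped along flow lines. Checking that each flow line meets the boundary exactly once is the delicate point. I would try to verify it using the semigroup property $G^{\geq0}_\Theta G^{>0}_\Theta\subset G^{>0}_\Theta$, which should show that interior points flow strictly inward.
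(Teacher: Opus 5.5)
Your derivation of the set-theoretic equalities in (1)--(3) is sound and is essentially the paper's argument. It combines the factorization $G_\Theta^{>0}=U_\Theta^{\opp>0}L_\Theta^\circ U_\Theta^{>0}$, transitivity of $L_\Theta^\circ$ on the fibres of $\pi^\Xi_\Theta$, openness of $\pi$ for preimages of closures, and closedness of $\pi$ for images of closures. The bundle and non-triviality claims in (2) also follow once $\overline{D}_\Theta$ is known to be a closed ball.

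The gap is in that ball statement itself, for $\overline{D}_\Theta$ and for $\pi^\Theta_{\Theta'}(\overline{D}_\Theta)$, and every topological claim rests on it. The flow $a^t$ you propose cannot give a cone structure. Since $a\in L_\Theta^\circ$, it preserves $D_\Theta$ and $\overline{D}_\Theta$, hence also $\overline{D}_\Theta\smallsetminus D_\Theta$. An orbit through an interior point therefore never reaches the boundary, and boundary orbits stay in the boundary. So ``each flow line meets the boundary exactly once'' is not a delicate point; it is false. Nor does $a^t$ contract $\overline{D}_\Theta$ to $p'_\Theta$. Its repelling point $p_\Theta$ lies in $\overline{D}_\Theta$, and $\overline{D}_\Theta$ contains further $a$-fixed points that are not transverse to $p_\Theta$. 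For example, for $\Sp_{2n}(\R)$ with $\Theta=\{\alpha_n\}$, every Lagrangian $V\oplus(V^{\perp_\omega}\cap L_\infty)$ with $V\subset L_0$ is $a$-fixed and is a limit of graphs of positive definite maps. For split $G$ with $\Theta=\Delta$, all the points $\dot w\cdot p_\Delta$, $w\in W$, lie in $\overline{D}_\Delta$. Theorem~\ref{thm:positive_elem_intro} does not apply, since $a\notin G_\Theta^{>0}$. For the same reason, the property $G_\Theta^{\geq 0}G_\Theta^{>0}\subset G_\Theta^{>0}$ gives no strict inwardness for $a$.

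The missing idea is to flow by a one-parameter group whose positive-time elements lie in $G_\Theta^{>0}$, so that they push the closed diamond strictly inside itself. The paper takes $h_t=\exp(t(E+F))$ from the $\Theta$-principal $\mathfrak{sl}_2$. For $t>0$ it acts proximally on $\mathsf{F}_\Theta$, with attracting point $p^+\in D_\Theta$ and repelling point $p^-\in D_\Theta^\vee$. Hence $\overline{D}_\Theta$ and its image in $\mathsf{F}_{\Theta'}$ lie in the basin of attraction. This basin is an affine chart in which $h_t$ is a linear contraction to $r=\pi^\Theta_{\Theta'}(p^+)$. For $t>s$, positivity of the quadruple $(h_s\cdot p_\Theta,h_t\cdot p_\Theta,h_t\cdot p'_\Theta,h_s\cdot p'_\Theta)$ gives $h_t\cdot\overline{D}_\Theta\subset h_s\cdot D_\Theta$. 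Applying $\pi^\Theta_{\Theta'}$ yields $h_t\cdot\mathsf{F}^{\geq0}_{\Theta'}\subset h_s\cdot\mathsf{F}^{>0}_{\Theta'}$. This strict nesting forces each orbit $\{h_t\cdot x\}_{t\in\R}$ with $x\neq r$ to meet $\mathsf{F}^{\geq0}_{\Theta'}\smallsetminus\mathsf{F}^{>0}_{\Theta'}$ in exactly one point, depending continuously on $x$. That identifies the boundary with a sphere in the chart and $\mathsf{F}^{\geq0}_{\Theta'}$ with the cone over it, for all $\Theta'\subset\Theta$ at once.
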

For split real Lie groups and when $\Theta = \Delta$, we are always in situation~(\ref{item1_thm:pospart_intro}).  In that case, the result that the non-negative part of the flag variety is homeomorphic to a closed ball was proved by Galashin, Karp, and Lam \cite{GalashinKarpLam, GalashinKarpLam_Gr}. 
 The proof presented here applies to all semisimple Lie groups admitting a positive structure, in particular also to split real Lie groups \ie to the total positivity case; in that case it is simpler than the one in \cite{GalashinKarpLam, GalashinKarpLam_Gr} as it is more geometric in nature, and does not involve the use of canonical bases.   We only use the $\Theta$-principal $\mathfrak{sl}_2(\R)$, associated to the positive structure, and its action on the flag variety. 

There are three (families of) simple real Lie groups that admit two different positive structures with respect to two subsets \(\Theta\) and \(\Xi\). In that case one always has \(\Xi=\Delta\) so that the Lie group is split over~\(\R\). Thus, these are the split real groups with Dynkin diagrams $C_n$, $B_n$, or $F_4$.  We show in Section~\ref{sec:twostructures}  that in these cases the inclusions of $\mathsf{F}_{\Theta'}^{\Delta, >0} \subset \mathsf{F}_{\Theta'}^{>0}$ and $\mathsf{F}_{\Theta'}^{\Delta, \geq 0} \subset \mathsf{F}_{\Theta'}^{\geq 0}$ are strict.

In the case of split real Lie groups, there is an extensive literature that studies the non-negative parts of flag varieties, and describes its finer combinatorial structure and its cell decompositions. We propose to initiate a similar study for the non-negative parts of flag varieties with respect to positive structures relative to $\Theta$. In the case of the symplectic group $\Sp_{2n}(\R)$ and the closed diamonds $\overline{D}_{\{\alpha_n\}}$ in the space of Lagrangians a first discussion is given in \cite{Xie}, for $\SO(3,q)$ some aspects of the closed diamond $\overline{D}_{\{\alpha_1,\alpha_2\}}$ are described in \cite{GW_pos}

In Section~\ref{sec:symplectic}, we give a detailed discussion of the positive and non-negative parts of flag varieties for the symplectic group $\Sp_{2n}(\R)$ with its positive structure relative to  $\Theta = \{ \alpha_n\}$, where $\alpha_n$ is the short simple root. 
Of particular interest is the case of the projective space $\R\mathbb{P}^{2n-1}$. Here the non-negative part and the analogously defined non-positive part give a decomposition of $\R\mathbb{P}^{2n-1}$ into two non-linear half-spaces, which intersect along a quadric hypersurface. 

\begin{remark}
In many works on positive Grassmannians, the positive part is considered in an explicit affine chart, or with respect to an explicit basis. 
The point of view we take in \cite{GW_pos, GLW} and here is that we consider the entire family of diamonds, by considering not just the standard diamond, but all its translates. This is very fruitful as the nesting properties of diamonds provide insights on the dynamics of the action of $G$. Similarly, we expect that considering the families of positive parts of flag varieties will be very useful. For example, in the case of the symplectic group, the corresponding half-spaces in $\R\mathbb{P}^{2n-1}$ were used by \cite{Burelle_Treib} to give constructions of maximal Schottky groups. 
We propose to explore the families of positive parts of flag varieties with their nesting properties more systematically. 
\end{remark}

\section{Preliminaries and background}\label{sec:background}

Throughout the paper, $G$~is a real Lie group with finite center, finitely many connected components, and whose Lie algebra~\(\mathfrak{g}\) is semisimple.  The identity component of~\(G\) will be denoted by~\(G^\circ\).

\subsection{Classical notation}
\label{sec_class_notation}
Let $\tau\colon \mathfrak{g}\to \mathfrak{g}$ be a Cartan involution,
$\mathfrak{k}$ the fixed point set of~\(\tau\) (it is a maximal compact
subalgebra), and $\mathfrak{a} \subset \mathfrak{g}$ a Cartan subspace.  Let
\(K\subset G\) be the connected subgroup whose Lie algebra
is~\(\mathfrak{k}\); it is a maximal compact of \(G^\circ\).  We denote by $\Sigma$ the set of restricted roots, \ie
$\Sigma \subset \mathfrak{a}^*$ is the set of non-zero weights of the adjoint
action of $\mathfrak{a}$ on $\mathfrak{g}$.  Let $\Delta \subset \Sigma$ be a
set of simple roots.  We denote by~$W$ the Weyl group both seen as a subgroup of \(\GL(\mathfrak{a}^*)\) and as an abstract
Coxeter group (generated by reflections \(\{s_\alpha\}_{\alpha\in \Delta}\)).  Let
\(N_{K}(\mathfrak{a})\) be the normalizer of~\(\mathfrak{a}\) in~\(K\).  The natural map (induced by
the restriction to~\(\mathfrak{a}\) of the adjoint action)
\(N_{K}(\mathfrak{a})\to \GL(\mathfrak{a}^*)\) has image~\(W\).  For an
element~\(w\) in~\(W\), a lift of~\(w\) to \(N_{K}(\mathfrak{a})\) by
this map will be denoted by~\(\dot{w}\).  We denote by $w_\Delta$ the longest
element in~$W$, with respect to the word length on the generating set \(\{s_\alpha\}_{\alpha\in \Delta}\) and by $\iota\colon \Delta\to \Delta$ the opposition involution
(\ie for all~\(\alpha\in\Delta\), one has \(\iota(\alpha)=-w_\Delta(\alpha)\)).  The scalar product on~\(\mathfrak{a}^*\) induced by the Killing form on~\(\mathfrak{g}\) is denoted by~\((\cdot|\cdot)\).

The graph whose vertex set is~\(\Delta\), with an edge between~\(\alpha\) and~\(\beta\) if and only if \((\alpha|\beta)\neq 0\), and where an edge \((\alpha,\beta)\) is doubled (\resp tripled) when \(s_\alpha s_\beta\) is of order~\(4\) (\resp \(6\)) is called the \emph{Dynkin diagram} of~\(G\).

We will also sometimes use the group \(\mathrm{Aut}(\mathfrak{g})\).  It is a Lie subgroup of \(\GL(\mathfrak{g})\) and, as the Lie algebra~\(\mathfrak{g}\) is semisimple, the Lie algebra of \(\mathrm{Aut}(\mathfrak{g})\) is equal to~\(\mathfrak{g}\).  There is a natural action of \(\mathrm{Aut}(\mathfrak{g})\) on the Dynkin diagram of~\(\mathfrak{g}\) and the kernel of this action is denoted by \(\mathrm{Aut}_1(\mathfrak{g})\).

\subsection{Parabolic subgroups}
For any subset $\Theta \subset \Delta$ let $P_\Theta$  be the standard parabolic subgroup defined by $\Theta$ (with the convention that $P_\Delta$ is the minimal parabolic subgroup). More precisely ,\(P_\Theta\)~is the normalizer in~\(G\) of the Lie algebra \(\mathfrak{u}_\Theta = \sum_{{\alpha \in \Sigma_{\Theta}^+}}
\mathfrak{g}_{\alpha}\) (where $\mathfrak{g}_\alpha$ denotes the weight space of the root~$\alpha$).  Let~\(U_\Theta\) be the connected Lie subgroup whose Lie algebra is \(\mathfrak{u}_\Theta\).  The unipotent radical of~\(P_\Theta\) is~\(U_\Theta\) ; let~$\mathfrak{p}_\Theta$ be the Lie algebra of~\(P_\Theta\).  We denote by $P_{\Theta}^{\opp}$ the standard opposite  parabolic subgroup, by $U_{\Theta}^{\opp}$ its unipotent radical, and by $\mathfrak{p}_{\Theta}^{\opp}$ and $\mathfrak{u}_{\Theta}^{\opp}$ their Lie algebras.  One has \(\mathfrak{u}^{\opp}_\Theta = \sum_{{\alpha \in
  \Sigma_{\Theta}^+}} \mathfrak{g}_{-\alpha}\) 
The intersection $L_\Theta = P_\Theta \cap P_{\Theta}^{\opp}$ is a Levi component of~\(P_\Theta\) and of \(P_{\Theta}^{\opp}\), and its Lie algebra is denoted by $\mathfrak{l}_\Theta$.  

The Lie algebras of these subgroups admit the following decompositions: 
\[\mathfrak{l}_\Theta =\mathfrak{z}_{\mathfrak{g}}(\mathfrak{a}) \oplus 
\bigoplus_{\mathclap{\alpha \in \Span(\Delta \smallsetminus \Theta) \cap \Sigma^+}} (\mathfrak{g}_{\alpha} \oplus \mathfrak{g}_{-\alpha}),\ \mathfrak{p}_\Theta = \mathfrak{l}_\Theta \oplus \mathfrak{u}_\Theta, \text{ and } \mathfrak{p}_{\Theta}^{\opp} = \mathfrak{l}_\Theta \oplus \mathfrak{u}_{\Theta}^{\opp}.\]

\subsection{Structure of the Levi component} 
\label{sec_struct_levi}
The Lie algebra~$\mathfrak{l}_\Theta$ of the Levi component is invariant by the Cartan involution $\tau$. The intersection $\mathfrak{k}_\Theta = \mathfrak{k} \cap \mathfrak{l}_\Theta$ is the Lie algebra of a maximal compact connected subgroup~$K_\Theta$ of~$L_\Theta$. 
We have 
\begin{align}
\label{eq:max-compact-S_J}  \mathfrak{k}_\Theta &= \mathfrak{m} \oplus  
\bigoplus_{\mathclap{\alpha \in \Span(\Delta \smallsetminus \Theta) \cap \Sigma^+}} (\mathfrak{g}_{\alpha} \oplus \mathfrak{g}_{-\alpha}) \cap \mathfrak{k},\\
\label{eq:max-compact-S_J-factor}  (\mathfrak{g}_{\alpha} \oplus&
  \mathfrak{g}_{-\alpha}) \cap \mathfrak{k}  = \{ X+\tau(X)\}_{X\in \mathfrak{g}_\alpha},\ \forall \alpha \in  \Sigma^+.
\end{align}

The derived subgroup $S_\Theta= [L_\Theta, L_\Theta]$ of~$L_\Theta$ is a
semisimple\index{$S_\Theta$ the semisimple factor of $L_\Theta$}
real Lie group. 

We denote by \(L_{\Theta}^{\circ}\) the connected component of the identity
in~\(L_\Theta\) so that \(K_{\Theta}\)~is a maximal compact
subgroup of~\(L_{\Theta}^{\circ}\).  We will make use of the Cartan decomposition in~\(L_{\Theta}^{\circ}\):  setting 
\[\bar{\mathfrak{a}}_{\Theta}^{+} := \{X\in \mathfrak{a} \mid \forall \alpha\in \Delta\smallsetminus \Theta,\, \alpha(X)\geq 0\},\]
one has
\begin{itemize}
  \item For every~\(\ell\) in~\(L_{\Theta}^{\circ}\), there is a unique element \(X\in \bar{\mathfrak{a}}_{\Theta}^{+}\) such that \(\ell\) belongs to \(K_\Theta  \exp(X) K_\Theta \).
\end{itemize}

\subsection{Flag varieties}
\label{sec_flag_var}
Let $\Theta\subset \Delta$.  A subgroup of~$G$ which is conjugate to~$P_\Theta$ by the connected component~\(G^\circ\) of the identity in~$G$ is a \emph{parabolic subgroup} (of type~\(\Theta\)). A \emph{parabolic subalgebra} (of type~$\Theta$) of~$\mathfrak{g}$ is a
subalgebra conjugated (by an element of~\(G^\circ\)) to~$\mathfrak{p}_\Theta$. The space of all parabolic
subalgebras of type~$\Theta$ is called the \emph{flag variety} (of
type~$\Theta$) and denoted by~$\mathsf{F}_\Theta$.\index{$\mathsf{F}_\Theta$
  the flag variety associated with $P_\Theta$} The
space~$\mathsf{F}_\Theta$ is a subset of a Grassmannian variety of the real
vector space~$\mathfrak{g}$ and is endowed with the induced topology.
The point in~\(\mathsf{F}_\Theta\) corresponding to~\(\mathfrak{p}_\Theta\)
(base point) will be denoted by~\(p_\Theta\).
The flag
variety~$\mathsf{F}_\Theta$ is compact and the conjugation action by~$G^\circ$ on
 subalgebras 
  induces a continuous and transitive action
on~$\mathsf{F}_\Theta$;  $\mathsf{F}_\Theta$~identifies with
$G^\circ/(P_{\Theta}\cap G^{\circ})$ since $P_{\Theta} \cap G^{\circ}$ is the stabilizer of
$p_\Theta$ in~\(G^{\circ}\).
 
 The group~$G^\circ$ acts diagonally on $\mathsf{F}_\Theta \times
\mathsf{F}_{\iota(\Theta)}$. From the equality $\mathfrak{p}_{\Theta} = \Ad(\dot{w}_\Delta) \mathfrak{p}_{\iota(\Theta)}^{\opp}$,
the parabolic subalgebra~$\mathfrak{p}_{\iota(\Theta)}^{\opp}$ gives an
element of~$\mathsf{F}_{\Theta}$;
we will denote this element by~\(p^{\prime}_{\Theta}\).  The points \(p_{\iota(\Theta)}\) and \(p^{\prime}_{\iota(\Theta)}\) are base points in \(\mathsf{F}_{\iota(\Theta)}\).
A pair $(x,y)$ in $\mathsf{F}_\Theta \times
\mathsf{F}_{\iota(\Theta)}$ will be called \emph{transverse} if it belongs to
the $G^\circ$-orbit of $( {p}_\Theta,
{p}_{\iota(\Theta)}^{\prime})$. 
 \ We will also say that $x$~is transverse to~$y$.  By definition, there is one orbit of
transverse pairs; this orbit is isomorphic to $G^\circ/(G^\circ \cap L_\Theta)$ and is in fact the unique
open $G^\circ$-orbit in the product $\mathsf{F}_\Theta \times
\mathsf{F}_{\iota(\Theta)}$.

An element~\(g\) in~\(G^\circ\) acts thus on \(\mathsf{F}_\Theta\); \(g\)~is said to act proximally on~\(\mathsf{F}_\Theta\) if it has an attracting fixed point in~\(\mathsf{F}_\Theta\).  If it is the case, \(g\)~has a unique attracting fixed point~\(g^+\) in~\(\mathsf{F}_\Theta\), a unique repelling fixed point~\(g^-\) in \(\mathsf{F}_{\iota(\Theta)}\); the pair \((g^+,g^-)\) is transverse; the basin of attraction of~\(g^+\) is the affine chart of points that are transverse to~\(g^-\) (and the basin of repulsion of~\(g^-\) is the set of points transverse to~\(g^+\)).  If an element~\(g\) has a fixed point~\(x\) in~\(\mathsf{F}_\Theta\), then \(g\)~is proximal with attracting fixed point~\(x\) if and only if its tangential action of~\(g\) on \(T_x \mathsf{F}_\Theta\) is contracting (as a linear transformation of the vector space \(T_x \mathsf{F}_\Theta\)).

When $\iota(\Theta)=\Theta$ (which happens in particular when \(G\)~admits a positive structure with respect to~\(\Theta\)), one has \(\mathsf{F}_{\iota(\Theta)} = \mathsf{F}_\Theta\), \(p_{\iota(\Theta)} = p_\Theta\), \(p_{\iota(\Theta)}^{\prime} = p_{\Theta}^{\prime}\). The notion of transversality then makes sense for pairs of
elements in~$\mathsf{F}_\Theta$.

We denote the two orbit maps by
\begin{align}
  f&\colon G \to \mathsf{F}_\Theta \mid g \mapsto g\cdot {p}_\Theta\\
\intertext{and by }
  f^{\opp}&\colon G \to \mathsf{F}_{\iota(\Theta)} \mid g \mapsto  g\cdot {p}_{\iota(\Theta)}^{\prime}.
\end{align}
We will later use that $f$ and $f^{\opp}$  are open \(G\)-equivariant maps and that the restriction of~$f$ to $U_{\Theta}^{\opp}$ and the restriction of $f^{\opp}$ to $U_\Theta$ are diffeomorphisms onto their images \(f(U_{\Theta}^{\opp})\) and \(f^\opp(U_\Theta)\)  which are also equal to \(f(G)\) and \(f^\opp(G)\) respectively.

The affine chart of $\mathsf{F}_\Theta$ with respect to
${p}_{\iota(\Theta)}^{\prime}$ consists of all points transverse to
${p}_{\iota(\Theta)}^{\prime}$ and is precisely the image of~\(f\). The group $U_{\Theta}^{\opp}$ acts
simply transitive on the affine chart with respect to
${p}_{\iota(\Theta)}^{\prime}$ and $f|_{U_{\Theta}^{\opp}}$ is a \(L_\Theta\)-equivariant parametrization of the affine chart.  In the same way, the map \(X\mapsto \exp(X)\cdot p_\Theta\) from \(\mathfrak{u}_{\Theta}^{\opp}\) to the affine chart is a \(L_{\Theta}^{\circ}\)-equivariant parametrization. Similarly, for the affine chart with respect to~${p}_\Theta$. 

\subsection{Open Bruhat cells and transversality}
\label{sec_open_cell}

The open Bruhat cells are the open subset 
\[\Omega= (P_{\iota(\Theta)}\cap G^\circ) \dot{w}_\Delta (P_{\Theta}\cap G^\circ)\]  and \[\Omega^{\opp} = (P_{\iota(\Theta)}^{\opp}\cap G^\circ) \dot{w}_\Delta (P_{\Theta}^{\opp}\cap G^\circ),\] where $\dot{w}_{\Delta}$ is any representative in \(N_{K}( \mathfrak{a})\) of the longest element~$w_{\Delta}$ of the Weyl group~$W$.  
  
The equalities $\Omega= U_{\iota(\Theta)} \dot{w}_\Delta (P_{\Theta}\cap G^\circ)$  and $\Omega^{\opp} = U_{\iota(\Theta)}^{\opp} \dot{w}_\Delta (P_{\Theta}^{\opp}\cap G^\circ)$ also hold.  More precisely the map \[U_{\iota(\Theta)} \times (P_{\Theta}\cap G^\circ) \to \Omega\mid (g,h)\mapsto g \dot{w}_\Delta h\] is a diffeomorphism.

Note that, for similar reason, the product maps from \(U_{\Theta}^{\opp}\times L_\Theta \times U_\Theta\) to~\(G\) and from \(U_{\Theta}\times L_\Theta \times U_{\Theta}^{\opp}\) to~\(G\) are diffeomorphisms onto their images. However, these product maps are not proper since their images are not closed in~\(G\).

When \(\iota(\Theta)=\Theta\), one also has  $\Omega= U_{\Theta} \dot{w}_\Delta (P_{\Theta}\cap G^\circ)$  and $\Omega^{\opp} = U_{\Theta}^{\opp} \dot{w}_\Delta (P_{\Theta}^{\opp}\cap G^\circ)$

 Transversality of points in $\mathsf{F}_\Theta$ is connected to the open Bruhat cells. 
 An element $g \in G$ belongs to ~$\Omega$ (respectively to $\Omega^{\opp}$)
 if and only if $f(g)$ is transverse to ${p}_{\iota(\Theta)}$  (respectively if $f^{\opp}(g)$ is transverse to ${p}_{\Theta}^{\prime}$). 
 
 \subsection{Maps between flag varieties}\label{sec:maps}
    Let $\Theta\subset\Xi$ be two subsets of~\(\Delta\), then the flag variety  $\mathsf{F}_{\Xi}$ fibers over $\mathsf{F}_\Theta$.  In fact there is a unique $G^\circ$-equivariant map \(\mathsf{F}_\Xi\to \mathsf{F}_\Theta\) and we will denote if \(\pi_{\Theta}^{\Xi}\). Its fibers are compact and can be identified with flag varieties associated to the Levi subgroup~$L_{\Theta}^{\circ}$. 

These maps are compatible with each other: when \(\Theta\subset \Xi \subset \Xi'\), one has \(\pi_{\Theta}^{\Xi}\circ \pi_{\Xi}^{\Xi'} = \pi_{\Theta}^{\Xi'}\).

  When \(\Xi=\Delta\), we will simply denote by~\(\pi_\Theta\) the map \(\pi_{\Theta}^{\Delta}\colon \mathsf{F}_\Delta\to \mathsf{F}_\Theta\).

We will later make use of the following two facts: 
\begin{enumerate}
\item The fiber $(\pi^{\Xi}_\Theta)^{-1}({p}_\Theta)$ is connected.  Precisely the
  connected component of the identity $L_{\Theta}^{\circ}$ of $L_\Theta$ acts
  transitively on the fiber $(\pi^{\Xi}_{\Theta})^{-1}({p}_\Theta)$.
\item The map $\pi^{\Xi}_\Theta$ is a proper. In fact, it is even a Riemannian submersion when the flag varieties are equipped with \(K\)-invariant Riemannian structures. 
\end{enumerate}

Let~\(\Theta\) and~\(\Theta'\) be two subsets of~\(\Delta\) and let \(A\subset \mathsf{F}_\Theta\).  Due to the above compatibilities, one has that the subset \(\pi_{\Theta'}^{\Xi}\bigl( (\pi_{\Theta}^{\Xi})^{-1}(A)\bigr)\) does not depend on the subset~\(\Xi\) of~\(\Delta\) containing~\(\Theta\) and~\(\Theta'\) and is equal to \(\pi_{\Theta'}\bigl( (\pi_{\Theta})^{-1}(A)\bigr)\).

\subsection{The positive unipotent semigroup}
The Levi subgroup $L_\Theta$ acts via the adjoint action on
$\mathfrak{u}_\Theta$.  Let $\mathfrak{z}_\Theta$ denote the center of
$\mathfrak{l}_\Theta$ and $\mathfrak{t}_\Theta = \mathfrak{z}_\Theta \cap
\mathfrak{a}$ its intersection with the Cartan subspace.
Then
$\mathfrak{u}_\Theta$ decomposes into the weight 
spaces under the adjoint action of~$\mathfrak{t}_\Theta$; for every
 $\beta \in \mathfrak{t}^*_\Theta$, set
\[\mathfrak{u}_\beta\coloneq \{ N \in \mathfrak{g} \mid \mathrm{ad}(Z)N =
\beta(Z) N , \, \forall Z \in \mathfrak{t}_\Theta \}.
\]
These weight
spaces are of course related to those of~$\mathfrak{a}$:
\begin{equation}
\mathfrak{u}_\beta = \sum_{\mathclap{\substack{\alpha \in \Sigma\\ 
  \alpha|_{\mathfrak{t}_\Theta} = \beta }}}  \mathfrak{g}_{\alpha}. \label{eq:u_beta_sum_g_alpha}
\end{equation}
By 
abuse of notation, for an element~$\beta$
in~$\Sigma$, we will denote its restriction
to~$\mathfrak{t}_\Theta$  as well by~$\beta$, and hence by~$\mathfrak{u}_\beta$ the corresponding
weight space. In particular for  $\beta \in \Delta $, one has
\[\mathfrak{u}_\beta =\sum_{\mathclap{\substack{\alpha \in \Sigma\\ \alpha- \beta \in
  \Span(\Delta \smallsetminus \Theta)}}} \mathfrak{g}_{\alpha}. \]

The group~$G$ is said to \emph{admit a positive structure with respect} to~$\Theta$ \cite[Def.~3.1]{GW_pos} if, for every $\alpha \in \Theta$, there exists an acute nontrivial 
$L_\Theta^{\circ}$-invariant closed convex cone $c_\alpha \subset 
\mathfrak{u}_\alpha$; its interior~$\mathring{c}_\alpha$ is then non-empty.  The opposite cone $-c_\alpha$ is also $L_\Theta^{\circ}$-invariant. 
When $G$~admits a positive structure with respect to~$\Theta$, then necessarily $\iota(\Theta) = \Theta$ \cite[Rem.~3.5]{GW_pos}. 
We fix a cone~$c_\alpha$ for each~$\alpha$ in~$\Theta$, and the corresponding
opposite cone $c_{\alpha}^{\opp} = \tau(-c_\alpha) \subset
\mathfrak{u}_{\alpha}^{\opp}$ (\(\tau\)~is the Cartan involution, see Section~\ref{sec_class_notation}).  This amounts to a $U$-pinning \cite[Section~2.1]{GLW}. We refer the reader to \cite{GW_pos} for more details. 

The non-negative unipotent semigroup $U_\Theta^{\geq 0}$ is the semigroup of
$U_\Theta$ generated by $\exp(v)$ with $v \in c_\alpha$, $\alpha \in
\Theta$. 
 The positive semigroup $U_{\Theta}^{>0}$ is the interior of $U_{\Theta}^{\geq 0}$.  The positive semigroup $U_{\Theta}^{\opp >0}$ and the opposite non-negative unipotent semigroup $U_{\Theta}^{\opp \geq 0}$ inside $U_{\Theta}^{\opp}$ are defined analogously:  \(U_{\Theta}^{\opp \geq 0}\) is the semigroup generated by \(\exp\bigl( \bigcup_{\alpha\in \Theta} c_{\alpha}^{\opp}\bigr)\) and \(U_{\Theta}^{\opp > 0}\) is its interior.

The non-negative and positive unipotent semigroups  lead to the definition of the non-negative and positive semigroups in~$G$. 

\begin{definition}\label{def:nonnegative}\cite[Def.~4.2]{GW_pos}
Let $G$ be a semisimple real Lie group admitting a positive structure with respect~$\Theta$. The \emph{non-negative semigroup} $G_{\Theta}^{\geq 0}$ in~$G$ is the semigroup generated by $U_{\Theta}^{\geq 0}$, $L_{\Theta}^{\circ}$, and $U_{\Theta}^{\opp \geq 0}$. 
The \emph{positive semigroup} $G_{ \Theta}^{>0}$ is the interior of $G_{\Theta}^{\geq 0}$. 
\end{definition}

\subsection{Diamonds}
\label{sec_diamonds}
The unipotent positive semigroup gives rise to the notion of positive triples, and more generally positive $n$-tuples in the flag variety $\mathsf{F}_{\Theta}$. To organize these notions, we use in \cite[Section~13]{GW_pos} the notion of diamonds (this notion was introduced in \cite{LabourieToulisse2023} for the case of \(\SO(2,n)\) and exploited in \cite{GLW} in full generality). 
The standard diamond defined by $U_\Theta^{>0}$ is the set 
\[D_\Theta = U_{\Theta}^{>0}\cdot {p}_{\Theta}^{\prime} \subset \mathsf{F}_{\Theta}.\] 
Furthermore, the equalities \(D_\Theta = U_{\Theta}^{\opp >0}\cdot {p}_\Theta = f(U_{\Theta}^{\opp >0}) = f^{\opp} (U_\Theta^{>0})\) hold.

A diamond with extremities $a,b \in \mathsf{F}_{\Theta}$ is the image
of $D_\Theta$ under an element of $\mathrm{Aut}_1(\mathfrak{g})$ that sends
$({p}_\Theta, {p}_{\Theta}^{\prime})$ to $(a,b)$; we sometimes include the extremities of the diamond in the notation: $D(a,b)$. There are several
diamonds with extremities $a,b$ \cite[Cor.~13.5]{GW_pos}.  However, for any given diamond~$D$ with extremities~$a$ and~$b$ there is a unique opposite diamond with extremities~$a$ and~$b$ denoted~$D^\vee$: the diamond opposite to~\(D_\Theta\) is \(D_{\Theta}^{\vee}= (U_{\Theta}^{>0})^{-1}\cdot p_{\Theta}^{\prime}\) and if \(D=g\cdot  D_\Theta\) (with \(g\in \mathrm{Aut}_1(\mathfrak{g})\)), one has \(D^\vee = g\cdot D_{\Theta}^{\vee}\) (this does not depend on the choices \cite[Lem.~13.9]{GW_pos}).  Let~\(x\) be in~\(D\), then \(D^\vee\)~is in fact the set of \(y\in \mathsf{F}_\Theta\) such that \((y,a,x,b)\) is a positive quadruple.

Let us recall a couple of facts from \cite[Section~13]{GW_pos} and \cite[Section~2]{GLW}.  
\begin{enumerate}[leftmargin=*]
\item A triple  $(f_1, f_2, f_3)$ of points in $\mathsf{F}_{\Theta}$ is positive if $f_1$ and $f_3$ are transverse and there is a diamond with extremities $f_1,f_3$ that contains~$f_2$. 
\item A quadruple $(f_1, f_2, f_3, f_4)$ of points in $\mathsf{F}_{\Theta}$ is positive if and only if there exists a diamond $D$ with extremities $f_1,f_3$ that contains~$f_2$ and such that $f_4$~belongs to~$D^\vee$. 
\item Let $(f_1, f_2, f_3, f_4)$ be a positive quadruple, then there exist a unique pair of diamonds $D(f_1, f_4)$ and $D(f_2, f_3)$ such that  ${D(f_2, f_3)}$ is
 contained in  $D(f_1, f_4)$; furthermore the closure $\overline{D}(f_2, f_3)$ is
 contained in  $D(f_1, f_4)$. 
\end{enumerate}

\subsection{The $\Theta$-principal $\mathfrak{s}\mathfrak{l}_2$} 
\label{sec_theta_prin}
In \cite[Section~5.5]{GW_pos} we introduced the notion of $\Theta$-system. 
 A $\Theta$-system $(E_\alpha, F_\alpha, D_\alpha)_{\alpha\in \Theta}$ consists of a family of special elements $E_\alpha \in \mathring{c}_\alpha$, $F_\alpha \in \mathring{c}^{\opp}_{\alpha}$, $D_\alpha = [E_\alpha, F_\alpha]$ for \(\alpha\in \Theta\), which are constructed from the root vectors $X_\alpha \in \mathfrak{g}_\alpha$, and their orbits under the Weyl group of~\(S_\Theta\).

More precisely, in most cases, one has \(\mathfrak{u}_\alpha= \mathfrak{g}_\alpha\) in which case \(E_\alpha=X_\alpha\) is a weight vector for the action of~\(\mathfrak{a}\) (and \(c_\alpha = \R_{\geq 0} X_\alpha\)).  In some case one has \(\mathfrak{u}_\alpha\neq \mathfrak{g}_\alpha\) (in \cite{GW_pos} such a root~\(\alpha\) is called a ``special root''), in which case there is a family of pairwise orthogonal roots \((\gamma_i)_{i=0, \dots, d}\) with \(\gamma_0=\alpha\) and, for all~\(i\) the weight \(\gamma_i-\alpha\) is a non-negative combination of simple roots in \(\Delta\smallsetminus \Theta\), one has \(E_\alpha = \sum_{i=0}^{d} X_{\gamma_i} = X_\alpha + \sum_{i=1}^{d} X_{\gamma_i}\), and a linear combination \(\sum_{i=0}^{d} \lambda_i X_{\gamma_i}\) belongs to \(\mathring{c}_\alpha\) if and only if, for all~\(i\), one has~\(\lambda_i>0\).

A $\Theta$-system generates a split real subalgebra $\mathfrak{h}_\Theta$ in $\mathfrak{g}$. The principal \(3\)-dimensional subalgebra of $\mathfrak{h}_\Theta$  is called the $\Theta$-principal $\mathfrak{s}\mathfrak{l}_2$ in $\mathfrak{g}$. It is generated by elements $E = \sum_{\alpha\in\Theta} q_{\alpha}^{1/2} E_\alpha,F = \sum_{\alpha\in\Theta} q_{\alpha}^{1/2} F_\alpha,D = \sum_{\alpha\in\Theta} q_{\alpha} D_\alpha$ for an explicitly defined family of positive integers \((q_\alpha)_{\alpha\in\Theta}\).  
We denote by $\mathsf{F}_0$ the flag variety of this \(3\)-dimensional simple algebra.  Its base points, corresponding to the parabolic subalgebras $\R D\oplus \R E$ and $\R D\oplus \R F$ are denoted~\(p\) and~\(p'\).  They are the extremities of the standard diamond~\(D_{0}\) of \(\mathsf{F}_0 \simeq \R\mathbb{P}^1\), as well as the extremities of the standard opposite diamond \(D_{0}^{\vee}\). 
The $\Theta$-principal $\mathfrak{s}\mathfrak{l}_2$ gives rise to an equivariant embedding $\phi \colon \mathsf{F}_0 \rightarrow \mathsf{F}_{\Theta}$, with the properties that $\phi(p) = {p}_{\Theta}$, $\phi(p') = {p}_{\Theta}^{\prime}$, 
 $\phi(D_{0}) \subset D_\Theta$, and $\phi(D_{0}^{\vee}) \subset D_\Theta^\vee$. 

Furthermore, for all~\(s\) in~\(\R\), the element \(x(s)=\exp(s E)\) of~\(G\) belongs to~\(U_{\Theta}^{>0}\) if and only if \(s>0\).  Similarly, the element \(y(s)=\exp(s F)\) belongs to \(U_{\Theta}^{\opp>0}\)  if and only if \(s>0\).  Equally, for all~\(\lambda\in \R_{>0}\), the element \(\check{\chi}(\lambda)= \exp\bigl( (\log \lambda)D\bigr)\) belongs to~\(L_{\Theta}^{\circ}\).

We will later need the following exchange relation.

\begin{lemma}\label{lem_braid_sl2}
  For all~\(t\) in~\(\R\), one has
  \[x(t)y(t) = y\Bigl( \frac{t}{1+t^2}\Bigr) \check{\chi}(1+t^2) x\Bigl( \frac{t}{1+t^2}\Bigr).\]
\end{lemma}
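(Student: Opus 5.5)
The identity lives entirely inside the $\Theta$-principal $\mathfrak{sl}_2$, so we reduce to a computation in $\SL_2(\R)$. The elements $E$, $F$, $D$ span a Lie subalgebra isomorphic to $\mathfrak{sl}_2(\R)$, with $[E,F]=D$, $[D,E]=2E$ and $[D,F]=-2F$; this is the normalization coming from the principal three-dimensional subalgebra of $\mathfrak{h}_\Theta$ in \cite[Section~5.5]{GW_pos}. Choose the Lie algebra homomorphism $\rho\colon\mathfrak{sl}_2(\R)\to\mathfrak{g}$ sending
\[
e=\begin{pmatrix}0&1\\0&0\end{pmatrix},\quad f=\begin{pmatrix}0&0\\1&0\end{pmatrix},\quad h=\begin{pmatrix}1&0\\0&-1\end{pmatrix}
\]
to $E$, $F$, $D$. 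Since $\SL_2(\R)$ is not simply connected, we integrate $\rho$ to a homomorphism $\tilde\rho\colon\widetilde{\SL_2(\R)}\to G$ from the universal cover. Then
\[
x(t)=\tilde\rho(\exp te),\qquad y(t)=\tilde\rho(\exp tf),\qquad \check\chi(\lambda)=\tilde\rho\bigl(\exp((\log\lambda)h)\bigr),
\]
so it suffices to prove the corresponding identity among the elements $\exp(te)$, $\exp(sf)$ and $\exp(ch)$ of $\widetilde{\SL_2(\R)}$.

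Both sides of the identity are continuous paths in $\widetilde{\SL_2(\R)}$ parametrized by $t\in\R$. At $t=0$ both sides equal the identity, since $1+t^2=1$ and $t/(1+t^2)=0$. If the two sides agree after projecting to $\SL_2(\R)$, then their quotient is a continuous path in the discrete central kernel starting at the identity, hence identically the identity. So it is enough to check the identity in $\SL_2(\R)$, which is a direct matrix multiplication.

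For the left-hand side,
\[
\begin{pmatrix}1&t\\0&1\end{pmatrix}\begin{pmatrix}1&0\\t&1\end{pmatrix}=\begin{pmatrix}1+t^2&t\\t&1\end{pmatrix}.
\]
For the right-hand side, set $s=t/(1+t^2)$ and $\lambda=1+t^2$, and compute
\[
\begin{pmatrix}1&0\\s&1\end{pmatrix}\begin{pmatrix}\lambda&0\\0&\lambda^{-1}\end{pmatrix}\begin{pmatrix}1&s\\0&1\end{pmatrix}=\begin{pmatrix}\lambda&\lambda s\\ \lambda s&\lambda s^2+\lambda^{-1}\end{pmatrix}.
\]
Here $\lambda s=t$, and
\[
\lambda s^2+\lambda^{-1}=\frac{t^2}{1+t^2}+\frac{1}{1+t^2}=1,
\]
so the two matrices coincide.

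The only real point of care is the normalization of $D$: the sign and scale of the relations $[D,E]=2E$ and $[D,F]=-2F$ determine whether $\check\chi(\lambda)$ corresponds to $\operatorname{diag}(\lambda,\lambda^{-1})$ or to $\operatorname{diag}(\lambda^{1/2},\lambda^{-1/2})$. I would check this against the normalization of the principal $\mathfrak{sl}_2$ in \cite{GW_pos}, where $D$ is the standard semisimple element of the $\mathfrak{sl}_2$-triple.
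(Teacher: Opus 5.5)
Your proof is correct and is essentially the paper's argument: the paper also reduces to \(\widetilde{\SL}_2(\R)\) and checks the same matrix identity, lifting it by representing elements as paths \(s\mapsto g(s)\) and verifying the \(\SL_2(\R)\) identity at parameter \(st\) for every \(s\in[0,1]\), which is the same continuity argument as your ``quotient is a continuous path in the discrete kernel'' step. Your normalization concern is resolved as you assumed, since the paper's proof likewise takes \(\check{\chi}(\lambda)\) to correspond to \(\operatorname{diag}(\lambda,\lambda^{-1})\), i.e.\ \(D\) plays the role of the standard semisimple element \(h\) of the \(\mathfrak{sl}_2\)-triple.
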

\begin{proof}
  Since the relation only involves the principal \(\mathfrak{sl}_2\), it is sufficient to work in the connected subgroup of~\(G\) whose Lie algebra is the \(\Theta\)-principal \(\mathfrak{sl}_2(\R)\).  This subgroup is a cover of \(\PSL_2(\R)\) and it is therefore enough to work in the universal cover \(\widetilde{\SL}_2(\R)\).
  
  Elements of \(\widetilde{\SL}_2(\R)\) are represented by continuous paths \((g(s))_{s\in [0,1]}\) in \(\SL_2(\R)\) and the multiplication in \(\widetilde{\SL}_2(\R)\) is the componentwise multiplication of paths : \((g(s))_{s\in [0,1]} (h(s))_{s\in [0,1]} = (g(s) h(s))_{s\in [0,1]}\).  The element \(x(t)\) of \(\widetilde{\SL}_2(\R)\) is represented by \(\bigl( \bigl(\begin{smallmatrix}
    1 & st \\ 0 & 1
  \end{smallmatrix}\bigr)\bigr)_{s\in [0,1]}\) and similarly for \(y(t)\).  The element \(\check{\chi}(1+t^2)\) is represented by \(\bigl( \bigl(\begin{smallmatrix}
    1+s^2 t^2 & 0 \\ 0 & (1+s^2 t^2)^{-1}
  \end{smallmatrix}\bigr)\bigr)_{s\in [0,1]}\).  Hence, the result will follow from the following relation in \(\SL_2(\R)\):
  \[\begin{pmatrix}
    1 & st \\ 0 & 1
  \end{pmatrix}\begin{pmatrix}
    1 & 0 \\ st & 1
  \end{pmatrix} = \begin{pmatrix}
    1 & 0 \\ \frac{st}{1+s^2t^2} & 1
  \end{pmatrix}\begin{pmatrix}
    1+s^2 t^2 & 0 \\ 0 & \frac{1}{1+s^2t^2}
  \end{pmatrix}
  \begin{pmatrix}
    1 & \frac{st}{1+s^2t^2} \\ 0 & 1
  \end{pmatrix} \]
  which is a direct computation.
\end{proof}

\section{The positive semigroup in $G$}\label{sec:pos_group}
In this section we investigate the finer structure of the non-negative and the positive semigroups $G_{\Theta}^{\geq 0}$ and~$G_{\Theta}^{>0}$. 

The main statement is the following description of the positive semigroup.

\begin{theorem}\label{thm:decomposition}
Let $G$ be a semisimple real Lie group admitting a positive structure relative to $\Theta$.
Then the positive semigroup $G_{\Theta}^{>0}$ and the non-negative semigroup $G_{\Theta}^{\geq 0}$ admit the following decompositions  
\begin{align*}
  G_{\Theta}^{>0} &= U_{\Theta}^{> 0} \cdot L_{\Theta}^{\circ} \cdot U_{\Theta}^{\opp > 0} = U_{\Theta}^{\opp >0} \cdot L_{\Theta}^{\circ} \cdot U_{\Theta}^{>0},\\
  G_{\Theta}^{\geq 0} &= U_{\Theta}^{\geq 0} \cdot L_{\Theta}^{\circ} \cdot U_{\Theta}^{\opp \geq 0} = U_{\Theta}^{\opp \geq 0} \cdot L_{\Theta}^{\circ} \cdot U_{\Theta}^{\geq 0}.
\end{align*} 
\end{theorem}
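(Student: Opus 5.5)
I would first prove the decomposition of $G_{\Theta}^{\geq 0}$, and then deduce the one for $G_{\Theta}^{>0}$ by taking interiors. Set $M = U_{\Theta}^{\geq 0} L_{\Theta}^{\circ} U_{\Theta}^{\opp \geq 0}$. The set $M$ is contained in $G_{\Theta}^{\geq 0}$ and contains the generators, so it suffices to show that $M$ is a semigroup. Since $L_{\Theta}^{\circ}$ normalizes both $U_{\Theta}^{\geq 0}$ and $U_{\Theta}^{\opp\geq 0}$ (the cones $c_\alpha$, $c_\alpha^{\opp}$ are $L_\Theta^\circ$-invariant), this reduces to the exchange property
\[U_{\Theta}^{\opp \geq 0}\, U_{\Theta}^{\geq 0} \subset U_{\Theta}^{\geq 0} L_{\Theta}^{\circ} U_{\Theta}^{\opp \geq 0}.\]
By induction on word length in the generators, it is then enough to treat a single product $\exp(w)\exp(v)$ with $w\in c_\beta^{\opp}$ and $v \in c_\alpha$. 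If $\alpha\neq\beta$, then $[\mathfrak{u}_{-\beta},\mathfrak{u}_\alpha]\subset\mathfrak{l}_\Theta$ vanishes (the weight $\alpha-\beta$ is a $\mathfrak{t}_\Theta$-weight that is neither zero nor a weight of $\mathfrak l_\Theta$), so the two factors commute.

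The case $\alpha=\beta$ is the main obstacle. My first idea is to use the $L_\Theta^\circ$-action to move $v$ and $w$ to a normal form inside a rank-one (or $\Theta$-principal-like) $\mathfrak{sl}_2$, and then apply an exchange relation of the type in Lemma~\ref{lem_braid_sl2}. This works cleanly when $\mathfrak u_\alpha=\mathfrak g_\alpha$ is one-dimensional, but for special roots a general pair $(w,v)$ is not simultaneously normalizable. Because of this, I would rather use a geometric argument via diamonds.

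For an arbitrary $u\in U^{\geq0}_\Theta$ and $u'\in U_\Theta^{\opp\geq0}$, I argue as follows. The point $u'u\cdot p'_\Theta = u'\cdot p'_\Theta$ lies in $U^{\opp\geq0}_\Theta \cdot p'_\Theta$, which is in the closure of the diamond opposite $D^\vee_\Theta$ viewed from $p_\Theta$. The point $(u'u)\cdot p_\Theta$ lies in $u'\cdot\overline{D}_\Theta$, and I need to show it is transverse to $p'_\Theta$. If so, then $u'u\in\Omega^{\opp}$, which gives a factorization $u'u=a\,\ell\,b$ with $a\in U_\Theta$, $\ell\in L_\Theta$, $b\in U^{\opp}_\Theta$. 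Positivity of $a$ and $b$ is then read off from the orbit maps: $b\cdot p_\Theta$ is determined by $u'u\cdot p_\Theta$, and $a\cdot p'_\Theta = u'u\cdot p'_\Theta$. These points lie in $\overline{D}_\Theta$ because of the diamond nesting properties (property (3) of positive quadruples, together with the description $D_\Theta = f(U^{\opp>0}_\Theta)=f^{\opp}(U^{>0}_\Theta)$ and, passing to closures, the non-negative versions). Membership $\ell\in L_\Theta^\circ$ follows from a connectedness argument: deform $u,u'$ to the identity inside the semigroups, along paths $\exp(tv)$, where the factorization exists and varies continuously.

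To handle the transversality and closure issues, I would first treat $u\in U^{>0}_\Theta$ and $u'\in U^{\opp>0}_\Theta$, where diamond properties give open conditions, and then obtain the non-negative case by approximation. For the approximation step I need that the factors stay bounded. I would control this with the Cartan decomposition of $L_\Theta^\circ$ and the compactness of closed diamonds. The equality with the reversed order $U^{\opp\geq0}L^\circ U^{\geq0}$ follows by symmetry, applying $\tau$ composed with inversion, or by the same argument.

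Finally, for the positive part, $U^{>0}L^\circ U^{\opp>0}$ is open, because the product map is a diffeomorphism onto an open set and each factor is open in its group. It is contained in $G^{\geq0}_\Theta$, hence in the interior $G^{>0}_\Theta$. Conversely, if $g$ is interior to $G^{\geq0}_\Theta = U^{\geq0}L^\circ U^{\opp\geq0}$, write $g=a\ell b$. Using the homeomorphism of the product map, interiority forces $a$ to be interior in $U^{\geq0}_\Theta$ and $b$ to be interior in $U^{\opp\geq0}_\Theta$, so $a\in U^{>0}_\Theta$ and $b\in U^{\opp>0}_\Theta$.
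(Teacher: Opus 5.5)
The overall plan (show $M=U_{\Theta}^{\geq0}L_{\Theta}^{\circ}U_{\Theta}^{\opp\geq0}$ is a semigroup, then take interiors) is reasonable, and the final interior step is fine. However, the exchange argument has a set-up error, and its passage to the non-negative case has a genuine gap.

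\textbf{Set-up error.} The base points are interchanged. $U_\Theta$ fixes $p_\Theta$ and $U_\Theta^{\opp}$ fixes $p'_\Theta$. So $u'u\cdot p'_\Theta=u'\cdot(u\cdot p'_\Theta)$, whereas $u'\cdot p'_\Theta=p'_\Theta$. Also, $u'u\cdot p_\Theta=u'\cdot p_\Theta$ is \emph{always} transverse to $p'_\Theta$, which only gives the trivial factorization in $U_\Theta^{\opp}L_\Theta U_\Theta$. The condition for $u'u\in U_\Theta L_\Theta U_\Theta^{\opp}$ is that $u'u\cdot p'_\Theta$ be transverse to $p_\Theta$, i.e.\ that $u\cdot p'_\Theta$ be transverse to $u'^{-1}\cdot p_\Theta$.

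For $u\in U_\Theta^{>0}$ and $u'\in U_\Theta^{\opp>0}$ this does hold. These points lie in $D_\Theta$ and in $D_\Theta^\vee=(U_\Theta^{\opp>0})^{-1}\cdot p_\Theta$ respectively, and $(y,p_\Theta,z,p'_\Theta)$ is positive for $z\in D_\Theta$, $y\in D_\Theta^\vee$. Then $a\cdot p'_\Theta\in u'\cdot D_\Theta\subset D_\Theta$ gives $a\in U_\Theta^{>0}$. You must read $b$ off from $(u'u)^{-1}\cdot p_\Theta=b^{-1}\cdot p_\Theta\in u^{-1}\cdot D_\Theta^\vee\subset D_\Theta^\vee$, not from $b\cdot p_\Theta$. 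Adding connectedness of $U_\Theta^{\opp>0}\times U_\Theta^{>0}$ to get $\ell\in L_\Theta^\circ$, this corrected positive case is a legitimate alternative to the paper's proof of $U_\Theta^{>0}L_\Theta^\circ U_\Theta^{\opp>0}=U_\Theta^{\opp>0}L_\Theta^\circ U_\Theta^{>0}$. The paper instead shows both sets are connected components of $\Omega\cap\Omega^{\opp}$ that meet, via Lemma~\ref{lem_braid_sl2}.

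\textbf{The gap: the non-negative case.} Suppose $u'_nu_n=a_n\ell_nb_n\to u'u$. Then $a_n\cdot p'_\Theta\to u'u\cdot p'_\Theta$ and $b_n^{-1}\cdot p_\Theta\to (u'u)^{-1}\cdot p_\Theta$. So $a_n,b_n$ stay bounded \emph{exactly} when $u'u\cdot p'_\Theta$ is transverse to $p_\Theta$. Boundedness is therefore the very transversality you still need, now for points of $\overline{D}_\Theta$ and $\overline{D_\Theta^\vee}$ that may lie on the boundary. Compactness of $\overline{D}_\Theta$ only gives subconvergence inside $\overline{D}_\Theta$, which contains points not transverse to $p_\Theta$ (e.g.\ $p_\Theta$ itself). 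The Cartan decomposition of $L_\Theta^\circ$ does not address this at all.

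The missing ingredient is the argument of Theorem~\ref{thm:positive}(\ref{item1bis_thm:positive}). Take $g=\lim s_n$ with $s_n\in S$; here $g=u'u$, which lies in $\overline{S}$ by your positive case. Choose $x,y\in D_\Theta^\vee$ with $(p'_\Theta,x,y,p_\Theta)$ positive, so that $\overline{D}_\Theta\subset D(x,y)$. Passing to the limit in the nested diamonds $D(x,s_n\cdot p_\Theta)\subset D(x,s_n\cdot p'_\Theta)$ shows three things: $(x,g\cdot p_\Theta,g\cdot p'_\Theta,y)$ is positive, $p_\Theta\in\overline{D}(x,g\cdot p_\Theta)$, and $g\cdot p'_\Theta\in D^\vee(x,g\cdot p_\Theta)$. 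Hence $p_\Theta$ is transverse to $g\cdot p'_\Theta$. Without this, neither the non-negative decomposition nor the equality $G_\Theta^{\geq0}=M$, on which your interior argument rests, is established.

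\textbf{Smaller points.} The generator-level induction would need a length-preserving exchange $\exp(c_\alpha^{\opp})\exp(c_\alpha)\subset\exp(c_\alpha)L_\Theta^\circ\exp(c_\alpha^{\opp})$ in order to terminate. Also, $\tau\circ(\text{inversion})$ maps $U_\Theta^{\geq0}L_\Theta^\circ U_\Theta^{\opp\geq0}$ to itself, so it does not give the reversed product; use the symmetric argument instead.
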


\begin{remark}
  Since the product map on \(U_{\Theta} \times L_{\Theta}^{\circ} \times U_{\Theta}^{\opp }\) (\resp on \(U_{\Theta}^{\opp } \times L_{\Theta}^{\circ} \times U_{\Theta}\)) is one-to-one (\cf Section~\ref{sec_open_cell}), this result gives also parametrizations of the semigroups \(G^{>0}_{\Theta}\) and \(G_{\Theta}^{\geq 0}\).
\end{remark}

\begin{remark}
One could alternatively introduce the semigroup as the set  \(U_\Theta^{> 0} \cdot L_{\Theta}^{\circ} \cdot U_{\Theta}^{\opp > 0}\) and then show that this is a semigroup. This is the strategy of Lusztig in \cite{LusztigPosRed}. This also works for positivity relative to~$\Theta$, and is useful when one does not work over~$\R$. One feature of our approach is that we do not have to prove a general exchange relation for $G$, but only need the exchange relation in \(\SL_2(\R)\), see Lemma~\ref{lem_braid_sl2}. Let us note that in \cite{GKNW} the general exchange relation is realized as a sequence of cluster mutations in a non-commutative cluster algebra. 
\end{remark}

The key steps in establishing Theorem~\ref{thm:decomposition} (or similarly in showing that  \(U_\Theta^{> 0} \cdot L_{\Theta}^{\circ} \cdot U_{\Theta}^{\opp > 0}\) is a semigroup) are formulated in the next theorem. 

\begin{theorem}\label{thm:positive}
Let $G$ be a semisimple real Lie group admitting a positive structure with respect to~$\Theta$.

 Consider the subset $S=U_\Theta^{> 0} \cdot L_{\Theta}^{\circ} \cdot U_{\Theta}^{\opp > 0}$. Then the following holds 
  \begin{enumerate}
 \item\label{item1_thm:positiie} For every $s\in S$, the points \(s\cdot p'_\Theta\) and \(s\cdot p_\Theta\) belong to the standard diamond~\(D_\Theta\). 
 \item \label{item1bis_thm:positive} The closure~\(\overline{S}\) is contained in \(U_{\Theta}^{\geq 0} \cdot L_{\Theta}^{\circ} \cdot U_{\Theta}^{\opp\geq 0}\).
 \item\label{item2_thm:positive} $S$ is a connected component of $\Omega\cap \Omega^{\opp}$ (Section~\ref{sec_open_cell} introduces the open Bruhat cells~$\Omega$ and~$\Omega^{\opp}$). 
 \item\label{item3_thm:positive} One has $S = U_{\Theta}^{\opp >0} \cdot L_{\Theta}^{\circ} \cdot U_{\Theta}^{>0}$.
 \item\label{item4_thm:positive} One has $ G_{\Theta}^{\geq 0} S \subset S$.  
 \item\label{item5_thm:positive} The set~$S$ is a semigroup.
 \item\label{item6_thm:positive} The non-negative semigroup~$G_{\Theta}^{\geq 0}$ is the closure of~$S$.
 \item\label{item7_thm:positive} One has $G_{\Theta}^{\geq 0} = U_\Theta^{\geq 0} \cdot L_{\Theta}^{\circ} \cdot U_{\Theta}^{\opp \geq 0} =  U_{\Theta}^{\opp \geq 0} \cdot L_{\Theta}^{\circ}\cdot U_\Theta^{\geq 0}$. 
 \item \label{item8_thm:positive} The following equalities hold $G_{\Theta}^{\geq 0}\cap (\Omega\cap \Omega^{\opp}) = S $ and \(S=G_{\Theta}^{>0}\).
 \end{enumerate} 
\end{theorem}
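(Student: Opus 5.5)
I would prove the nine items essentially in the order listed, using only the diamond formalism of \cite{GW_pos,GLW} and the $\Theta$-principal $\mathfrak{sl}_2$.

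\textbf{Items (1)--(2).} For~(\ref{item1_thm:positiie}), write $s=u\ell v$ with $u\in U_\Theta^{>0}$, $\ell\in L_\Theta^\circ$, $v\in U_\Theta^{\opp>0}$. Then $s\cdot p'_\Theta=u\ell\cdot p'_\Theta=u\cdot p'_\Theta\in D_\Theta$, since $v$ and $\ell$ fix $p'_\Theta$. For $s\cdot p_\Theta$, note that $v\cdot p_\Theta\in D_\Theta$ and that $\ell$ preserves $D_\Theta$, because $L_\Theta^\circ$ normalizes $U^{\opp>0}_\Theta$. Finally, $u$ maps $D_\Theta$ into itself: $U_\Theta^{>0}$ fixes $p_\Theta$, so $uD_\Theta$ is a diamond with extremities $p_\Theta$ and $u\cdot p'_\Theta\in D_\Theta$. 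The nesting property of positive quadruples then gives $u\cdot \overline{D}_\Theta\subset D_\Theta\cup\{p_\Theta\}$, so $u D_\Theta\subset D_\Theta$.

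For~(\ref{item1bis_thm:positive}), take $s_n=u_n\ell_nv_n\to g$. Then $g\cdot p'_\Theta=\lim u_n\cdot p'_\Theta$ lies in $\overline{D}_\Theta$, and by (\ref{item1_thm:positiie}) the same holds for $g\cdot p_\Theta$. I need $g\in\Omega\cap\Omega^\opp$ in order to decompose $g$ as $u\ell v$, which is not automatic. Instead I would use properness of the parametrization $U_\Theta^{\geq 0}\to \overline{D}_\Theta\smallsetminus\{p_\Theta\}$. If $u_n$ stays bounded, pass to a subsequence with $u_n\to u\in U_\Theta^{\geq0}$ (closed), and then $\ell_n v_n=u_n^{-1}s_n$ converges. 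A limit of elements of $L_\Theta^\circ U_\Theta^{\opp>0}\subset P^\opp_\Theta$ lies in $P^\opp_\Theta$, so it equals $\ell v$, with $\ell\in L_\Theta^\circ$ by connectedness and $v\in U^{\opp\geq0}_\Theta$. To rule out $u_n\to\infty$, I would use the symmetric decomposition: the point $g^{-1}\cdot p_\Theta$ is a limit of $v_n^{-1}\ell_n^{-1}\cdot p_\Theta$, which lies in $D^\vee_\Theta$. Combining this with the transversality of $g\cdot p'_\Theta$ and $g\cdot p_\Theta$ should bound both factors. This step is delicate and may need to wait until after~(\ref{item2_thm:positive}).

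\textbf{Item (3).} I would show that $S$ is open and closed in $\Omega\cap\Omega^\opp$. Openness holds because the product map is a diffeomorphism onto an open set, and $U_\Theta^{>0}$, $U_\Theta^{\opp>0}$ are open. Connectedness is clear. For closedness in $\Omega\cap\Omega^\opp$, an element $g=u\ell v$ in the closure has $u\in U^{\geq 0}_\Theta$, $v\in U_\Theta^{\opp\geq0}$ by~(\ref{item1bis_thm:positive}). Moreover $g\in\Omega^\opp$ forces $u\cdot p'_\Theta=g\cdot p'_\Theta$ to be transverse to $p'_\Theta$. A point of $\overline{D}_\Theta$ transverse to both extremities lies in $D_\Theta$, so $u\in U_\Theta^{>0}$. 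The same argument with $g\in\Omega$ gives $v\in U^{\opp>0}_\Theta$.

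\textbf{Item (4) and the rest.} For~(\ref{item3_thm:positive}), $S'=U^{\opp>0}_\Theta L^\circ_\Theta U^{>0}_\Theta$ is by the same argument also a component of $\Omega\cap\Omega^\opp$. It suffices to exhibit one common point. Lemma~\ref{lem_braid_sl2} does exactly this: $x(t)y(t)\in S$ equals $y(\cdot)\check\chi(\cdot)x(\cdot)\in S'$ for $t>0$.

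For~(\ref{item4_thm:positive}), it suffices to check $gS\subset S$ for the generators $g$ of $G^{\geq0}_\Theta$. For $g\in U_\Theta^{\geq0}$ or $L_\Theta^\circ$ this is immediate from the first form of $S$. For $g\in U^{\opp\geq0}_\Theta$ I would use the second form from~(\ref{item3_thm:positive}), since $U^{\opp\geq0}_\Theta U^{\opp>0}_\Theta\subset U^{\opp>0}_\Theta$.

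Item~(\ref{item5_thm:positive}) follows because $S\subset G^{\geq0}_\Theta$. For~(\ref{item6_thm:positive}), every generator is a limit of elements of $S$, e.g.\ $u=\lim u\,x(\epsilon)y(\epsilon)$. Together with~(\ref{item4_thm:positive}) and closedness, this gives $G^{\geq0}_\Theta\subset\overline S$, while $S\subset G^{\geq0}_\Theta$ gives the reverse inclusion. Item~(\ref{item7_thm:positive}) then follows from~(\ref{item1bis_thm:positive}) and its mirror version.

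For~(\ref{item8_thm:positive}), an element of $G^{\geq0}_\Theta\cap\Omega\cap\Omega^\opp$ has the form $u\ell v$ with non-negative factors. Transversality forces $u,v$ to be positive as in~(\ref{item2_thm:positive}), so it lies in $S$. Since $S$ is open, $S\subset G^{>0}_\Theta$. Conversely, an interior point $g$ satisfies $g\,x(-\epsilon)\in G^{\geq0}_\Theta$ for small $\epsilon>0$, hence $g\in G^{\geq0}_\Theta S\subset S$ by~(\ref{item4_thm:positive}).

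The main obstacle is item~(\ref{item1bis_thm:positive}), i.e.\ controlling limits of $u_n\ell_nv_n$ that might escape $\Omega\cap\Omega^\opp$. My first attempt would be the properness argument on $\overline{D}_\Theta$ sketched above.
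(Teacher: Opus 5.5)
\textbf{Item~(2) is a genuine gap, and the rest of the proof depends on it.} Your closedness argument in (3) uses the decomposition $g=u\ell v$ with non-negative factors, and so do (7)--(9). So postponing (2) until after (3) would be circular.

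What (2) actually needs is that $g\cdot p'_\Theta$ is transverse to $p_\Theta$; membership in $\Omega\cap\Omega^{\opp}$ is not the relevant condition. Once this transversality holds, $g\cdot p'_\Theta=u\cdot p'_\Theta$ for a unique $u\in U_\Theta$. Since $u\mapsto u\cdot p'_\Theta$ is a diffeomorphism of $U_\Theta$ onto the chart of points transverse to $p_\Theta$, you get $u_n\to u$ directly, with no subsequences. Then $u\in U^{\geq 0}_\Theta$, and the rest of your argument goes through.

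Your proposed mechanism cannot supply this transversality. The map $U^{\geq 0}_\Theta\to\overline{D}_\Theta\smallsetminus\{p_\Theta\}$ is not proper in higher rank, because $\overline{D}_\Theta$ contains many points besides $p_\Theta$ that are not transverse to $p_\Theta$. For $\Sp_4(\R)$ with $\Theta=\{\alpha_2\}$, take $M_t=\mathrm{diag}(t,1)$. The points $\bigl(\begin{smallmatrix}\Id & M_t\\ 0&\Id\end{smallmatrix}\bigr)\cdot p'_\Theta$ lie in $D_\Theta$, and as $t\to\infty$ they converge to a Lagrangian that meets $p_\Theta$ in a line. So $u_n\to\infty$ is compatible with $u_n\cdot p'_\Theta$ converging inside $\overline{D}_\Theta\smallsetminus\{p_\Theta\}$.

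The ``symmetric'' information adds nothing either. The sequence $(v_n^{-1})$ converges if and only if $g^{-1}\cdot p_\Theta$ is transverse to $p'_\Theta$, which is the same condition. Knowing that $g^{-1}\cdot p_\Theta$ lies in the closure of $D^\vee_\Theta$ does not rule out non-transversality, for the same reason. The phrase ``should bound both factors'' is where the entire content of (2) lies.

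\textbf{The missing idea is monotonicity.} You need the monotonicity encoded in the positive quadruples $(p_\Theta,s_n\cdot p_\Theta,s_n\cdot p'_\Theta,p'_\Theta)$, together with the transversality of $g\cdot p_\Theta$ and $g\cdot p'_\Theta$. Your argument for (1) essentially gives these quadruples, since $s_n\cdot p'_\Theta=u_n\cdot p'_\Theta$ and $s_n\cdot p_\Theta=u_nw_n\cdot p'_\Theta$ with $w_n\in U^{>0}_\Theta$. The paper proceeds as follows.
\begin{enumerate}
\item Choose $x,y\in D^\vee_\Theta$ with $(p'_\Theta,x,y,p_\Theta)$ positive, so that $\overline{D}_\Theta\subset D(x,y)$.
\item Inside $D(x,y)$, take the unique diamonds $D(x,s_n\cdot p_\Theta)\subset D(x,s_n\cdot p'_\Theta)$ and pass to the limit.
\item This gives that $(x,g\cdot p_\Theta,g\cdot p'_\Theta,y)$ is positive, so $g\cdot p'_\Theta\in D^\vee(x,g\cdot p_\Theta)$.
\item Also $p_\Theta\in\overline{D}(x,g\cdot p_\Theta)$.
\item Every point of a closed diamond is transverse to every point of the opposite open diamond, so $p_\Theta$ is transverse to $g\cdot p'_\Theta$.
\end{enumerate}
In rank one this is just the inequality $\infty\geq g\cdot p_\Theta>g\cdot p'_\Theta\geq 0$ on $\R\mathbb{P}^1$. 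In general, the nesting of diamonds replaces the order.

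\textbf{Smaller slips.}
\begin{itemize}
\item In (1), the claim $u\cdot\overline{D}_\Theta\subset D_\Theta\cup\{p_\Theta\}$ is false for the same reason: $u$ fixes $p_\Theta$ and moves the limit Lagrangian above to another point meeting $p_\Theta$. However, $uD_\Theta\subset D_\Theta$ follows at once from $D_\Theta=U^{>0}_\Theta\cdot p'_\Theta$ and the semigroup property.
\item In (7), $S\subset G^{\geq0}_\Theta$ only gives $\overline{S}\subset\overline{G^{\geq 0}_\Theta}$. Closedness of $G^{\geq 0}_\Theta$ is an output of the theorem, so you need (2) here.
\item In (9), $x(\epsilon)\notin S$. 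Use $x(\epsilon)y(\epsilon)\in S$ instead. Your route to $G^{>0}_\Theta\subset S$ via $G^{\geq0}_\Theta S\subset S$ then works.
\end{itemize}
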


The proof of Theorem~\ref{thm:positive} relies on similar properties for the positive unipotent semigroups, that were established in \cite[Section~10]{GW_pos}. 
\begin{proof}
We prove the statements one by one. 

\begin{enumerate}[leftmargin=0pt]
\item Let $s = u\ell v$ with $u\in U_\Theta^{> 0} $, $\ell \in L_{\Theta}^{\circ}$, $v \in U_{\Theta}^{\opp > 0}$. Then $s \cdot p'_\Theta = u \cdot p'_\Theta$ lies in $D_\Theta$ since $D_\Theta = U_\Theta^{> 0} \cdot p'_\Theta$. 
From the equality $D_\Theta = U_{\Theta}^{\opp > 0} \cdot  p_\Theta$, we have $v \cdot p_\Theta \in D_\Theta$; since $D_\Theta$ is $L_{\Theta}^{\circ}$-invariant, we have also $\ell v\cdot p_\Theta \in D_\Theta$. Thus, 
$\ell v\cdot p_\Theta = w\cdot p'_\Theta$ for some $w \in  U_\Theta^{> 0}$, and since $U_\Theta^{> 0}$ is a semigroup we conclude that $u\ell v \cdot p_\Theta = uw\cdot p'_\Theta$ lies in $D_\Theta$. 

\item Let \(g\) be in \(\overline{S}\).  There exists \((s_n)_{n\in \N}\) a sequence in~\(S\) converging to~\(g\).  The sequence \((s_n\cdot p_\Theta)_{n\in\N}\) converges to \(g\cdot p_\Theta\).  Since, for all~\(n\), \(s_n\cdot p_\Theta\) belongs to the standard diamond~\(D_\Theta\), one has hence that \(g\cdot p_\Theta\) belongs to \(\overline{D}_\Theta\).  Similarly, \(g\cdot p_{\Theta}^{\prime}\) belongs to \(\overline{D}_\Theta\).

Let~\(x\) and~\(y\) be in the standard opposite diamond \(D_{\Theta}^{\vee}\) and such that the quadruple \((p_{\Theta}^{\prime}, x,y, p_\Theta)\) is positive.  Let \(D(x,y)\) be the diamond with extremities~\(x\), \(y\) containing \(D_\Theta\).  One has also \(\overline{D}_\Theta\subset D(x,y)\) (\cf Section~\ref{sec_diamonds})
and thus the elements \(g\cdot p_\Theta\) and \(g\cdot p_{\Theta}^{\prime}\) belongs to \(D(x,y)\).  This means that the triples \((x, g\cdot p_{\Theta}, y)\) and \((x, g\cdot p_{\Theta}^{\prime}, y)\) are positive.  There are thus unique diamonds \(D(x, g\cdot p_\Theta)\) and \(D(x, g\cdot p_{\Theta}^{\prime})\) with extremities \(x\), \(g\cdot p_{\Theta}\) (respectively \(x\), \(g\cdot p_{\Theta}^{\prime}\)) and contained in \(D(x,y)\).

Likewise, for all~\(n\) in~\(\N\), there are unique diamonds \(D(x, s_n\cdot p_\Theta)\) and \(D(x, s_n\cdot p_{\Theta}^{\prime})\) with extremities \(x\), \(s_n\cdot p_{\Theta}\) (respectively \(x\), \(s_n\cdot p_{\Theta}^{\prime}\)) and contained in \(D(x,y)\).  One has 
 \(D(x, g\cdot p_\Theta) = \lim_{n\to \infty} D(x, s_n \cdot p_\Theta)\) and \(D(x, g\cdot p_{\Theta}^{\prime}) = \lim_{n\to \infty} D(x, s_n \cdot p_{\Theta}^{\prime})\).

Since, for all~\(n\), the sextuple \((x, p_\Theta,s_n\cdot p_\Theta,s_n\cdot p_{\Theta}^{\prime},p_{\Theta}^{\prime}, y)\) is positive,
the quadruple \((x,s_n\cdot p_\Theta,s_n\cdot p_{\Theta}^{\prime}, y)\) is positive
and \(D(x, s_n\cdot p_\Theta) \subset D(x, s_n\cdot p_{\Theta}^{\prime})\).  Taking the limit, one gets  \(\overline{D}(x, g\cdot p_\Theta) \subset \overline{D}(x, g\cdot p_{\Theta}^{\prime})\) and, taking the interiors of these diamonds, one gets \(D(x, g\cdot p_\Theta) \subset D(x, g\cdot p_{\Theta}^{\prime})\); this means that the quadruple \((x, g\cdot p_\Theta, g\cdot p_{\Theta}^{\prime}, y)\) is positive.

For all~\(n\) in~\(\N\), the point~\(p_\Theta\) belongs to \(D(x, s_n\cdot p_\Theta)\).  Therefore \(p_\Theta\)~belongs to \(\overline{D}(x, g\cdot p_\Theta)\).  Furthermore, since  the quadruple \((x, g\cdot p_\Theta, g\cdot p_{\Theta}^{\prime}, y)\) is positive, the point \(g\cdot p_{\Theta}^{\prime}\) belongs to the opposite diamond \(D^{\vee}(x, g\cdot p_\Theta)\).  These facts together imply that \(p_\Theta\)~is transverse to~\(g\cdot p_{\Theta}^{\prime}\).

There is hence a unique~\(u\) in \(U_{\Theta}\) such that \(u\cdot p_{\Theta}^{\prime} = g\cdot p_{\Theta}^{\prime}\).  Thus, \(u^{-1}g\) belongs to \(P_{\Theta}^{\opp}= L_\Theta U_{\Theta}^{\opp}\): there are unique elements \(\ell\in L_\Theta\) and \(v\in U_{\Theta}^{\opp}\) such that \(g=u\ell v\).  For all~\(n\) in~\(\N\), let us denote \((u_n, \ell_n, v_n)\) the unique element of \(U_{\Theta}^{>0} \times L_{\Theta}^{\circ} \times U_{\Theta}^{\opp>0}\) such that \(s_n = u_n \ell_n v_n\).  Since, for all~\(n\), one has \(s_n\cdot p_{\Theta}^{\prime} = u_n \cdot p_{\Theta}^{\prime}\), since the sequence \(( s_n\cdot p_{\Theta}^{\prime})_{n\in \N}\) converges to \(g\cdot p_{\Theta}^{\prime}= u\cdot p_{\Theta}^{\prime}\), one gets that the sequence \((u_n \cdot p_{\Theta}^{\prime})_{n\in\N}\) converges to \(u\cdot p_{\Theta}^{\prime}\).
 Since the map \(U_\Theta \to \mathsf{F}_\Theta\mid u\mapsto u\cdot p_{\Theta}^{\prime}\) is a diffeomorphism onto its image (this is the map \(f^{\opp}|_{U_\Theta}\) of Section~\ref{sec_flag_var}), one obtains that the sequence \((u_n)_{n\in\N}\) converges to~\(u\).  This implies that \(u\)~belongs to \(U_{\Theta}^{\geq 0}\).  Looking at the sequence 
\((s_{n}^{-1}\cdot p_\Theta)\) one deduces similarly that the sequence \((v_{n}^{-1})_{n\in\N}\) converges to~\(v^{-1}\), hence that the sequence \((v_{n})_{n\in\N}\) converges to~\(v\) and that \(v\)~belongs to \(U_{\Theta}^{\opp\geq 0}\).  From that it follows that the sequence \((\ell_n)_{n\in\N}\) converges to~\(\ell\) and that the element~\(\ell\) belongs to~\(L_{\Theta}^{\circ}\).  

We obtained that \(g\) belongs to~\(U_{\Theta}^{\geq 0} L_{\Theta}^{\circ} U_{\Theta}^{\opp\geq 0}\) and thus that \(\overline{S}\)~is contained in \(U_{\Theta}^{\geq 0} L_{\Theta}^{\circ} U_{\Theta}^{\opp\geq 0}\).

\item Let~\(g\) be an element in~$S$.  There are $u \in U_\Theta^{> 0}$, $\ell\in L_{\Theta}^{\circ}$, and $v \in U_{\Theta}^{\opp >0}$ such that  $g = u \ell v$.

Recall that $g \in \Omega$ if and only if $f^{\opp}(g)$ is transverse to ${p}_{\Theta}^{\prime}$, and $g \in \Omega^{\opp}$ if and only if $f(g)$ is transverse to ${p}_\Theta$ (the maps~\(f\) and \(f^\opp\) have been introduced in Section~\ref{sec_flag_var}).
Let us consider $f^{\opp}(g) = f^{\opp}(u\ell v) = u\cdot {p}_{\Theta}^{\prime}$.  Since $u \in U_{\Theta}^{> 0}$, this point is transverse to ${p}_{\Theta}^{\prime}$ by \cite[Corollary~10.19]{GW_pos}.  Hence \(g\)~belongs to~$\Omega$.  Similarly \(g\)~belongs to~$\Omega^{\opp}$. 
 
Thus, $S \subset \Omega\cap \Omega^{\opp}$. Since $S$~is connected, it is contained in one connected component.  Furthermore, since the product map is a diffeomorphism on its image and has full rank (\cf Section~\ref{sec_open_cell}), \(S\)~is open in~\(G\) and it is thus open in \(\Omega\cap \Omega^{\opp}\).
So we only have to show that $S$~is closed in $\Omega\cap \Omega^{\opp}$ to deduce that $S$~is a full connected component.

Let thus~\(g\) be in \(\overline{S}\cap \Omega \cap \Omega^\opp\).  Since \(g\in \overline{S}\), by point~(\ref{item1bis_thm:positive}) there is \(u\in U_{\Theta}^{\geq 0}\), \(\ell \in L_{\Theta}^{\circ}\), and \(v\in U_{\Theta}^{\opp \geq 0}\) such that \(g=u \ell v\).  Since \(g\in \Omega^\opp\), \(g\cdot p_{\Theta}^{\prime}\) is transverse to~\(p_{\Theta}^{\prime}\).  Since \(g\cdot p_{\Theta}^{\prime} = u\cdot p_{\Theta}^{\prime}\), we obtained that \(u\)~belongs to \(U_{\Theta}^{>0}\) \cite[Section~13.1]{GW_pos}.  Similarly, \(v\)~belongs to \(U_{\Theta}^{\opp >0}\) and we conclude that \(g\)~belongs to~\(S\).

\item 
Applying the same argument as in the proof of~(\ref{item2_thm:positive}) to the subset $S^{\opp}= U_{\Theta}^{\opp >0} \cdot L_{\Theta}^{\circ} \cdot U_{\Theta}^{>0}$, we obtain that $S^{\opp}$ is also a connected component of $\Omega\cap \Omega^{\opp}$.  We thus only have to show that \(S\) and~\(S^\opp\) intersect; 
this is provided by the equality of Lemma~\ref{lem_braid_sl2} since, for \(t>0\), the elements \(x(t)\), \(y(t)\), and \(\check{\chi}(t)\) belong to \(U_{\Theta}^{>0}\), \(U_{\Theta}^{\opp >0}\), and \(L_{\Theta}^{\circ}\) respectively.

\item 
    Since \(G^{\geq 0}\) is generated by \(U_{\Theta}^{\geq 0}\), \(L_{\Theta}^{\circ}\), and \(U_{\Theta}^{\opp\geq 0}\), it is enough to prove the inclusions \(U_{\Theta}^{\geq 0} S \subset S\), \(L_{\Theta}^{\circ} S \subset S\), and \(U_{\Theta}^{\opp\geq 0} S \subset S\).  The first two inclusions are direct consequences of \(U_{\Theta}^{\geq 0}U_{\Theta}^{> 0} \subset U_{\Theta}^{> 0}\) \cite[Theorem~1.4]{GW_pos} and the stability of \(U_{\Theta}^{\geq 0}\) by conjugation by elements of~\(L_{\Theta}^{\circ}\).  The third inclusion follows from the inclusion \(U_{\Theta}^{\opp\geq 0}U_{\Theta}^{\opp> 0} \subset U_{\Theta}^{\opp> 0}\) and the equality \(S = U_{\Theta}^{\opp> 0} L_{\Theta}^{\circ} U_{\Theta}^{>0}\) established in~(\ref{item3_thm:positive}).

\item 
Since \(S\subset G^{\geq 0}\), this follows directly from the previous item.

\item 
By~(\ref{item1bis_thm:positive}), we have \(\overline{S}\subset G_{\Theta}^{\geq 0}\).
Taking closures in~(\ref{item4_thm:positive}), we obtain, for all~\(g\in G^{\geq 0}\) \(g \overline{S} \subset \overline{S}\) and hence \(g\in \overline{S}\) since the neutral element of~\(G\) belongs to \(\overline{S}\).  This gives the reverse inclusion \(G_{\Theta}^{\geq 0}\subset \overline{S}\) and hence the equality $\overline{S} = G_{\Theta}^{\geq 0}$. 

\item 
This is an immediate consequence of the previous item, thus  $G_{\Theta}^{\geq 0} = U_\Theta^{\geq 0} \cdot L_{\Theta}^{\circ}  \cdot U_{\Theta}^{\opp \geq 0} =  U_{\Theta}^{\opp \geq 0} \cdot L_{\Theta}^{\circ} \cdot U_\Theta^{\geq 0}$. 

\item 
By (\ref{item2_thm:positive}), we have \(S=\overline{S} \cap \Omega \cap \Omega^\opp\) and the first statement follows from the equality \(\overline{S}=G_{\Theta}^{\geq 0}\).  Furthermore, since \(S= U_{\Theta}^{>0} L_{\Theta}^{\circ} U_{\Theta}^{\opp>0}\) is the interior of \(\overline{S}= U_{\Theta}^{\geq 0} L_{\Theta}^{\circ} U_{\Theta}^{\opp\geq 0}=G_{\Theta}^{\geq 0}\), we get \(S=G_{\Theta}^{>0}\).
\qedhere
\end{enumerate}
\end{proof}

Theorem~\ref{thm:decomposition} is now a direct consequence of Theorem~\ref{thm:positive}.

\begin{corollary}\label{cor:positive}
Let $g$ be an element in $G_\Theta^{>0}$. Then: 
\begin{enumerate}
\item The triple $({p}_\Theta, g\cdot {p}_\Theta, {p}_{\Theta}^{\prime})$ 
 is positive. 
\item The quadruple $({p}_\Theta, g\cdot {p}_\Theta, {p}_{\Theta}^{\prime}, g^{-1}\cdot {p}^{\prime}_{\Theta})$ is positive. 
\item The quadruple \(({p}_\Theta, g\cdot {p}_\Theta, g\cdot{p}_{\Theta}^{\prime}, {p}^{\prime}_{\Theta})\) is positive. 
\end{enumerate}
\end{corollary}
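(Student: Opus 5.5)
We use the decomposition $g=u\ell v$ with $u\in U_{\Theta}^{>0}$, $\ell\in L_{\Theta}^{\circ}$ and $v\in U_{\Theta}^{\opp>0}$ from Theorem~\ref{thm:positive}, together with the alternative decomposition $g=v'\ell' u'$ from item~(\ref{item3_thm:positive}). The three statements are then reduced to the diamond facts recalled in Section~\ref{sec_diamonds}.

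For (1), item~(\ref{item1_thm:positiie}) of Theorem~\ref{thm:positive} gives $g\cdot p_\Theta\in D_\Theta$. Since $D_\Theta$ has extremities $p_\Theta$ and $p'_\Theta$, which are transverse, the triple $(p_\Theta,g\cdot p_\Theta,p'_\Theta)$ is positive by definition.

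For (3), item~(\ref{item1_thm:positiie}) gives $g\cdot p_\Theta\in D_\Theta$ and $g\cdot p'_\Theta\in D_\Theta$. Using $g\cdot p'_\Theta=u\cdot p'_\Theta$ and $g\cdot p_\Theta=u\ell v\cdot p_\Theta$, it suffices to show that $(p_\Theta,\ell v\cdot p_\Theta,p'_\Theta,p'_\Theta)$ becomes positive after applying $u$. Since $u$ fixes $p_\Theta$, the quadruple in (3) is the image under $u$ of $(p_\Theta,\ell v\cdot p_\Theta,p'_\Theta,u^{-1}\cdot p'_\Theta)$, so we show the latter is positive. Here $\ell v\cdot p_\Theta\in D_\Theta$, because $v\cdot p_\Theta\in D_\Theta=U_{\Theta}^{\opp>0}\cdot p_\Theta$ and $D_\Theta$ is $L_\Theta^{\circ}$-invariant. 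Also $u^{-1}\cdot p'_\Theta\in (U_\Theta^{>0})^{-1}\cdot p'_\Theta=D_\Theta^\vee$. By the characterization of positive quadruples in Section~\ref{sec_diamonds}, with $D=D_\Theta$, which has extremities $p_\Theta,p'_\Theta$, this quadruple is positive. Positivity is $G^\circ$-invariant, so applying $u$ gives (3).

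For (2), the same computation applies with $g^{-1}\cdot p'_\Theta=v^{-1}\ell^{-1}u^{-1}\cdot p'_\Theta$. It is more convenient to use $g=v'\ell' u'$. Then $g\cdot p_\Theta=v'\cdot p_\Theta$ and $g^{-1}\cdot p'_\Theta=u'^{-1}\ell'^{-1}v'^{-1}\cdot p'_\Theta$. Since $v'$ fixes $p'_\Theta$, we have $g^{-1}\cdot p'_\Theta=u'^{-1}\ell'^{-1}\cdot p'_\Theta=u'^{-1}\cdot p'_\Theta\in D_\Theta^\vee$. Moreover $g\cdot p_\Theta=v'\cdot p_\Theta\in D_\Theta$. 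So $(p_\Theta,g\cdot p_\Theta,p'_\Theta,g^{-1}\cdot p'_\Theta)$ has its second entry in $D_\Theta$ and its fourth in $D_\Theta^\vee$, and is positive by the same characterization.

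The main obstacle is bookkeeping: choosing, in each case, the decomposition ($u\ell v$ or $v'\ell'u'$) under which the element fixes the relevant extremity. The second decomposition, item~(\ref{item3_thm:positive}), is essential for (2). The one point to check carefully is that the characterization of positive quadruples in Section~\ref{sec_diamonds} is used in the order (extremity, point of $D$, extremity, point of $D^\vee$). It applies directly with $D=D_\Theta$, whose opposite diamond is $D_\Theta^\vee$.
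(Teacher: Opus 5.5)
Your proof is correct and matches the argument the paper intends. The paper gives no separate proof and presents the corollary as a direct consequence of Theorem~\ref{thm:positive}, that is, of the decompositions $g=u\ell v=v'\ell'u'$ combined with $D_\Theta=U_\Theta^{>0}\cdot p'_\Theta=U_\Theta^{\opp>0}\cdot p_\Theta$, $D_\Theta^\vee=(U_\Theta^{>0})^{-1}\cdot p'_\Theta$ and the characterization of positive quadruples, exactly as you do. One small correction: in (3), drop the sentence about the degenerate quadruple $(p_\Theta,\ell v\cdot p_\Theta,p'_\Theta,p'_\Theta)$, since your next sentence, which writes the quadruple as $u\cdot(p_\Theta,\ell v\cdot p_\Theta,p'_\Theta,u^{-1}\cdot p'_\Theta)$, is the correct reduction.
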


\begin{corollary}
  \label{cor_proper}
  The product maps from \(U_{\Theta}^{\geq 0}\times L_{\Theta}^{\circ} \times U_{\Theta}^{\opp\geq 0}\) and  from \(U_{\Theta}^{\opp\geq 0}\times L_{\Theta}^{\circ} \times U_{\Theta}^{\geq 0}\) into \(G\) are proper with image equal to~\(G_{\Theta}^{\geq 0}\).
\end{corollary}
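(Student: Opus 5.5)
The image equality is already contained in Theorem~\ref{thm:positive}(\ref{item7_thm:positive}), so only properness needs proof. I treat the first map \(m\colon (u,\ell,v)\mapsto u\ell v\); the second follows by the symmetric argument, exchanging the roles of \(p_\Theta\) and \(p'_\Theta\), or by applying the Cartan involution. Properness onto \(G\) means that preimages of compact subsets of \(G\) are compact. Since \(U_{\Theta}^{\geq 0}\), \(L_{\Theta}^{\circ}\) and \(U_{\Theta}^{\opp\geq 0}\) are closed in \(U_\Theta\), \(L_\Theta\) and \(U_{\Theta}^{\opp}\), it is enough to prove the following. Let \((u_n,\ell_n,v_n)\) be a sequence in the domain such that \(g_n=u_n\ell_nv_n\) converges to some \(g\in G\). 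Then a subsequence of \((u_n,\ell_n,v_n)\) converges in \(U_\Theta\times L_\Theta\times U_{\Theta}^{\opp}\), and its limit then automatically lies in the closed domain.

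\textbf{Step 1.} First, \(g\) lies in \(G_\Theta^{\geq 0}\), because this set is closed. By the decomposition in Theorem~\ref{thm:positive}(\ref{item7_thm:positive}), write \(g=u\ell v\) with \(u\in U_\Theta^{\geq 0}\), \(\ell\in L_\Theta^\circ\) and \(v\in U_\Theta^{\opp\geq0}\). In particular \(g\cdot p'_\Theta=u\cdot p'_\Theta\) lies in the affine chart \(f^{\opp}(U_\Theta)\), which is the set of points transverse to \(p_\Theta\).

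\textbf{Step 2.} Next, \(g_n\cdot p'_\Theta=u_n\cdot p'_\Theta\) converges to \(g\cdot p'_\Theta=u\cdot p'_\Theta\). The map \(f^{\opp}|_{U_\Theta}\) is a diffeomorphism onto the open chart, and that chart contains the limit point. Hence \(u_n\to u\).

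\textbf{Step 3.} Similarly, \(g_n^{-1}\cdot p_\Theta=v_n^{-1}\cdot p_\Theta\) converges to \(g^{-1}\cdot p_\Theta=v^{-1}\cdot p_\Theta\). This limit lies in the chart \(f(U_\Theta^{\opp})\), and \(f|_{U_\Theta^{\opp}}\) is a diffeomorphism onto that chart, so \(v_n\to v\).

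\textbf{Step 4.} Finally, \(\ell_n=u_n^{-1}g_nv_n^{-1}\to u^{-1}gv^{-1}=\ell\). This gives convergence of the whole sequence, not just a subsequence, and proves properness.

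The only delicate point is making sure the limit flags \(g\cdot p'_\Theta\) and \(g^{-1}\cdot p_\Theta\) stay inside the charts where the parametrizations are homeomorphisms. The closedness of \(G_\Theta^{\geq0}\) together with its product decomposition supplies exactly this.
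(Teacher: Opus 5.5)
Your proof is correct and follows the paper's argument. The paper just notes that the image is the closed set \(G_{\Theta}^{\geq 0}=\overline{S}\), and implicitly uses that the product map is a homeomorphism onto its open image (Section~\ref{sec_open_cell}), so its restriction is a closed embedding and hence proper. Your Steps 2--4 spell out that continuity of the inverse through the charts \(f^{\opp}|_{U_\Theta}\) and \(f|_{U_{\Theta}^{\opp}}\), exactly as in item~(\ref{item1bis_thm:positive}) of Theorem~\ref{thm:positive}, with closedness of \(G_{\Theta}^{\geq 0}\) keeping the limit in the big cell.
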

\begin{proof}
  This comes from the fact that the images of these maps are equal to \(G_{\Theta}^{\geq 0}=\overline{S}\) that is closed in~\(G\).
\end{proof}

\begin{corollary}
  For all~\(g\) and~\(h\) in \(G_{\Theta}^{>0}\), one has \(G_{\Theta}^{\geq 0}\subset g^{-1} G_{\Theta}^{>0} h^{-1}\).

  Let \((g_n)_{n\in\N}\) and \((h_n)_{n\in\N}\) be sequences in \(G_{\Theta}^{>0}\) that converge to~\(e_G\), then one has \(G_{\Theta}^{\geq 0}= \bigcap_{n\in \N} g^{-1}_{n} G_{\Theta}^{>0} h^{-1}_{n}\).
\end{corollary}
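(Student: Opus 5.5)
The first statement reduces to the openness and semigroup properties of \(G_{\Theta}^{>0}=S\) together with \(G_{\Theta}^{\geq 0}S\subset S\) from Theorem~\ref{thm:positive}. The second statement then follows from the first plus the closedness of \(G_{\Theta}^{\geq 0}\).

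For the first claim, let \(x\in G_{\Theta}^{\geq 0}\) and \(g,h\in G_{\Theta}^{>0}\). We want \(gxh\in G_{\Theta}^{>0}\), since then \(x=g^{-1}(gxh)h^{-1}\in g^{-1}G_{\Theta}^{>0}h^{-1}\). By item~(\ref{item4_thm:positive}), \(xh\in G_{\Theta}^{\geq 0}S\subset S=G_{\Theta}^{>0}\). Then \(g(xh)\in G_{\Theta}^{>0}G_{\Theta}^{>0}\subset G_{\Theta}^{>0}\) by item~(\ref{item5_thm:positive}) (or again by item~(\ref{item4_thm:positive}), since \(G_{\Theta}^{>0}\subset G_{\Theta}^{\geq 0}\)). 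This gives the first inclusion directly.

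For the second claim, the inclusion \(G_{\Theta}^{\geq 0}\subset\bigcap_n g_n^{-1}G_{\Theta}^{>0}h_n^{-1}\) follows from the first part applied to each \(n\). For the reverse inclusion, let \(x\) lie in the intersection. Then for every \(n\) we have \(g_nxh_n\in G_{\Theta}^{>0}\subset G_{\Theta}^{\geq 0}\). Since \(g_n\to e_G\) and \(h_n\to e_G\), continuity of multiplication gives \(g_nxh_n\to x\). Because \(G_{\Theta}^{\geq 0}=\overline{S}\) is closed (item~(\ref{item6_thm:positive})), it follows that \(x\in G_{\Theta}^{\geq 0}\).

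No step is a serious obstacle. The one point to check is that sequences \(g_n,h_n\in G_{\Theta}^{>0}\) converging to \(e_G\) actually exist, since otherwise the second statement would be vacuous. They do, because \(e_G\in\overline{S}\): for instance \(x(1/n)\,y(1/n)\in S\) tends to \(e_G\). This existence is not needed for the proof itself, only to show the statement has content.
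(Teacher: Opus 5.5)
Your proof is correct and follows essentially the same route as the paper: the first inclusion comes from \(G_{\Theta}^{\geq 0}G_{\Theta}^{>0}\subset G_{\Theta}^{>0}\) and the semigroup property, and the reverse inclusion in the second statement comes from \(g_n x h_n\to x\) together with the closedness of \(G_{\Theta}^{\geq 0}=\overline{G_{\Theta}^{>0}}\). The openness of \(G_{\Theta}^{>0}\) that you mention at the start is never actually used.
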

\begin{proof}
  The first statement is a direct consequence of the semigroup properties~\(G_{\Theta}^{>0}\) and~\(G_{\Theta}^{\geq 0}\).

  Let \((g_n)_{n\in\N}\) and \((h_n)_{n\in\N}\) be sequences in \(G_{\Theta}^{>0}\) that converge to~\(e_G\).  By the first assertion one has the inclusion \(G_{\Theta}^{\geq 0}\subset \bigcap_{n\in \N} g^{-1}_{n} G_{\Theta}^{>0} h^{-1}_{n}\).  Let~\(g\) be in \(\bigcap_{n\in \N} g^{-1}_{n} G_{\Theta}^{>0} h^{-1}_{n}\).  For all~\(n\) in~\(\N\), let~\(f_n\) be the element of \(G_{\Theta}^{>0}\) such that \(g=g_{n}^{-1}f_n h_{n}^{-1}\).  Since \((g_n)_{n\in\N}\) and \((h_n)_{n\in\N}\) converge to~\(e_G\), the sequence \((f_n)_{n\in\N}\) converges to~\(g\).  Since \(G_{\Theta}^{\geq 0}\) is the closure of~\(G_{\Theta}^{>0}\), we get that \(g\)~belongs to~\(G_{\Theta}^{\geq 0}\).
\end{proof}

\section{Proximality in the positive semigroup}
\label{section4}

In this section we establish several properties for elements in the positive semigroup of~$G$.  

\begin{theorem}\label{thm:proximal}
Let $g \in G_{\Theta}^{>0}$. Then: 
\begin{enumerate}
  \item\label{item1_thm_proximal} The element~$g$ acts on~$\mathsf {F}_{\Theta}$ proximally.  Its attracting fixed point~\(g^+\) belongs to the standard diamond~\(D_\Theta\); its repelling fixed point~\(g^-\) belongs to the standard opposite diamond~\(D_{\Theta}^{\vee}\). 
  \item\label{item2_thm_proximal} The sequences
   \((g^n\cdot p_\Theta)_{n\in \N}\) and \((g^{-n}\cdot p_\Theta)_{n\in \N}\), as well as the sequences  \((g^n\cdot p_{\Theta}^{\prime})_{n\in \N}\) and \((g^{-n}\cdot p_{\Theta}^{\prime})_{n\in \N}\) converge to~\(g^+\) and~\(g^-\) respectively.
  \item\label{item3_thm_proximal} The quadruple $(g^-, {p}_{\Theta},g\cdot {p}_{\Theta},g^+)$ is positive.
\end{enumerate} 
\end{theorem}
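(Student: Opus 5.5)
The plan is to use the nesting of diamonds under the action of~\(g\). By Theorem~\ref{thm:positive}(\ref{item1_thm:positiie}), \(g\cdot p_\Theta\) and \(g\cdot p'_\Theta\) lie in~\(D_\Theta\). By Corollary~\ref{cor:positive}(3), the quadruple \((p_\Theta, g\cdot p_\Theta, g\cdot p'_\Theta, p'_\Theta)\) is positive. Hence, by the facts recalled in Section~\ref{sec_diamonds}, the diamond \(g\cdot D_\Theta\), which has extremities \(g\cdot p_\Theta, g\cdot p'_\Theta\), satisfies
\[\overline{g\cdot D_\Theta}\subset D_\Theta.\]
The one point to check is that \(g\cdot D_\Theta\) is the unique diamond with those extremities contained in~\(D_\Theta\). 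I would verify this by noting that \(g\cdot D_\Theta = D(g p_\Theta, g p'_\Theta)\) contains \(g\cdot(\text{point of }D_\Theta)\), and that \(g D_\Theta\cap D_\Theta\neq\emptyset\) because \(G^{>0}_\Theta\) is a semigroup. For instance, \(g\cdot y\cdot p_\Theta\in D_\Theta\) for any \(y\in U^{\opp>0}_\Theta\), since \(gy\in G^{>0}_\Theta\).

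From the nesting, \(\overline{D}_\Theta\) is a compact set with \(g\cdot\overline{D}_\Theta\subset D_\Theta\). I would then show that \(g\) acts on \(D_\Theta\) as a strict contraction for a suitable \(g\)-invariant metric, namely a Hilbert-type/Finsler metric on diamonds in the spirit of~\cite{GLW}, under which strictly nested diamonds contract uniformly. This gives a unique fixed point \(g^+\in D_\Theta\) to which \(g^n\cdot x\) converges for every \(x\in\overline{D}_\Theta\), in particular for \(p_\Theta\) and \(p'_\Theta\).

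Applying the same argument to \(g^{-1}\) acting on the opposite diamond is awkward, since \(g^{-1}\notin G^{>0}_\Theta\). Instead I would use Corollary~\ref{cor:positive}(2) and the characterization of \(D^\vee\). Since \((p_\Theta, g p_\Theta, p'_\Theta, g^{-1}p'_\Theta)\) is positive, we have \(g^{-1}\cdot p'_\Theta\in D_\Theta^\vee\), and symmetrically \(g^{-1}\cdot p_\Theta\in D^\vee_\Theta\). The nesting then gives \(g^{-1}\cdot \overline{D^\vee_\Theta}\subset D_\Theta^\vee\); equivalently, the complement-type statement follows by applying \(g^{-1}\) to \(\overline{gD_\Theta}\subset D_\Theta\) and passing to opposite diamonds, since \((gD_\Theta)^\vee=gD_\Theta^\vee\) and nesting reverses for opposites. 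Contraction then yields \(g^-\in D^\vee_\Theta\) with \(g^{-n}p_\Theta, g^{-n}p'_\Theta\to g^-\).

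To obtain proximality, note that \(g^+\in D_\Theta\) and \(g^-\in D^\vee_\Theta\) are transverse. The derivative of~\(g\) at~\(g^+\) is contracting, because \(g\) maps a neighborhood (the diamond) strictly inside itself with a unique attracting point. Alternatively, the open set \(D_\Theta\) is contained in the basin of \(g^+\), and one argues via the fact recalled in Section~\ref{sec_flag_var}. This proves item~(\ref{item1_thm_proximal}) and item~(\ref{item2_thm_proximal}).

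For item~(\ref{item3_thm_proximal}), the diamonds \(g^n D_\Theta\) are nested and shrink to \(g^+\), while \(g^{-n}D^\vee_\Theta\) shrink to \(g^-\). Hence \(g^+\in g D_\Theta\subset D_\Theta\) and \(g^-\) lies in \(D^\vee_\Theta\subset (gD_\Theta)^\vee\), which gives positivity of \((g^-, p_\Theta, g p_\Theta, g^+)\) via the sextuple-type positivity statements of~\cite{GW_pos}. The main obstacle I expect is the contraction step, namely rigorously deducing a unique attracting fixed point from \(g\overline{D}_\Theta\subset D_\Theta\). My first attempt would be to invoke the corresponding lemma of~\cite{GLW} on nested diamonds, using the diamond-based proper metric.
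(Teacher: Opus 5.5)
There is a genuine gap at the decisive step: proximality.

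Your nesting \(\overline{g\cdot D_\Theta}\subset D_\Theta\) is fine. Note that your own example gives the full inclusion \(g\cdot D_\Theta = gU_{\Theta}^{\opp>0}\cdot p_\Theta\subset D_\Theta\). That inclusion is what identifies \(g\cdot D_\Theta\) with the diamond \(D(g\cdot p_\Theta, g\cdot p'_\Theta)\) contained in \(D_\Theta\); a mere nonempty intersection with \(D_\Theta\) would not.

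Everything afterwards rests on a metric you neither construct nor cite precisely. A \(g\)-invariant metric for which \(g\) is a strict contraction is self-contradictory. What you actually need is a \(G\)-equivariant family \(d_D\) of metrics on diamonds with \(d_D\le c\,d_{D'}\) on \(D'\), for some \(c<1\), whenever \(\overline{D'}\subset D\). Nothing in the paper provides this. Without it, the decreasing compact sets \(g^n\cdot\overline{D}_\Theta\) only shrink to a nonempty \(g\)-invariant compact set, which a priori could be a closed diamond rather than a point.

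Your fallback argument for a contracting derivative is also invalid. A fixed point that attracts an open set, or even the whole space, need not have contracting derivative. For example, a parabolic element of \(\PSL_2(\R)\) attracts every point of \(\R\mathbb{P}^1\), yet its derivative at the fixed point is \(1\). The criterion recalled in Section~\ref{sec_flag_var} is about the tangent action, so some genuinely linear input is required.

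The paper supplies that input as follows.

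1. It obtains the fixed points from \cite[Proposition~3.14]{GLW} on monotone sequences: \(x=\lim g^n\cdot p_\Theta\in D_\Theta\) and \(y=\lim g^{-n}\cdot p_\Theta\in D_\Theta^\vee\), with \((y,p_\Theta,g\cdot p_\Theta,x)\) positive.
2. It conjugates by \(h\) with \(h\cdot x=p_\Theta\) and \(h\cdot y=p'_\Theta\). Some power \(\ell\) of \(hgh^{-1}\) then lies in \(L_\Theta^\circ\). Applying the same lemma to \(g^k\), one has \(\ell u\ell^{-1}=vu\) for some \(u,v\in U_\Theta^{>0}\).
3. The degree-one projection is additive, so \(\Ad(\ell)e-e\in\bigoplus_{\alpha\in\Theta}\mathring{c}_\alpha\) for \(e=[u]_1=(e_\alpha)_\alpha\).
4. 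It compares the norms with unit balls \((e_\alpha-c_\alpha)\cap(-e_\alpha+c_\alpha)\) and their images under \(\Ad(\ell)\). This shows \(\Ad(\ell)\) is strictly dilating on each \(\mathfrak{u}_\alpha\).
5. By Kostant's generation theorem, \(\Ad(\ell)\) is then dilating on all of \(\mathfrak{u}_\Theta\cong T_{p'_\Theta}\mathsf{F}_\Theta\) (Lemma~\ref{lemma_pos_4uple_Lstar} and Proposition~\ref{prop_Lthetastar_prox}).

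This is your Hilbert/Birkhoff contraction intuition, but applied to the linear action on the cones \(c_\alpha\) after linearizing at the fixed points. There it is elementary and rigorous.

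Once proximality is known, items (2) and (3) follow roughly as you sketch, since \(p'_\Theta\) is transverse to \(g^{\mp}\). The paper also notes that item (1) could be deduced from \cite[Proposition~3.18]{GLW}. To take that route you would have to state and verify that result's hypotheses, rather than appeal to an unspecified contracting metric.
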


This result is essentially contained in \cite[Proposition~3.14 and Proposition~3.18]{GLW} even though the positive semigroup is not explicitly mentioned in \cite{GLW}. 
We first apply Proposition~3.14 from \cite{GLW}  in Lemma~\ref{lemma_iterates_CV} to establish properties (\ref{item2_thm_proximal}) and~(\ref{item3_thm_proximal}).  Property~(\ref{item2_thm_proximal}) will be generalized in Corollary~\ref{cor_action_on_diam} below.  One could then deduce (\ref{item1_thm_proximal}) from Proposition~3.18 in \cite{GLW}, but since we will show a stronger statement (see below Proposition~\ref{prop:posprox}), we will give the details of the proof. 

In order to formulate the stronger statement, let us recall that the diagonal action of $L_{\Theta}^{\circ}$ on the direct sum of cones $\bigoplus_{\alpha\in\Theta} \mathring{c}_{\alpha}$ is transitive with stabilizers being maximal compact subgroups, so $\bigoplus_{\alpha\in\Theta} \mathring{c}_{\alpha}$ is a model for the symmetric space associated to $L_{\Theta}^{\circ}$. 
Given an element $ e = (e_\alpha)_{\alpha\in \Theta} \in \prod_{\alpha\in\Theta} \mathring{c}_{\alpha}$, and $\ell \in L_{\Theta}^{\circ}$, we have that the element $\ell\cdot e=(\Ad(\ell)e_\alpha)_{\alpha\in \Theta}$ is in $\bigoplus_{\alpha\in\Theta} \mathring{c}_{\alpha}$. We set

\[L^{e}_{\Theta} \coloneq\bigl\{ \ell \in L^{\circ}_{\Theta} \mid \ell\cdot e - e \in \bigoplus_{\alpha\in\Theta} \mathring{c}_{\alpha}\bigr\}.\]
This is a subsemigroup of \(L_{\Theta}^{\circ}\): indeed, for \(\ell'\in L_{\Theta}^{\circ}\) and \(\ell\in L^{e}_{\Theta}\), the element \(\ell' \ell\cdot e-\ell'\cdot e\) is also in \(\bigoplus_{\alpha\in\Theta} \mathring{c}_{\alpha}\) being the image of \(\ell\cdot e-e\) by~\(\ell'\) and thus \(\ell' \ell\cdot e-e= (\ell' \ell\cdot e-\ell'\cdot e) + (\ell'\cdot e- e)\) belongs to \(\bigoplus_{\alpha\in\Theta} \mathring{c}_{\alpha}\) when \(\ell\) and~\(\ell'\) are in \(L_{\Theta}^{e}\).

We have $L_{\Theta}^{g\cdot e} = g (L_{\Theta}^{e})g^{-1}$ for every~\(g\) in~\(L_{\Theta}^{\circ}\).  Hence, since $L_\Theta^{\circ}$ acts transitively on $\bigoplus_{\alpha\in\Theta} \mathring{c}_{\alpha}$, the conjugacy class of the semigroups \(L^{e}_{\Theta}\) is well-defined, and we will denote by~$L_{\Theta}^{*}$ the subsemigroup with respect to $e = (E_\alpha)_{\alpha\in \Theta}$, where $E_\alpha$ are the elements in a $\Theta$-system (Section \ref{sec_theta_prin}).

We will prove in Section~\ref{sec:continuous map} the following proposition.
\begin{proposition}\label{prop:posprox}
Let $g \in G_{\Theta}^{>0}$, then $g$~is conjugate to an element in~$L_{\Theta}^{*}$. 
\end{proposition}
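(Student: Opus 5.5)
Our plan is to conjugate $g$ so that it fixes the pair $(p_\Theta,p'_\Theta)$, which places it in $L_\Theta$, and then to read off the semigroup condition from the contraction of $g$ on the tangent spaces of the diamond.

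\textbf{Step 1: proximality and the fixed points.} Let $g\in G_{\Theta}^{>0}$. By Theorem~\ref{thm:positive}, $g\overline{D}_\Theta\subset D_\Theta$: indeed $g\cdot p_\Theta$ and $g\cdot p'_\Theta$ lie in $D_\Theta$, and $g\cdot D_\Theta$ is the diamond with extremities $g p_\Theta, gp'_\Theta$. By item~(3) of Section~\ref{sec_diamonds}, applied to the positive quadruple $(p_\Theta,gp_\Theta,gp'_\Theta,p'_\Theta)$ of Corollary~\ref{cor:positive}, we get $g\overline{D}_\Theta\subset D_\Theta$. The nested compact sets $g^n\overline{D}_\Theta$ then shrink. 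We would invoke the contraction result of \cite[Prop.~3.14]{GLW} (Lemma~\ref{lemma_iterates_CV}) to conclude that their intersection is a single point $g^+\in D_\Theta$, and that $g$ contracts a neighbourhood of it. Applying the same argument to $g^{-1}$ acting on the opposite diamond gives $g^-\in D_\Theta^\vee$, and $g^-$ is transverse to $g^+$.

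\textbf{Step 2: conjugating into $L_\Theta^\circ$.} Choose $h\in G^\circ$ sending $(p_\Theta,p'_\Theta)$ to $(g^-,g^+)$, and take it in the $L_\Theta^\circ$-compatible family so that $h^{-1}D(g^-,g^+)=D_\Theta$. Here $D(g^-,g^+)$ is the diamond containing $p_\Theta$, $gp_\Theta$ (and its $g$-iterates), given by item~\ref{item3_thm_proximal}. Then $\ell=h^{-1}gh$ fixes $p_\Theta$ and $p'_\Theta$, so $\ell\in L_\Theta$. Moreover $\ell$ preserves $D_\Theta$, since $g$ preserves $D(g^-,g^+)$ by uniqueness of the diamond. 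The stabilizer of $D_\Theta$ in $L_\Theta$ preserves the cones $\bigoplus c_\alpha$, and we expect it to reduce to $L_\Theta^\circ$, up to elements that act trivially, which we absorb.

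\textbf{Step 3: the semigroup condition.} The point $p'_\Theta$ is now attracting for $\ell$. In the chart $X\mapsto\exp(X)\cdot p'_\Theta$, $X\in\mathfrak u_\Theta$, the action of $\ell$ is $\Ad(\ell)$, so $\Ad(\ell)^{-1}$ is expanding on $\mathfrak u_\Theta$. Pick a point $x\in D_\Theta$ on the $h$-transported orbit, say $x=h^{-1}gp_\Theta$. Then $\ell^{-1}x$ lies "further" inside the diamond, in the sense that the quadruple $(p_\Theta,\ell^{-1}x,x,p'_\Theta)$ is positive. In the unipotent parametrization $x=u\cdot p'_\Theta$ with $u\in U_\Theta^{>0}$, this positivity says that $\ell^{-1}u\ell\in u\,U_\Theta^{>0}$. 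Projecting to the abelianization $\bigoplus_{\alpha\in\Theta}\mathfrak u_\alpha$ (the first graded piece), and using that the projection of $U_\Theta^{>0}$ lands in $\bigoplus\mathring c_\alpha$, we get that $\Ad(\ell^{-1})e-e\in\bigoplus\mathring c_\alpha$, where $e$ is the projection of $u$. Hence $\ell^{-1}\in L_\Theta^{e}$ for some $e\in\bigoplus\mathring c_\alpha$. The direction is adjusted by swapping the roles of $p_\Theta,p'_\Theta$, or equivalently by conjugating by $\dot w_\Delta$, so that $\ell$ itself lies in $L_\Theta^{e}$. Conjugating by an element of $L_\Theta^\circ$ that moves $e$ to $(E_\alpha)$ then gives an element of $L_\Theta^*$.

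\textbf{Main obstacle.} We expect the hardest step to be Step 3: turning positivity of the quadruple into the strict cone inequality on the first graded piece, with the correct direction. A secondary difficulty is the claim in Step 2 that $\ell$ lies in $L_\Theta^\circ$ rather than merely in $L_\Theta$. For that, we would try a connectedness argument: $G_\Theta^{>0}$ is connected and the construction depends continuously on $g$, in a neighbourhood of $x(1)y(1)$, where the answer can be computed inside the principal $\mathfrak{sl}_2$ via Lemma~\ref{lem_braid_sl2}.
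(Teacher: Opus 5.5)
Your outline agrees with the paper at both ends. You move the fixed points of $g$ to $p_\Theta,p'_\Theta$, then push the positive quadruple $(g^-,p_\Theta,g\cdot p_\Theta,g^+)$ through the degree-one projection, which is Lemma~\ref{lemma_pos_4uple_Lstar}. Two small corrections. The orientation detour in Step~3 disappears if you send $(g^+,g^-)$ to $(p_\Theta,p'_\Theta)$, as the paper does. In Step~1 you should cite Theorem~\ref{thm:proximal}, not Lemma~\ref{lemma_iterates_CV}, which only gives convergence of $g^{\pm n}\cdot p_\Theta$.

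\emph{The gap is Step~2, the claim $\ell\in L_\Theta^\circ$,} which you need both for $L^e_\Theta$ to make sense and for the conclusion. The stabilizer argument cannot give it. In $\Sp_4(\R)$ with $\Theta=\{\alpha_2\}$, every $\mathrm{diag}(A,{A^T}^{-1})$ with $A\in\GL_2(\R)$ acts on the chart by $M\mapsto AMA^T$, so it preserves $D_\Theta$. Take $A=\mathrm{diag}(-2,2)$:
\begin{itemize}
\item it acts nontrivially on $\mathrm{Lag}(\R^4)$;
\item it satisfies $AA^T-\Id\in\mathrm{Sym}^{+}_{2}(\R)$;
\item its spectrum $\{\pm 2,\pm\tfrac12\}$ is not of the form $\{\mu_1,\mu_2,\mu_1^{-1},\mu_2^{-1}\}$ with $\mu_1\mu_2>0$, so it is not conjugate into $L_\Theta^\circ$.
\end{itemize}
So preserving $D_\Theta$ plus the strict cone inequality cannot exclude the other components of $L_\Theta$. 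Nor can you ``absorb'' trivially acting elements. In $\SL_2(\R)$ with $\Theta=\Delta$, $-\Id$ preserves $D_\Theta$ and acts trivially, yet $\mathrm{diag}(-\lambda,-\lambda^{-1})$ is not conjugate into $L_\Theta^\circ$. A conjugacy statement cannot be taken modulo the center. This is also why the paper's proof of proximality, which passes to the adjoint group and to a power $j^k$, does not by itself prove the proposition.

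The connectedness argument you list as a fallback is therefore the actual proof. It needs a conjugator that depends continuously on $g$ on all of $G_\Theta^{>0}$, not just near $x(1)y(1)$. An arbitrary $h$ with $h\cdot(p_\Theta,p'_\Theta)=(g^-,g^+)$ is only determined up to right multiplication by $L_\Theta\cap G^\circ$, so continuity means nothing until you normalize $h$. The paper normalizes as follows:
\begin{itemize}
\item $u_g\in U_\Theta$ is the unique element with $u_g\cdot g^-=p'_\Theta$, which exists because $g^-$ is transverse to $p_\Theta$;
\item $v_g\in U_\Theta^{\opp}$ is the unique element with $v_gu_g\cdot g^+=p_\Theta$.
\end{itemize}
Both are continuous because $g\mapsto g^\pm$ is. Hence $g\mapsto (v_gu_g)g(v_gu_g)^{-1}$ is a continuous map from the connected set $G_\Theta^{>0}$ into $L_\Theta$, and it stays in one component. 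The same connectedness shows that $v_gu_g\cdot p_\Theta$ lies in $D_\Theta$ rather than in another diamond with the same extremities. So $v_gu_g\cdot p_\Theta=w_g\cdot p'_\Theta$ with $w_g\in U_\Theta^{>0}$. A further normalization $s_g\in L_\Theta^\circ\cap U_\Delta$ moves $[w_g]_1$ to $(E_\alpha)_{\alpha\in\Theta}$. A base point, for example your $x(1)y(1)$ computed inside the principal $\mathfrak{sl}_2$, identifies the component as $L_\Theta^\circ$. Lemma~\ref{lemma_pos_4uple_Lstar}, i.e.\ your Step~3, then finishes the proof.
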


When $G$~is split and $\Theta= \Delta$, $L_{\Theta}^{\circ}$ is the connected component of the Cartan subgroup, \ie $L_{\Theta}^{\circ}  = \exp(\mathfrak{a})$.  In that case $L_{\Theta}^{*}$ is the subsemigroup of elements $\exp(X)$ where $\alpha(X)>0$ for all $\alpha \in \Delta$.

In the general case, the following proposition gives an explicit description of elements in \(L_{\Theta}^{*}\) with respect to their Cartan decomposition (Section~\ref{sec_struct_levi}). 
\begin{proposition}
Let $\ell\in L_\Theta^{\circ}$. Let \(X_\ell\) be the unique element in \(\bar{\mathfrak{a}}_{\Theta}^{+} \coloneq \{X\in \mathfrak{a} \mid \forall \alpha\in \Delta\smallsetminus \Theta,\, \alpha(X)\geq 0\},\) such that \(\ell\) belongs to \(K_\Theta  \exp(X_\ell) K_\Theta \).
Then \(\ell\) is in \(L_{\Theta}^{*}\) if and only if $\alpha(X_\ell) >0$ for all $\alpha \in \Theta$. 
\end{proposition}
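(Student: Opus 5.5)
The plan is to reduce membership in \(L_{\Theta}^{*}\) to a statement about \(\exp(X_\ell)\) alone, using the \(K_\Theta\)-invariance of the base point \(e=(E_\alpha)_{\alpha\in\Theta}\). Then we compute the action of \(\exp(X_\ell)\) on \(e\) root space by root space.

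\emph{Step 1: \(K_\Theta\) fixes \(e\).} We need that \(K_\Theta\) is contained in the stabilizer of \(e\) for the diagonal action of \(L_{\Theta}^{\circ}\) on \(\bigoplus_{\alpha\in\Theta}\mathring{c}_\alpha\). By the recalled transitivity statement, this stabilizer is a maximal compact subgroup of \(L_{\Theta}^{\circ}\). I would identify it with \(K_\Theta\) using the compatibility of the \(\Theta\)-system with the Cartan involution, namely \(F_\alpha\in\mathring{c}_\alpha^{\opp}=\tau(-c_\alpha)\) with \(F_\alpha=-\tau(E_\alpha)\) up to normalization. The relevant statement is in \cite[Section~5.5]{GW_pos}: the stabilizer of \(e\) is \(\tau\)-stable, and a \(\tau\)-stable maximal compact subgroup of \(L_{\Theta}^{\circ}\) is \(K_\Theta\). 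I expect this identification to be the main point requiring care. If it is not directly available, I would first try to check infinitesimally that \(\mathrm{ad}(\mathfrak{k}_\Theta)E_\alpha=0\), using the description (\ref{eq:max-compact-S_J}) of \(\mathfrak{k}_\Theta\) and the construction of \(E_\alpha\) from the root vectors \(X_{\gamma_i}\).

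\emph{Step 2: reduction to \(\exp(X_\ell)\).} Write \(\ell=k\exp(X_\ell)k'\) with \(k,k'\in K_\Theta\), and set \(C=\bigoplus_{\alpha\in\Theta}\mathring{c}_\alpha\). Since \(k'\cdot e=e\), we have \(\ell\cdot e-e=k\cdot(\exp(X_\ell)\cdot e-k^{-1}\cdot e)=k\cdot(\exp(X_\ell)\cdot e-e)\). The set \(C\) is \(L_{\Theta}^{\circ}\)-invariant, so \(\ell\in L_{\Theta}^{*}\) if and only if \(\exp(X_\ell)\cdot e-e\in C\). Because \(C\) is a product of cones, this condition can be checked one component \(\alpha\in\Theta\) at a time.

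\emph{Step 3: computation on each factor.} The element \(\exp(X)\), for \(X\in\mathfrak{a}\), acts on \(\mathfrak{g}_\gamma\) by multiplication by \(e^{\gamma(X)}\).
\begin{itemize}
\item If \(\alpha\) is not special, then \(E_\alpha=X_\alpha\) and \(c_\alpha=\R_{\geq0}X_\alpha\). Hence \(\exp(X_\ell)E_\alpha-E_\alpha=(e^{\alpha(X_\ell)}-1)X_\alpha\), which lies in \(\mathring{c}_\alpha\) if and only if \(\alpha(X_\ell)>0\).
\item If \(\alpha\) is special, then \(E_\alpha=\sum_{i=0}^{d}X_{\gamma_i}\) and the difference is \(\sum_i(e^{\gamma_i(X_\ell)}-1)X_{\gamma_i}\). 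By the characterization of \(\mathring{c}_\alpha\) recalled in Section~\ref{sec_theta_prin}, this lies in \(\mathring{c}_\alpha\) if and only if \(\gamma_i(X_\ell)>0\) for all \(i\). Each \(\gamma_i-\alpha\) is a non-negative combination of roots in \(\Delta\smallsetminus\Theta\), and \(X_\ell\in\bar{\mathfrak{a}}_{\Theta}^{+}\), so \(\gamma_i(X_\ell)\geq\alpha(X_\ell)\), with equality for \(i=0\). The condition is therefore again equivalent to \(\alpha(X_\ell)>0\).
\end{itemize}
Combining the two cases over all \(\alpha\in\Theta\) gives the proposition.
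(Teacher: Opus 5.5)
Your proposal is correct and follows essentially the same route as the paper. You use the $K_\Theta$-invariance of $e=(E_\alpha)_{\alpha\in\Theta}$ to reduce to $\exp(X_\ell)\cdot e-e$, and then compute $\Ad(\exp(X_\ell))$ on the root vectors $X_{\gamma_i}$, using that each $\gamma_i-\alpha$ is a non-negative combination of roots in $\Delta\smallsetminus\Theta$ in the special case. The only difference is that the paper simply asserts that $K_\Theta$ stabilizes $e$, whereas you flag this and propose checking $\mathrm{ad}(\mathfrak{k}_\Theta)E_\alpha=0$; that check would suffice, since $K_\Theta$ is connected.
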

\begin{proof}
Let~\(\ell\) be an element of \(L_{\Theta}^{\circ}\).  There are $k_1, k_2 \in K_\Theta $ and \(X\in \overline{\mathfrak{a}}^{+}_{\Theta}\) such that $\ell = k_1 \exp(X) k_2$.  Since the maximal compact subgroup $K_\Theta $ stabilizes  $e = (E_\alpha)_{\alpha\in \Theta}$, we have that 
\[\ell\cdot e-e = k_1 \exp(X) k_2\cdot e -e = k_1\cdot ( \exp(X)\cdot e - e). \]

So $\ell\in L_\Theta^{*}$ if and only if \( \exp(X)\cdot e - e\) lies in \(\prod_{\alpha\in\Theta} \mathring{c}_{\alpha}\).

We are thus left to characterize the elements \(X \in \bar{\mathfrak{a}}_{\Theta}^{+}\) such that \(\exp(X)\) belongs to \(L_\Theta^{*}\).  To determine this we use the explicit form of the elements 
$E_\alpha \in \mathring c_\alpha$, $\alpha \in \Theta$ (see Section~\ref{sec_theta_prin}). 

There are two cases to consider.  The first case is when $\mathfrak{u}_\alpha = \mathfrak{g}_\alpha$;  $E_\alpha$~is then the root vector in $\mathfrak{g}_\alpha$ and $\mathring{c}_\alpha = \R_{>0} x_\alpha$.  Since \(\Ad(\exp(X))X_\alpha = e^{\alpha(X)}X_\alpha\), one has $\Ad(\exp(X))E_\alpha - E_\alpha \in \mathring{c}_\alpha$ if and only if \(e^{\alpha(X)}>1\) that is, if $\alpha(X)>0$. 

In the second case, the element~$E_\alpha$ is the sum of the root vector $X_\alpha$ (in $\mathfrak{g}_{\alpha}$) and of the root vectors $X_{\gamma_i}$, $i= 1, \dots, d$ in \(\mathfrak{g}_{\gamma_i}\) (see Section~\ref{sec_theta_prin}).  One has 
\[\Ad( \exp(X))E_\alpha = \sum_{i=0}^{d} \Ad( \exp(X))X_{\gamma_i} = \sum_{i=0}^{d} e^{\gamma_i(X)}X_{\gamma_i}. \]

(Recall that \(\gamma_0=\alpha\)).
Since the intersection of~\(\mathring{c}_\alpha\) with the space generated by \((X_{\gamma_i})_{i=0,\dots, d}\) is the quadrant \(\sum_{i=0}^{d}\R_{>0} X_{\gamma_i}\), one has that \(\Ad( \exp(X))E_\alpha -E_\alpha\) belongs to~\(\mathring{c}_{\alpha}\) if and only if, for all~\(i=0,\dots, d\), \(\gamma_i(X)>0\) which is equivalent to having \(\alpha(X)>0\) by the fact that \(X\)~is an element of~\(\overline{\mathfrak{a}}_{\Theta}^{+}\) and the fact that, for all~\(i\) in \(\{1,\dots, d\}\), the linear form \(\gamma_i-\gamma_0\), being a non-negative linear combination of simple roots in \(\Delta\smallsetminus \Theta\), is non-negative in restriction to~\(\overline{\mathfrak{a}}_{\Theta}^{+}\).

This proves the proposition. 
\end{proof}

\subsection{Positive elements have fixed points}

We first establish the following lemma.

\begin{lemma}\label{lemma_iterates_CV}
  Let \(g\in G^{>0}_{\Theta}\).  Then the sequences \((g^n\cdot p_\Theta)_{n\in\N}\) and \((g^{-n}\cdot p_\Theta)_{n\in\N}\) are converging.  The elements \(x= \lim_{n\to +\infty} g^n\cdot p_\Theta\) and \(y= \lim_{n\to -\infty} g^n\cdot p_\Theta\) are fixed by~\(g\) and  the quadruple \((y, p_\Theta, g\cdot p_\Theta, x)\) is positive.
\end{lemma}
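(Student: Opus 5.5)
The plan is to build a nested sequence of diamonds from the quadruple in Corollary~\ref{cor:positive}, apply \cite[Proposition~3.14]{GLW} to get convergence of the iterates, and read off fixedness and positivity of the limit quadruple from continuity and nesting.

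\emph{Positive sequence.} By Corollary~\ref{cor:positive}, the quadruple $(p_\Theta, g\cdot p_\Theta, p'_\Theta, g^{-1}\cdot p'_\Theta)$ is positive. Applying $g^n$ shows that every quadruple
\[(g^n\cdot p_\Theta, g^{n+1}\cdot p_\Theta, g^n\cdot p'_\Theta, g^{n-1}\cdot p'_\Theta)\]
is positive. I also intend to use that the triple $(p_\Theta, g\cdot p_\Theta, g\cdot p'_\Theta)$ is positive, together with $G_\Theta^{\geq 0}G_\Theta^{>0}\subset G_\Theta^{>0}$. From these I aim to show that for every $n\geq 1$ the tuple
\[(g^{-n}\cdot p_\Theta,\dots,g^{-1}\cdot p_\Theta,\,p_\Theta,\,g\cdot p_\Theta,\dots,g^{n}\cdot p_\Theta,\,g^{n}\cdot p'_\Theta,\dots,p'_\Theta,\dots,g^{-n}\cdot p'_\Theta)\]
is positive, i.e.\ cyclically ordered in the sense of \cite{GW_pos}. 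The induction step uses $g^{k}\in G_\Theta^{>0}$ for all $k\geq 1$, since $G_\Theta^{>0}$ is a semigroup. By Theorem~\ref{thm:positive}(\ref{item1_thm:positiie}) we have $g^k\cdot p_\Theta,\, g^k\cdot p'_\Theta\in D_\Theta$, and the same argument for $g^{-1}$ acting on the opposite diamond places the negative iterates in $D_\Theta^\vee$. The cyclic order is then obtained by conjugating by $g^{-k}$ and using Corollary~\ref{cor:positive}(3).

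\emph{Nested diamonds.} Set $D_n = D(g^n\cdot p_\Theta, g^n\cdot p'_\Theta)$, the diamond containing $g^{n+1}\cdot p_\Theta$. Positivity of the quadruple $(g^n p_\Theta, g^{n+1}p_\Theta, g^{n+1}p'_\Theta, g^n p'_\Theta)$ and fact~(3) of Section~\ref{sec_diamonds} give $\overline{D}_{n+1}\subset D_n$, and $D_{n+1}=g D_n$. Proposition~3.14 of \cite{GLW} states that a strictly nested sequence of diamonds $g^n D_0$, with $\overline{D_1}\subset D_0$, shrinks to a single point; I will invoke it in that form. The intersection $\bigcap_n \overline{D}_n$ is therefore a single point $x$, and since $g^n\cdot p_\Theta\in \overline{D}_n$, we get $g^n\cdot p_\Theta\to x$. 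The point $x$ is $g$-invariant because $g\bigcap_n \overline D_n=\bigcap_n\overline D_{n+1}$. The same argument for $g^{-1}$ uses the opposite diamonds $D_n^\vee$, which nest as $n\to-\infty$; this gives convergence of $g^{-n}\cdot p_\Theta$ to a fixed point $y$.

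\emph{Positivity of the limit quadruple.} The point $x$ lies in $D_2\subset D_1$, hence in the open diamond $D(g\cdot p_\Theta, g\cdot p'_\Theta)$. Similarly $y$ lies in a diamond on the opposite side, with extremities $g^{-1}\cdot p_\Theta$ and $g^{-1}\cdot p'_\Theta$. Combining this with positivity of the finite tuples above, I will show that $(y,p_\Theta,g\cdot p_\Theta,x)$ is a sub-quadruple of a positive configuration. The endpoints are taken strictly inside, since $x\in D_2$ and $y\in D_{-2}^\vee$ are interior, so no limiting degeneration occurs.

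\emph{Main obstacle.} I expect the hardest step to be the clean verification of the nesting $\overline D_{n+1}\subset D_n$ together with checking that the hypotheses of \cite[Proposition~3.14]{GLW} match exactly. If that proposition only gives subsequence convergence, I would fall back on compactness. I would then use the fact that any two accumulation points both lie in every $\overline D_n$, and conclude that the intersection is a point by a contraction argument for the $\Theta$-principal $\mathfrak{sl}_2$ direction.
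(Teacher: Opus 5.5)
The gap is in the convergence step. Your setup through the nesting $\overline{D}_{n+1}\subset D_n$, with $D_n=g^n\cdot D_\Theta$, is fine. But you then need $\bigcap_n\overline{D}_n$ to be a single point, and nothing available at this stage gives that.

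It is not a property of nested diamonds as such. Already in $\R\mathbb{P}^1$, intervals with $\overline{D}_{n+1}\subset D_n$ can decrease to a nondegenerate closed interval. In the Lagrangian chart where $\overline{D}_\Theta=\{0\le M\le \Id\}$, one has $\overline{D}_n=\{M_n\le M\le N_n\}$, where $M_n$ (corresponding to $g^n\cdot p_\Theta$) increases and $N_n$ (corresponding to $g^n\cdot p'_\Theta$) decreases. So $\bigcap_n\overline{D}_n=\{M_\infty\le M\le N_\infty\}$, which is a point only if $M_\infty=N_\infty$.

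For this particular sequence the collapse does hold, but it is exactly Corollary~\ref{cor_action_on_diam}. The paper derives that corollary from the proximality of $g$ (Theorem~\ref{thm:proximal}), and the proof of proximality starts from the present lemma, so using it here is circular. Nor is it what \cite[Proposition~3.14]{GLW} provides. As the paper uses it, that result is a monotone convergence statement: a sequence satisfying its monotonicity hypothesis converges, with limit in the relevant open diamond. It does not say that diamonds collapse.

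Your fallback does not close the gap either. Compactness only places accumulation points in $\bigcap_n\overline{D}_n$, and nothing forces them to coincide. A contraction argument in the $\Theta$-principal $\mathfrak{sl}_2$ controls elements such as $\exp(t(E+F))$, not an arbitrary $g\in G_\Theta^{>0}$.

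The missing observation is that you only need the lower extremities to converge, not the diamonds to shrink. Your first step, once carried out, says precisely that $(g^n\cdot p_\Theta)_{n\in\N}$ is increasing and bounded for the positive order. That is the monotonicity hypothesis of \cite[Proposition~3.14]{GLW}. The proposition then gives convergence of the points $g^n\cdot p_\Theta$ to some $x\in D_\Theta$; this is the analogue of $M_n\to M_\infty$, and no information on $N_\infty$ is needed. Likewise $g^{-n}\cdot p_\Theta\to y\in D_\Theta^\vee$. Then $g\cdot x=\lim_n g^{n+1}\cdot p_\Theta=x$ by continuity, and similarly for $y$. Positivity of $(y,p_\Theta,g\cdot p_\Theta,x)$ follows from the diamond facts of Section~\ref{sec_diamonds}, as in your last paragraph. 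This is the paper's argument.
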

\begin{proof}
    The sequences \((g^n\cdot p_\Theta)_{n\in\N}\) and \((g^{-n}\cdot p_\Theta)_{n\in\N}\) satisfy the (monotonicity) hypothesis of \cite[Proposition~3.14]{GLW}, hence converge by this proposition.  It is clear that their limits, denoted respectively~\(x\) and~\(y\), are fixed by~\(g\).  By \cite[Proposition~3.14]{GLW} again, \(x\)~belongs to the standard diamond~\(D_{\Theta}\) and \(y\)~belongs to~\(D_{\Theta}^{\vee}\).  The  quadruple \((y, p_\Theta, g\cdot p_\Theta, x)\) is thus positive (Section~\ref{sec_diamonds}).
\end{proof}

\subsection{Projections to symmetric spaces}\label{sec:projections}
The unipotent radical~\(U_{\Theta}\) of~$P_{\Theta}$ is diffeomorphic to its Lie algebra $\mathfrak{u}_{\Theta}$, \ie the maps $\exp\colon  \mathfrak{u}_\Theta\to U_\Theta$ and $\log\colon  U_\Theta \to \mathfrak{u}_\Theta$ are diffeomorphisms.
The Lie algebra $\mathfrak{u}_\Theta$ is graded by the adjoint action of~$D$ of the \(\Theta\)-principal \(\mathfrak{sl}_2\)-triple (Section~\ref{sec_theta_prin}): the degree~\(d\) part being \(\ker(\Ad(D)-d\mathrm{Id})|_{\mathfrak{u}_\Theta}\).  Its degree~\(1\) component is equal to $\bigoplus_{\alpha \in \Theta} \mathfrak{u}_\alpha$, and we have a projection $p\colon \mathfrak{u}_\Theta \to \bigoplus_{\alpha \in \Theta} \mathfrak{u}_\alpha$ on the degree~\(1\) component (see \cite[Section~10.8]{GW_pos}).  All these maps are equivariant with respect to the action of $L_{\Theta}^{\circ}$.

\begin{definition}\label{def:degree_one}
The \emph{degree~\(1\) component} of an element $u \in U_\Theta$ is the following element 
\[[u]_1\coloneq p \circ \log (u) \in  \bigoplus_{\alpha \in \Theta} \mathfrak{u}_\alpha.\]
The map \(u\mapsto [u]_1\) is called the \emph{degree~\(1\) projection}
\end{definition}
The degree~\(1\) projection is a homomorphism: for all~\(u\) and~\(v\) in \(U_\Theta\), one has \([uv]_1 = [u]_1 + [v]_1\) (this follows from the Campbell--Hausdorff formula and the fact that \(p([a,b])=0\) for every~\(a\) and~\(b\) in \(\mathfrak{u}_\Theta\)).  
Furthermore, it is equivariant with respect to the action of \(L_{\Theta}\): for all~\(\ell\) in \(L_{\Theta}\) and~\(u\) in~\(U_{\Theta}\), one has \(\ell\cdot [u]_1 = [\ell u\ell^{-1}]\).

 The parametrization of the semigroup \(U_{\Theta}^{>0}\) and Lemma~10.21 of \cite{GW_pos} give the following: 
\begin{proposition}\label{prop:symmspace}
Let $u \in U_\Theta^{> 0}$ be an element of the positive semigroup. Then the degree~\(1\) component $[u]_1$ belongs to $\bigoplus_{\alpha \in \Theta} \mathring{c}_{\alpha}$. 
\end{proposition}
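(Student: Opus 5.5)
The degree~\(1\) projection is a continuous homomorphism \(U_\Theta\to \bigoplus_{\alpha\in\Theta}\mathfrak{u}_\alpha\) (by the Campbell--Hausdorff remark above). I would first handle the non-negative semigroup and then pass to the interior using the parametrization.

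\emph{Step 1: generators land in the cones.} For a generator \(\exp(v)\) with \(v\in c_\alpha\), \(\alpha\in\Theta\), one has \(\log\exp(v)=v\), which already lies in the degree~\(1\) part. Hence \([\exp(v)]_1 = v\in c_\alpha\subset \bigoplus_{\beta\in\Theta} c_\beta\). Since the projection is additive and \(\bigoplus_\beta c_\beta\) is a convex cone closed under addition, every product of generators satisfies \([u]_1\in \bigoplus_{\beta\in\Theta} c_\beta\). So \([U_\Theta^{\geq 0}]_1 \subset \bigoplus_\beta c_\beta\).

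\emph{Step 2: interior.} To get the open cones for \(u\in U_\Theta^{>0}\), I would use the parametrization of \(U_\Theta^{>0}\) from \cite{GW_pos}. For a reduced word \(\mathbf{w}\) of the longest element \(w^\Theta\) of the \(\Theta\)-Weyl group, every \(u\in U_\Theta^{>0}\) is a product \(\exp(v_1)\cdots\exp(v_N)\) with \(v_k\in\mathring{c}_{\alpha_{i_k}}\). Every \(\alpha\in\Theta\) appears among the letters of \(\mathbf{w}\), since a reduced word of the longest element uses all generators. Then
\[[u]_1=\sum_{k} v_k,\]
and its \(\mathfrak{u}_\alpha\)-component is a non-empty sum of elements of \(\mathring{c}_\alpha\). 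This component lies in \(\mathring{c}_\alpha\), because the interior of a convex cone is stable under addition. This is exactly the content I would extract from Lemma~10.21 of \cite{GW_pos}.

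\emph{Alternative route.} If the parametrization were not directly available in this form, I would argue by openness. \(U_\Theta^{>0}\) is open, and the projection \(u\mapsto[u]_1\) is a surjective linear-type submersion (via \(\exp\) it is the linear projection \(p\)), hence an open map. Therefore the image of the open set \(U_\Theta^{>0}\) is open. By Step~1 it is also contained in \(\bigoplus_\beta c_\beta\). An open subset of a closed convex set lies in its interior, which is \(\bigoplus_\beta \mathring{c}_\beta\).

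This openness argument in fact looks cleaner and avoids any word combinatorics, so I would present it as the main proof. The only point needing care is that \(p\circ\log\) is open. It is: it is the composition of the diffeomorphism \(\log\) with a linear surjection.
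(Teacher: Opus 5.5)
Your proposal is correct. Your Step~2 is essentially the paper's own proof. The paper simply invokes the parametrization of $U_\Theta^{>0}$ by products $\exp(v_1)\cdots\exp(v_N)$, with $v_k\in\mathring{c}_{\alpha_{i_k}}$, along a reduced word of the longest element of $W(\Theta)$, together with Lemma~10.21 of \cite{GW_pos}.

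The openness argument you choose as your main proof is a genuinely different and more elementary route. It uses only three facts:
\begin{enumerate}
\item $U_\Theta^{>0}$ is by definition the interior of $U_\Theta^{\geq 0}$;
\item $u\mapsto[u]_1$ is additive and sends the generators $\exp(v)$, $v\in c_\alpha$, into the closed cones;
\item $p\circ\log$ is an open map.
\end{enumerate}
It therefore bypasses both the parametrization theorem, which is a substantial result of \cite{GW_pos}, and the combinatorial fact that every $\alpha\in\Theta$ occurs in a reduced word of the longest element.

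What the paper's route gives in addition is the explicit formula $[u]_1=\sum_k v_k$. From it one reads off at once that $u\mapsto[u]_1$ maps $U_\Theta^{>0}$ \emph{onto} $\bigoplus_{\alpha\in\Theta}\mathring{c}_\alpha$, as stated in Remark~\ref{rem.symspace}. With your approach, surjectivity needs one more step, for instance:
\begin{enumerate}
\item the $L_\Theta^\circ$-equivariance of $[\cdot]_1$;
\item the $L_\Theta^\circ$-invariance of $U_\Theta^{>0}$;
\item the transitivity of $L_\Theta^\circ$ on $\bigoplus_{\alpha\in\Theta}\mathring{c}_\alpha$.
\end{enumerate}
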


\begin{remark}\label{rem.symspace}
  Another way to state this is that the map \[U_\Theta^{> 0}\to \bigoplus_{\alpha \in \Theta} \mathring{c}_{\alpha} \mid u \mapsto [u]_1\] is an $L_{\Theta}^{\circ}$-equivariant projection from the unipotent positive semigroup onto the Riemannian symmetric space of~\(L_{\Theta}^{\circ}\).
\end{remark}

Using this map, one can give a sufficient condition for an element of \(L_{\Theta}^{\circ}\) to be in \(L_{\Theta}^{*}\):

\begin{lemma}
  \label{lemma_pos_4uple_Lstar}
  Let \(\ell\) be in \(L_{\Theta}^{\circ}\).  Assume that there is an element~\(u\) in \(U_{\Theta}^{>0}\) such that the quadruple \(({p}_{\Theta}^{\prime}, u\cdot {p}_{\Theta}^{\prime}, \ell u\cdot {p}_{\Theta}^{\prime}, \mathfrak{p}_{\Theta})\) is positive.  Then \(\ell\)~is conjugate to an element of \(L_{\Theta}^{*}\), more precisely \(\ell\)~belongs to \(L_{\Theta}^{e}\) where \(e=[u]_1\).
\end{lemma}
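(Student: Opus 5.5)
Assume the quadruple \(({p}_{\Theta}^{\prime}, u\cdot {p}_{\Theta}^{\prime}, \ell u\cdot {p}_{\Theta}^{\prime}, p_{\Theta})\) is positive. The plan is to show \(\ell u \ell^{-1} = u w\) for some \(w\in U_{\Theta}^{>0}\), and then apply the degree~\(1\) projection.

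First we locate \(\ell u\cdot p'_\Theta\). Since \(\ell\) normalizes \(U_\Theta\) and fixes \(p'_\Theta\), we have \(\ell u\cdot p'_\Theta = (\ell u\ell^{-1})\cdot p'_\Theta\), and \(\ell u \ell^{-1}\) lies in \(U_{\Theta}^{>0}\) because \(U_\Theta^{>0}\) is \(L_{\Theta}^{\circ}\)-invariant. We translate the quadruple by \(u^{-1}\), which fixes \(p_\Theta\) and preserves positivity. The result is the positive quadruple
\[(u^{-1}\cdot p'_\Theta,\ p'_\Theta,\ u^{-1}\ell u\ell^{-1}\cdot p'_\Theta,\ p_\Theta).\]
By the description of diamonds in Section~\ref{sec_diamonds} (fact~(2), together with uniqueness of diamonds containing a given point), positivity of \((x,p'_\Theta,z,p_\Theta)\) with \(z\) transverse to \(p'_\Theta\) forces \(z\) to lie in the diamond with extremities \(p'_\Theta, p_\Theta\) that contains \(u\cdot p'_\Theta\). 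This is the diamond \(D_\Theta\), since \(u^{-1}\cdot p'_\Theta\in D_\Theta^\vee\).

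Hence \(u^{-1}\ell u\ell^{-1}\cdot p'_\Theta\in D_\Theta = f^{\opp}(U_\Theta^{>0})\). Because \(f^{\opp}|_{U_\Theta}\) is injective, \(w := u^{-1}\ell u\ell^{-1}\) belongs to \(U^{>0}_\Theta\). The main obstacle is exactly this identification of the correct diamond, i.e.\ excluding the other diamonds with the same extremities. I would settle it by invoking \cite[Section~13]{GW_pos}: for a positive quadruple \((a,b,c,d)\), the triples \((b,c,d)\) and \((a,b,c)\)-type subconfigurations lie in a common consistent diamond. Equivalently, \((p'_\Theta, u\cdot p'_\Theta,\ell u\cdot p'_\Theta,p_\Theta)\) positive means that \(\ell u\cdot p'_\Theta\) lies in the diamond \(D(u\cdot p'_\Theta, p_\Theta)\subset D_\Theta\). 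That diamond equals \(u\cdot D_\Theta = uU^{>0}_\Theta\cdot p'_\Theta\).

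Finally we apply the degree~\(1\) projection to \(\ell u\ell^{-1} = uw\). Since the projection is a homomorphism and \(L_\Theta\)-equivariant, this gives \(\ell\cdot [u]_1 = [u]_1 + [w]_1\). By Proposition~\ref{prop:symmspace}, \([w]_1\in\bigoplus_{\alpha\in\Theta}\mathring{c}_\alpha\), and \(e=[u]_1\) also lies there. Therefore \(\ell\cdot e - e\in\bigoplus_\alpha \mathring{c}_\alpha\), i.e.\ \(\ell\in L_\Theta^{e}\).

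Since \(L_\Theta^{\circ}\) acts transitively on \(\bigoplus_\alpha\mathring{c}_\alpha\), we can choose \(h\in L_{\Theta}^{\circ}\) with \(h\cdot (E_\alpha)_\alpha = e\). Then \(L_\Theta^e = hL_\Theta^* h^{-1}\), so \(\ell\) is conjugate to an element of \(L_\Theta^{*}\).
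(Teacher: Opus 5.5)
Your proof is correct and follows essentially the paper's route. The paper likewise extracts from the positive quadruple that \(\ell u\ell^{-1}\) differs from \(u\) by a factor in \(U_{\Theta}^{>0}\), then applies the additive, \(L_\Theta\)-equivariant degree~\(1\) projection together with Proposition~\ref{prop:symmspace}. You justify that first step by translating by \(u^{-1}\) and identifying the diamond, and you obtain \(\ell u\ell^{-1}=uw\), whereas the paper simply asserts \(\ell u\ell^{-1}=vu\); the order is immaterial after taking degree~\(1\) components, and yours is the order the diamond description actually gives.
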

\begin{proof}
  The hypothesis says that there is an element~\(v\) in \(U_{\Theta}^{>0}\) such that \(\ell u \ell^{-1} = vu\).  Taking the degree~\(1\) component, one gets that \(\ell\cdot [u]_1 = [u]_1 + [v]_1\) and thus,  that 
  \(\ell\cdot [u]_1 - [u]_1 \) belongs to \(\bigoplus_{\alpha\in \Theta} \mathring{c}_{\alpha}\).
\end{proof}

One can use the degree~\(1\) projection to prove that elements of \(L_{\Theta}^{*}\) are loxodromic.

\begin{proposition}\label{prop_Lthetastar_prox}
  Let~\(\ell\) be an element of \(L_{\Theta}^{*}\), then \(\ell\)~is loxodromic, more precisely the attracting fixed point of~\(\ell\) in \(\mathsf{F}_\Theta\) is \({p}_{\Theta}\) and the repelling fixed point is \({p}_{\Theta}^{\prime}\).
\end{proposition}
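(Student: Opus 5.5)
The plan is to note that \(\ell\) fixes both \(p_\Theta\) and \(p_{\Theta}^{\prime}\), since \(\ell\in L_\Theta=P_\Theta\cap P_{\Theta}^{\opp}\). We then compute its tangential action at these two points and apply the criterion recalled in Section~\ref{sec_flag_var}: a fixed point \(x\) is the attracting fixed point exactly when the tangential action on \(T_x\mathsf{F}_\Theta\) is contracting. Through the \(L_{\Theta}^{\circ}\)-equivariant charts \(X\mapsto \exp(X)\cdot p_\Theta\) and \(X\mapsto\exp(X)\cdot p_{\Theta}^{\prime}\), the tangential action of~\(\ell\) at \(p_\Theta\) is \(\Ad(\ell)|_{\mathfrak{u}_{\Theta}^{\opp}}\), and at \(p_{\Theta}^{\prime}\) it is \(\Ad(\ell)|_{\mathfrak{u}_\Theta}\). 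The Killing form pairs \(\mathfrak{u}_\Theta\) and \(\mathfrak{u}_{\Theta}^{\opp}\) nondegenerately and \(\Ad(\ell)\)-invariantly, so \(\Ad(\ell)|_{\mathfrak{u}_{\Theta}^{\opp}}\) is the inverse transpose of \(\Ad(\ell)|_{\mathfrak{u}_\Theta}\). Hence everything reduces to one claim: all eigenvalues of \(\Ad(\ell)\) on \(\mathfrak{u}_\Theta\) have modulus \(>1\). Granting this, \(\ell\) contracts \(T_{p_\Theta}\mathsf{F}_\Theta\), so \(p_\Theta\) is the attracting fixed point. Likewise \(\ell^{-1}\) contracts \(T_{p_{\Theta}^{\prime}}\mathsf{F}_\Theta\), so \(p_{\Theta}^{\prime}\) is the repelling fixed point.

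The core step is the degree~\(1\) part \(\mathfrak{u}_1=\bigoplus_{\alpha\in\Theta}\mathfrak{u}_\alpha\), handled by a Perron--Frobenius type argument with the cone \(C=\bigoplus_{\alpha\in\Theta}c_\alpha\). This cone is closed, acute, has nonempty interior, and is \(L_{\Theta}^{\circ}\)-invariant. Write \(\le\) for the order it defines and put \(e=(E_\alpha)_{\alpha\in\Theta}\), an interior point. The hypothesis \(\ell\in L_\Theta^*\) says \(\ell\cdot e-e\in\mathring C\). Applying \(\ell^{-1}\), which preserves \(\mathring C\), gives \(e-\ell^{-1}\cdot e\in\mathring C\). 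Since \(\mathring C\) is open, there is \(\lambda<1\) with \(\ell^{-1}\cdot e\le\lambda e\). Indeed, \(\ell^{-1}\cdot e-\lambda e\) moves continuously in~\(\lambda\) and lies in \(-\mathring C\) at \(\lambda=1\).

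Because \(\ell^{-1}\) preserves \(\le\), induction gives \(0\le \ell^{-n}\cdot e\le\lambda^n e\). Since \(e\) is an interior point, every \(v\in\mathfrak{u}_1\) satisfies \(-te\le v\le te\) for some \(t\). Hence \(-t\lambda^n e\le\ell^{-n}\cdot v\le t\lambda^ne\). By acuteness of~\(C\), order intervals \([-a,a]\) have norm bounded by a constant times \(\|a\|\). This gives \(\|\Ad(\ell^{-n})|_{\mathfrak{u}_1}\|\le c\lambda^n\), so \(\Ad(\ell)\) expands \(\mathfrak{u}_1\) with all eigenvalues of modulus \(\ge\lambda^{-1}>1\).

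To pass to all of \(\mathfrak{u}_\Theta\), I will use that \(\mathfrak{u}_\Theta\) is generated as a Lie algebra by its degree~\(1\) component for the grading by \(\mathrm{ad}(D)\) (Section~\ref{sec:projections}). Then the degree~\(k\) component is spanned by \(k\)-fold brackets of elements of \(\mathfrak{u}_1\). Since \(\Ad(\ell^{-n})\) is a Lie algebra automorphism, it gives \(\|\Ad(\ell^{-n})|_{\mathfrak{u}_k}\|\le c_k\lambda^{kn}\), so all eigenvalues of \(\Ad(\ell)\) on \(\mathfrak{u}_\Theta\) have modulus \(>1\), as required.

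The step I expect to need the most care is this generation statement. It should follow from the structure of the \(\Theta\)-grading, namely that every positive root not in \(\Span(\Delta\smallsetminus\Theta)\) is reached from roots of \(\Theta\)-degree one by adding such roots. If it is not directly available, I would fall back on the analogue of the statement for \(\mathfrak{u}_\Theta\) in \cite[Section~10]{GW_pos}.
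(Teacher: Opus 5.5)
Your proposal is correct and follows essentially the same route as the paper. You identify the tangent actions at \(p_{\Theta}^{\prime}\) and \(p_\Theta\) with \(\Ad(\ell)\) on \(\mathfrak{u}_\Theta\) and \(\mathfrak{u}_{\Theta}^{\opp}\), reduce via Kostant's generation theorem to \(\bigoplus_{\alpha\in\Theta}\mathfrak{u}_\alpha\), and get expansion there from the cone condition; your estimate \(\ell^{-1}\cdot e\le\lambda e\) is the paper's norm with unit ball \((x_\alpha-c_\alpha)\cap(-x_\alpha+c_\alpha)\) in another guise, while your Killing-form duality (in place of the paper's ``by symmetry'') and your explicit passage from degree one to all of \(\mathfrak{u}_\Theta\) are only cosmetic additions.
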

\begin{proof}
  By symmetry, we will only prove that the repelling fixed point of~\(\ell\) is \({p}_{\Theta}^{\prime}\).  It is clearly a fixed point of~\(\ell\) and we need to establish that the tangent action of~\(\ell\) on \(T_{{p}_{\Theta}^{\prime}} \mathsf{F}_\Theta\) is dilating.

  Since \(T_{{p}_{\Theta}^{\prime}} \mathsf{F}_\Theta\) can be \(L_{\Theta}\)-equivariantly identified with \(\mathfrak{u}_{\Theta}\) endowed with the (restriction of the) adjoint action of~\(L_\Theta\), it is thus sufficient to prove that \(\Ad(\ell)|_{\mathfrak{u}_\Theta}\) is a dilating automorphism of the Lie algebra~\(\mathfrak{u}_\Theta\).  Since the Lie algebra~\(\mathfrak{u}_\Theta\) is generated by \(\bigoplus_{\alpha\in \Theta} \mathfrak{u}_\alpha\) (this is a result of Kostant \cite{Kostant_RootLevi}, \cf \cite[Theorem~2.2]{GW_pos}), it is enough to prove that, for every~\(\alpha\) in~\(\Theta\), the automorphism \(\Ad(\ell)|_{\mathfrak{u}_\alpha}\) is dilating.
  
  By the hypothesis (and using the degree~\(1\) projection), for every~\(\alpha\) in~\(\Theta\), the element~\(x_\alpha\) in \(\mathring{c}_{\alpha}\) is such that \(y_\alpha \coloneq \Ad(\ell) x_\alpha\) belongs to \(x_{\alpha} + \mathring{c}_{\alpha}\).

  Let us introduce the two subsets \(D_\alpha = (x_{\alpha} - {c}_{\alpha}) \cap (-x_{\alpha} + {c}_{\alpha})\) and \(F_\alpha  = (y_{\alpha} - {c}_{\alpha}) \cap (-y_{\alpha} + {c}_{\alpha})\).  Those are closed and convex subsets of~\(\mathfrak{u}_\alpha\) that are of nonempty interior and invariant by multiplication by~\(-1\).  Let \(\Vert\cdot \Vert_{D_\alpha}\) and \(\Vert\cdot \Vert_{F_\alpha}\) be the norms whose unit balls are~\(D_\alpha\) and~\(F_\alpha\) respectively.  These norms on~\(\mathfrak{u}_\alpha\) satisfy the following properties: \(\Ad(\ell)|_{\mathfrak{u}_\alpha}\colon (\mathfrak{u}_\alpha, \Vert\cdot \Vert_{D_\alpha}) \to (\mathfrak{u}_\alpha, \Vert\cdot \Vert_{F_\alpha})\) is an isometry and since the closure of~\(D_{\alpha}\) is contained in~\(F_\alpha\) (this follows from the condition on~\(y_\alpha\)), there is \(k>1\) such that \(\Vert\cdot \Vert_{F_\alpha} \geq k\Vert\cdot \Vert_{D_\alpha}\).  This implies that  \(\Ad(\ell)|_{\mathfrak{u}_\Theta}\) is a dilating automorphism of the Lie algebra~\(\mathfrak{u}_\Theta\) and gives the result. 
\end{proof}

\subsection{Cross-ratios}\label{sec:crossratio}

The degree~\(1\) projection can be used to define a cross-ratio like invariant of positive quadruples. 
Given a positive quadruple $(f_1,f_2,f_3,f_4) = (p_\Theta, u_1\cdot {p}_{\Theta}^{\prime}, u_2\cdot {p}_{\Theta}^{\prime}, {p}_{\Theta}^{\prime})$ we get the two elements $[u_1]_1, [u_2]_1 \in \bigoplus_{\alpha \in \Theta} \mathring{c}_{\alpha}$, and their difference 
$[u_2]_1 -  [u_1]_1$ takes values in 
$ \bigoplus_{\alpha \in \Theta} \mathring{c}_{\alpha} \subset  \bigoplus_{\alpha \in \Theta} \mathfrak{u}_{\alpha}$. 
This does not yet give a cross-ratio function.  The Levi subgroup $L_\Theta^\circ$ acts on such four tuples, the degree~\(1\) projection is equivariant with respect to this action. The action of  $L_\Theta^\circ$ allows to take $[u_1]_1$ to $e = (x_\alpha)_{\alpha \in \Theta}$, and $[u_2]_1$ to an element $(y_\alpha)_{\alpha \in \Theta}$ such that $y_\alpha = r_\alpha x_\alpha$ when $\alpha \in \Theta\smallsetminus \{ \alpha_\Theta\}$ and $y_\alpha = x_\alpha + \sum_{i= 0}^k r_iz_i$, for $r_i>0$. This thus gives a well-defined cross ratio function 
$c(f_1,f_2,f_3,f_4)$, which is an invariant of positive quadruples up to the action of $G$. It is defined an all positive quadruples $(f_1,f_2,f_3,f_4) = g \cdot (p_\Theta, u_1\cdot {p}_{\Theta}^{\prime}, u_2\cdot {p}_{\Theta}^{\prime}, {p}_{\Theta}^{\prime})$. 

There is another geometric interpretation of this cross ratio function. 
As we can identify $\bigoplus_{\alpha \in \Theta} c^\circ_\alpha$ with the symmetric space associated to $L_\Theta^\circ$, 
the points $[u_1]_1, [u_2]_1$ can be considered as points in the symmetric space. Again, using the action of $L_\Theta^\circ$, we can move $[u_1]_1$ to the base point and $[u_2]_1$ into the Weyl chamber $\bar{\mathfrak{a}}_{\Theta}^{+}$ of the Cartan subspace of $L_\Theta^\circ$. 
Thus, the cross ratio function agrees with the vector valued distance function $d_{\mathfrak{a}_\Theta} ([u_1]_1, [u_2]_1)\in \bar{\mathfrak{a}}_{\Theta}^{+}$. 
The projection of this cross ratio function to the center $\mathfrak{z}_\Theta \subset \bar{\mathfrak{a}}_{\Theta}$ of $\mathfrak{l}_\Theta$ gives real valued cross ratio functions.

The construction of a cross-ratio for four tuples of flags is not new at all. Kantor in \cite{Kantor_crossratio} gives a very general construction of cross ratio functions of four tuples of flags.  Siegel \cite{Siegel_symplectic} has used a cross ratio function of four Lagrangian subspaces to express the Riemannian metric on the Siegel upper half space. This symplectic cross ratio function has further been studied by \cite{HartnickStrubel, BurgerPozzetti}
Further cross ratio functions have been considered in \cite{Labourie_crossratio, Kim_crossratio, Beyrer, BGLPW}. 
In  \cite{Riestenberg_Smillie} a cross-ratio function similar to the above one is used to introduce a notion of quasi-symmetric maps in the full flag variety of $\SL_n(\R)$. 

\subsection{Proximality}

Let~\(g\) be in \(G_{\Theta}^{>0}\).  By the definition of~\(G_{\Theta}^{>0}\), the element~\(g\) belongs to~\(G^\circ\).  Since we are mainly concerned in this section by the action of~\(g\) on the flag variety, we will rather work with the image of~\(g\) in the adjoint group \(\mathrm{Aut}_0(\mathfrak{g})\subset \mathrm{Aut}_1(\mathfrak{g})\).  We will denote by~\(\bar{g}\) this element of \(\mathrm{Aut}_0(\mathfrak{g})\); the actions of~\(g\) and~\(\bar{g}\) on \(\mathsf{F}_\Theta\) coincide and one has \(g^+=\bar{g}^+\) and \(g^-=\bar{g}^-\).   
  By Lemma~\ref{lemma_iterates_CV}, the sequences \((\bar{g}^n\cdot p_\Theta)_{n\in\N}\) and \((\bar{g}^{-n}\cdot p_\Theta)_{n\in\N}\) converge in~\(\mathsf{F}_\Theta\) to elements~\(x\) and~\(y\) respectively.  The quadruple \((y, p_\Theta, g\cdot p_\Theta, x)\) is positive, and \(x\) and \(y\) are fixed by~\(\bar{g}\).

Let~\(h\) be an element of~\(\mathrm{Aut}_1(\mathfrak{g})\) such that \(h\cdot y = p_{\Theta}^{\prime}\), \(h\cdot x = p_\Theta\), and \(f=h\cdot p_\Theta\) belongs to the standard diamond.  The element \(j=h\bar{g}h^{-1}\) fixes \(p_\Theta\) and \(p_{\Theta}^{\prime}\) and thus belongs to \(L_\Theta\).  Let \(k\) be a positive integer such that \(\ell=j^k\) belongs to~\(L_{\Theta}^{\circ}\).  By applying 
Lemma~\ref{lemma_iterates_CV} to \(g^k\) we get that \((p_{\Theta}^{\prime}, f, \ell\cdot f, p_\Theta)\) is also a positive quadruple.  By Lemma~\ref{lemma_pos_4uple_Lstar} we get that \(\ell\) is conjugate to an element of \(L_{\Theta}^{*}\) and by Proposition~\ref{prop_Lthetastar_prox} we get that \(\ell\) is proximal with attracting and repelling fixed points \({p}_{\Theta}\) and \(p_{\Theta}^{\prime}\) respectively.  From this we obtain that \(g^k\) (and hence~\(g\)) is proximal with attracting and repelling fixed points 
\(g^+ = x = \lim_{n\to \infty} g^n\cdot {p}_{\Theta}\) and \(g^- = y =  \lim_{n\to \infty} g^{-n} \cdot p_{\Theta}^{\prime}\) respectively.

With this Theorem~\ref{thm:proximal} is established. 

\subsection{Continuous maps from the positive semigroup}\label{sec:continuous map}

We now prove Proposition~\ref{prop:posprox}; along the way we state the continuity of several maps from the positive semigroup. Some of these statements are of interest in their own right. 

Let us explain quickly the strategy: we will prove that we can continuously conjugate elements of~\(G_{\Theta}^{>0}\) into~\(L_\Theta\) by ensuring that the fixed points of the conjugate are~\(p_\Theta\) and \(p_{\Theta}^{\prime}\); with the additional information of the positivity of a quadruple we will be able to conclude.

The arguments rely on the fact that the attracting (\resp repelling) fixed points vary continuously with the proximal elements. 

The map
\begin{align*}
  G_{\Theta}^{>0} & \longrightarrow \mathsf{F}_\Theta\\
  g &\longmapsto g^-
\end{align*}
is continuous and, for all~\(g\), the point~\(g^-\) is transverse to~\(p_\Theta\) (since the  quadruple \(( g^-, p_\Theta, g\cdot p_\Theta, g^+)\) is positive); there is thus a unique element~\(u_g\) in~\(U_\Theta\) such that \(u_g\cdot g^-= p_{\Theta}^{\prime}\).
The map \(G_{\Theta}^{>0} \rightarrow U_\Theta\mid 
  g \mapsto u_g\)
is continuous.

The map
\begin{align*}
  G_{\Theta}^{>0} & \longrightarrow \mathsf{F}_\Theta\\
  g &\longmapsto u_g\cdot g^+
\end{align*}
is continuous and, for all~\(g\), the point~\(u_g\cdot g^+\) is transverse to~\(u_g\cdot g^- = p_{\Theta}^{\prime}\).  There is thus a unique element \(v_g\) in \(U_{\Theta}^{\opp}\) such that \(v_g u_g \cdot g^+= p_\Theta\).  Furthermore, the map \(  G_{\Theta}^{>0} \rightarrow U_{\Theta}^{\opp}\mid
  g \mapsto v_g
\)
is continuous and, for all~\(g\), \(v_g u_g \cdot g^-= p_{\Theta}^{\prime}\).  For all~\(g\), the element \(v_g u_g\cdot p_\Theta\) is transverse to \(p_{\Theta}^{\prime}\) and to \({p}_{\Theta}\) and the triple \((p_{\Theta}^{\prime}, v_g u_g\cdot p_\Theta ,{p}_{\Theta})\) is positive (since the triple \((g^-, {p}_{\Theta}, g^+)\) is positive).
Since $G_\Theta^{>0}$ is connected, 
for all~\(g\) in \(G_{\Theta}^{>0}\), 
the element \(v_g u_g\cdot p_\Theta\) belongs to the standard diamond. 
There exists thus a unique element \(w_g\) in \(U_{\Theta}^{>0}\) such that \(v_g u_g\cdot p_\Theta = w_g\cdot p_{\Theta}^{\prime}\) and the map
\(  G_{\Theta}^{>0} \rightarrow U_{\Theta}^{>0}\mid
  g \mapsto w_g
\)
is continuous.  The map
\begin{align*}
  G_{\Theta}^{>0} & \longrightarrow \bigoplus_{\alpha\in \Theta} \mathring{c}_{\alpha}\\
  g &\longmapsto [w_g]_1
\end{align*}
is continuous.  

Since \(\bigoplus_{\alpha\in \Theta} \mathring{c}_{\alpha}\) is a model of the symmetric space, using the Iwasawa decomposition of $L_\Theta^{\circ}$, one gets that the group $L_\Theta^\circ\cap U_\Delta$ acts transitively on the symmetric space of $L_\Theta^{\circ}$. Thus,  there is a unique element \(s_g\) in \(L_{\Theta}^{\circ} \cap U_\Delta\) such that \([s_g w_g]_1 = (x_\alpha)_{\alpha\in \Theta}\) and the map \(g\mapsto s_g\) is continuous.  By connectedness and continuity, we get first that, for all~\(g\) in \(G_{\Theta}^{>0}\), the conjugate \(\ell_g = (s_g v_g u_g) g (s_g v_g u_g)^{-1}\) is an element of~\(L_{\Theta}^{\circ}\).  Applying Lemma~\ref{lemma_pos_4uple_Lstar} we get that, for every~\(g\) in \(G_{\Theta}^{>0}\), the element~\(\ell_g\) belongs to~\(L_{\Theta}^{*}\).

\subsection{Consequences for the action on diamonds}
As a corollary of our above consideration we also get a refinement of Lemma~\ref{lemma_iterates_CV}, which is of independent interest.

\begin{corollary}\label{cor_action_on_diam}
Let~$g$ be an element of the positive semigroup $G_\Theta^{>0}$. Then the action of~$g$ on the standard diamond~$D_\Theta$ is contracting and $\lim_{n \to \infty}g^n\cdot D_\Theta$ is reduced to a unique point, namely~$g^+$.  
\end{corollary}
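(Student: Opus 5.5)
Fix $g\in G_\Theta^{>0}$. By Theorem~\ref{thm:proximal}, $g$ is proximal on $\mathsf{F}_\Theta$ with attracting fixed point $g^+\in D_\Theta$ and repelling fixed point $g^-\in D_{\Theta}^{\vee}$, and $g^+$ and $g^-$ are transverse. The whole diamond $D_\Theta$ will be shown to lie in a compact subset of the basin of attraction of $g^+$. The basin of attraction is the affine chart of points transverse to $g^-$. So it suffices to prove that $\overline{D}_\Theta$ consists of points transverse to $g^-$. Once this is known, a standard compactness argument for proximal elements gives $g^n\cdot \overline{D}_\Theta\to\{g^+\}$ uniformly, since powers of a proximal element converge to $g^+$ uniformly on compact subsets of the basin. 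In particular $\lim_n g^n\cdot D_\Theta=\{g^+\}$.

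For the transversality, use the diamond calculus of Section~\ref{sec_diamonds}. Since $g^-\in D_\Theta^\vee$, there is a point $x\in D_\Theta$ such that $(g^-,p_\Theta,x,p_\Theta')$ is a positive quadruple. By fact (3) of Section~\ref{sec_diamonds} applied to this quadruple (after cyclic relabelling, as positivity of quadruples is invariant under cyclic permutations, cf. \cite{GW_pos}), $\overline{D}(p_\Theta,p_\Theta')=\overline{D}_\Theta$ is contained in a diamond with extremities $g^-$ and some point. Points of an open diamond are transverse to its extremities. Hence every point of $\overline{D}_\Theta$ is transverse to $g^-$.

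An alternative route, if the cyclic invariance is awkward, is to pick $y\in D_\Theta^\vee$ with $(p_\Theta',\ldots)$ arranged so that $g^-$ lies in a diamond $D(y,y')$ with $y,y'\in D_\Theta^\vee$ whose closure misses $\overline{D}_\Theta$. This uses the same nesting argument as in item~(\ref{item1bis_thm:positive}) of Theorem~\ref{thm:positive}, where any $x,y\in D^\vee_\Theta$ with $(p_\Theta',x,y,p_\Theta)$ positive give $\overline{D}_\Theta\subset D(x,y)$.

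For ``contracting'' in a stronger sense, one can add the inclusion $g\cdot\overline{D}_\Theta\subset D_\Theta$. By Theorem~\ref{thm:positive}, for $h\in G_\Theta^{\geq0}$ (closure of $S$) we have $gh\in S$, so $gh\cdot p_\Theta\in D_\Theta$. Since $\overline{D}_\Theta=G^{\ge0}_\Theta\cdot p_\Theta$ by properness (Corollary~\ref{cor_proper}), this gives $g\cdot\overline{D}_\Theta\subset D_\Theta$. Then $(g^n\cdot\overline{D}_\Theta)_n$ is a decreasing sequence of compact sets, and by the previous paragraph its intersection is $\{g^+\}$.

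The main obstacle I expect is the transversality claim: rigorously justifying, from the facts recalled on diamonds, that $g^-\in D_\Theta^\vee$ is transverse to all of $\overline{D}_\Theta$ and not only to $D_\Theta$.
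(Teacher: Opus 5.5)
Your overall strategy is the paper's. Since $g^-\in D_\Theta^\vee$, the closed diamond $\overline{D}_\Theta$ lies in the basin of attraction of $g$, i.e.\ the chart of points transverse to $g^-$, and compactness then gives $g^n\cdot\overline{D}_\Theta\to\{g^+\}$. The paper simply asserts this transversality. The argument you offer for it, however, does not work.

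Fact~(3) of Section~\ref{sec_diamonds} nests the closed diamond spanned by the two \emph{middle} entries of a positive quadruple inside a diamond spanned by the two outer entries. In $(g^-,p_\Theta,x,p_\Theta')$ the points $p_\Theta$ and $p_\Theta'$ are not cyclically adjacent: they are separated by $x$ on one side and by $g^-$ on the other. So no cyclic or dihedral relabelling puts them in the middle slots. The rotations only give statements such as $\overline{D}(x,p_\Theta')\subset D(p_\Theta,g^-)$ or $\overline{D}(g^-,p_\Theta)\subset D(p_\Theta',x)$, none of which concerns $\overline{D}_\Theta$.

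Your fallback is also not right as phrased. Knowing that $g^-$ lies in a diamond whose closure misses $\overline{D}_\Theta$ says nothing about transversality, which in $\mathsf{F}_\Theta$ is much stronger than being distinct.

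The fix is to use the configuration from item~(\ref{item1bis_thm:positive}) of Theorem~\ref{thm:positive}, but with $g^-$ itself as an extremity:
\begin{enumerate}
\item Pick $y\in D_\Theta^\vee$ with $(p_\Theta',g^-,y,p_\Theta)$ positive, e.g.\ $y$ in the sub-diamond of $D_\Theta^\vee$ with extremities $g^-$ and $p_\Theta$.
\item Apply fact~(3) to the rotation $(y,p_\Theta,p_\Theta',g^-)$; this gives $\overline{D}_\Theta\subset D(y,g^-)$.
\item Points of an open diamond are transverse to its extremities, so every point of $\overline{D}_\Theta$ is transverse to $g^-$.
\end{enumerate}

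Your optional paragraph on $g\cdot\overline{D}_\Theta\subset D_\Theta$ is not needed, but it has two further slips. First, item~(\ref{item4_thm:positive}) of Theorem~\ref{thm:positive} gives $G_\Theta^{\geq 0}S\subset S$, i.e.\ $hg\in S$, not $gh\in S$. Second, and more seriously, $\overline{D}_\Theta\neq G_\Theta^{\geq0}\cdot p_\Theta$. The latter equals $U_\Theta^{\opp\geq0}\cdot p_\Theta$, which lies in the chart of points transverse to $p_\Theta'$ and therefore misses $p_\Theta'\in\overline{D}_\Theta$. Corollary~\ref{cor_proper} concerns properness of the product map into $G$, not of the orbit map to $\mathsf{F}_\Theta$. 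The inclusion $g\cdot\overline{D}_\Theta\subset D_\Theta$ is nevertheless true. It follows from the positivity of $(p_\Theta,g\cdot p_\Theta,g\cdot p_\Theta',p_\Theta')$ (Corollary~\ref{cor:positive}), together with $g\cdot D_\Theta\subset D_\Theta$ and the uniqueness in fact~(3).
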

\begin{proof}
  The basin of attraction for the action of~\(g\) on~\(\mathsf{F}_\Theta\) is the affine chart whose elements are the points of~\(\mathsf{F}_\Theta\) that are transverse to the repulsive fixed point~\(g^-\).  As \(g^-\) belongs to the opposite standard diamond \(D_{\Theta}^{\vee}\), the closed diamond \(\overline{D}_{\Theta}\) is contained in the basin of attraction of~\(g\).  By compactness of~\(\overline{D}_\Theta\) we get the wanted contraction property as well as the fact that the sequence \((g^n\cdot \overline{D}_\Theta)_{n\in \N}\) converges, in the Hausdorff topology, to \(\{g^+\}\).
\end{proof}

\section[Positive and non-negative parts]{Positive and non-negative parts in flag varieties}\label{sec:pos_flag}

In \cite{LusztigPosRed, Lusztig} Lusztig introduced, in the context of total positivity, the positive and non-negative part of flag varieties. These have since been extensively studied \cite{Rietsch_closure, Rietsch_decomposition, RietschWilliams_complex, RietschWilliams_discrete, Williams_shelling, GalashinKarpLam, GalashinKarpLam_Gr}. Particular interest has been on the positive Grassmannian \cite{Postnikov_total}, which plays a major role in the study of scattering amplitudes in physics due to its relation to the amplituhedron, \cite{Arkani_etal_book, ArkanihamedTrnka}. 

 In this section we introduce and investigate the positive and non-negative parts of flag varieties for groups $G$ admitting a positive structure relative to $\Theta$.

\subsection{The positive part in $\mathsf{F}_{\Theta}$}
We first consider the case of the flag variety $\mathsf{F}_{\Theta}$,  for $\Theta \subset \Delta$ the subset defining the positive structure in $G$. 

\begin{definition}\label{def:pos_part}
The \emph{positive part} $\mathsf{F}_{\Theta}^{>0}$ of the flag variety $\mathsf{F}_{\Theta}$ is 
\[\mathsf{F}_{\Theta}^{>0} \coloneq  G_{\Theta}^{>0}\cdot {p}_{\Theta} = \{ u\cdot {p}_{\Theta} \mid u \in G_{\Theta}^{>0}\}.  
\]
The \emph{non-negative part} of the flag variety  $\mathsf{F}_{\Theta}^{\geq0}$ is the closure of $\mathsf{F}_{\Theta}^{>0}$ in~$\mathsf{F}_{\Theta}$. 
\end{definition}
\begin{lemma}\label{lem:pos_diamond}
The positive part $\mathsf{F}_{\Theta}^{>0}$  of $\mathsf{F}_{\Theta}$ is equal to the diamond~$D_\Theta$.  The non-negative part  $\mathsf{F}_{\Theta}^{\geq0}$ is the closure of the diamond~$\overline{D}_{\Theta}$.  In particular, we have that
$\mathsf{F}_{\Theta}^{>0} = G_{\Theta}^{>0}\cdot p_{\Theta}^{\prime} =U_{\Theta}^{>0}\cdot p_{\Theta}^{\prime}$.
\end{lemma}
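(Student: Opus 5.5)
The plan is a double inclusion for $\mathsf{F}_{\Theta}^{>0}=G_{\Theta}^{>0}\cdot p_\Theta$, followed by taking closures. By Theorem~\ref{thm:positive}, $G_{\Theta}^{>0}=S=U_{\Theta}^{>0}\cdot L_{\Theta}^{\circ}\cdot U_{\Theta}^{\opp>0}$. Item~(\ref{item1_thm:positiie}) of that theorem says that $s\cdot p_\Theta\in D_\Theta$ and $s\cdot p_\Theta'\in D_\Theta$ for every $s\in S$. This gives $G_{\Theta}^{>0}\cdot p_\Theta\subset D_\Theta$ and $G_{\Theta}^{>0}\cdot p'_\Theta\subset D_\Theta$.

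For the reverse inclusions, recall from Section~\ref{sec_diamonds} that $D_\Theta=U_{\Theta}^{\opp>0}\cdot p_\Theta=U_{\Theta}^{>0}\cdot p'_\Theta$. It therefore suffices to show $U_{\Theta}^{\opp>0}\subset G_{\Theta}^{>0}\cdot P_\Theta$-translates, more precisely that every $v\cdot p_\Theta$ with $v\in U_{\Theta}^{\opp>0}$ equals $s\cdot p_\Theta$ for some $s\in S$. To do this I will write $S=U_{\Theta}^{\opp>0}\cdot L_{\Theta}^{\circ}\cdot U_{\Theta}^{>0}$ using item~(\ref{item3_thm:positive}). Pick any $u\in U_{\Theta}^{>0}$ (for instance $x(1)$ from Section~\ref{sec_theta_prin}). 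Then $s=vu\in S$, and since $u\in U_\Theta\subset P_\Theta$ fixes $p_\Theta$, we get $s\cdot p_\Theta=v\cdot p_\Theta$. Symmetrically, for $u\in U_{\Theta}^{>0}$ and any $v\in U_{\Theta}^{\opp>0}$, we have $uv\in S$, and since $v\in U^{\opp}_\Theta\subset P^{\opp}_\Theta$ fixes $p'_\Theta$, $uv\cdot p'_\Theta=u\cdot p'_\Theta$. Hence
\[
D_\Theta=G_{\Theta}^{>0}\cdot p_\Theta=G_{\Theta}^{>0}\cdot p'_\Theta=U_{\Theta}^{>0}\cdot p'_\Theta.
\]

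The non-negative part is then by definition the closure of $\mathsf{F}_{\Theta}^{>0}=D_\Theta$, namely $\overline{D}_\Theta$, so nothing further is needed for that statement.

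The only point requiring care is making sure the stabilizer facts are used correctly: that $U_\Theta$ fixes $p_\Theta$ and $U_\Theta^{\opp}$ fixes $p'_\Theta$. This is immediate because $p_\Theta$ and $p'_\Theta$ correspond to $\mathfrak{p}_\Theta$ and $\mathfrak{p}_\Theta^{\opp}$, and $\iota(\Theta)=\Theta$. Beyond that, the argument rests entirely on the two decompositions of $S$ in Theorem~\ref{thm:positive} and the description of $D_\Theta$ in Section~\ref{sec_diamonds}, so I do not expect a real obstacle.
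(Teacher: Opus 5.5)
Your proof is correct and takes essentially the paper's route. Both arguments rest on the two decompositions $G_{\Theta}^{>0}=U_{\Theta}^{\opp>0}L_{\Theta}^{\circ}U_{\Theta}^{>0}=U_{\Theta}^{>0}L_{\Theta}^{\circ}U_{\Theta}^{\opp>0}$, on the stabilizer facts ($L_{\Theta}^{\circ}U_{\Theta}$ fixes $p_\Theta$, resp.\ $L_{\Theta}^{\circ}U_{\Theta}^{\opp}$ fixes $p'_\Theta$), and on $D_\Theta=U_{\Theta}^{\opp>0}\cdot p_\Theta=U_{\Theta}^{>0}\cdot p'_\Theta$. The only difference is cosmetic: the paper reads off both inclusions directly from the decompositions, so item~(\ref{item1_thm:positiie}) of Theorem~\ref{thm:positive} is not needed, while you obtain one inclusion from that item and the other by taking the Levi factor trivial.
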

\begin{proof}
The equality \(\mathsf{F}_{\Theta}^{>0} = D_\Theta\) follows from the decomposition $G_{\Theta}^{>0} = U_{\Theta}^{\opp >0} L_{\Theta}^{\circ} U_\Theta^{> 0}$ and the fact that $L_{\Theta}^{\circ} U_\Theta^{> 0}$ stabilizes~\({p}_\Theta\).  Since the equality \(D_\Theta = U_{\Theta}^{>0}\cdot p_{\Theta}^{\prime}\) holds, since $G_{\Theta}^{>0} = U_{\Theta}^{>0} L_{\Theta}^{\circ} U_\Theta^{\opp >0}$, and since \(L_{\Theta}^{\circ} U_\Theta^{\opp >0}\) stabilizes \(p_{\Theta}^{\prime}\), one also gets that
$\mathsf{F}_{\Theta}^{>0} = G_{\Theta}^{>0}\cdot p_{\Theta}^{\prime}$. 
\end{proof}

The following corollaries are immediate consequences of statements in the previous section.

\begin{corollary}
Let $g \in G_{\Theta}^{>0}$. Then there exists a unique fixed point in $\mathsf{F}_{\Theta}^{\geq 0}$ for the action of~$g$.  This fixed point is the attracting fixed point~$g^+$.
\end{corollary}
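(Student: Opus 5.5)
The plan is to combine Theorem~\ref{thm:proximal} with Corollary~\ref{cor_action_on_diam}, using Lemma~\ref{lem:pos_diamond} to identify $\mathsf{F}_{\Theta}^{\geq 0}$ with the closed standard diamond $\overline{D}_\Theta$.

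\emph{Existence.} By Theorem~\ref{thm:proximal}(\ref{item1_thm_proximal}), $g$ acts proximally on $\mathsf{F}_\Theta$. Its attracting fixed point $g^+$ lies in the standard diamond $D_\Theta\subset\overline{D}_\Theta=\mathsf{F}_{\Theta}^{\geq 0}$. So $g^+$ is a fixed point of $g$ in the non-negative part.

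\emph{Uniqueness.} Let $z\in \mathsf{F}_{\Theta}^{\geq 0}=\overline{D}_\Theta$ be fixed by~$g$. The repelling point $g^-$ lies in the opposite diamond $D_{\Theta}^{\vee}$. As in the proof of Corollary~\ref{cor_action_on_diam}, every point of $\overline{D}_\Theta$ is transverse to~$g^-$, so $\overline{D}_\Theta$ lies in the basin of attraction of~$g^+$. Hence $g^n\cdot z\to g^+$. Since $g^n\cdot z=z$ for all~$n$, we get $z=g^+$. Equivalently, Corollary~\ref{cor_action_on_diam} shows that $g^n\cdot\overline{D}_\Theta$ shrinks to $\{g^+\}$, while each $g^n\cdot\overline{D}_\Theta$ contains~$z$.

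\emph{Main obstacle.} The only delicate point is that the whole closed diamond $\overline{D}_\Theta$, not just the open one, is transverse to~$g^-$. I would take this from the diamond facts recalled in Section~\ref{sec_diamonds}. Choose $x\in D_\Theta$; then $(g^-,p_\Theta,x,p'_\Theta)$ is positive, so by item~(3) of that section $\overline{D}(p_\Theta,p'_\Theta)$ is contained in a diamond with extremity~$g^-$. Points of such a diamond are transverse to its extremities, which gives the required transversality to~$g^-$.
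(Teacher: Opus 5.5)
Your existence and uniqueness steps are the paper's argument. The paper calls this corollary an immediate consequence of Theorem~\ref{thm:proximal} ($g^+\in D_\Theta$, $g^-\in D_\Theta^\vee$) and Corollary~\ref{cor_action_on_diam}, whose proof records that $\overline{D}_\Theta$ lies in the basin of attraction of $g^+$. A $g$-fixed point in $\overline{D}_\Theta$ must therefore equal $g^+$. If you simply cite that fact, your proof is complete.

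The justification in your ``main obstacle'' paragraph, however, does not work as written.

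\begin{itemize}
\item Item~(3) of Section~\ref{sec_diamonds} applied to $(f_1,f_2,f_3,f_4)=(g^-,p_\Theta,x,p'_\Theta)$ gives $\overline{D}(f_2,f_3)=\overline{D}(p_\Theta,x)\subset D(g^-,p'_\Theta)$. This concerns a proper sub-diamond of $D_\Theta$, not $\overline{D}(p_\Theta,p'_\Theta)$; its closure does not even contain $p'_\Theta$.
\item Varying $x$, or adding the other cyclic shift $(p_\Theta,x,p'_\Theta,g^-)$, does not recover $\overline{D}_\Theta$. In the Lagrangian chart of Section~\ref{sec:symplectic} with $n=2$, where $\overline{D}_\Theta=\{0\le M\le \Id\}$, the closed sub-diamonds you obtain are $\{0\le M\le X\}$ and $\{X\le M\le \Id\}$ with $0<X<\Id$. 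The boundary point $\operatorname{diag}(0,1)$ lies in none of them.
\end{itemize}

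To use item~(3), $p_\Theta$ and $p'_\Theta$ must be the middle pair of the quadruple. Choose $y\in D_\Theta^\vee$ with $(p'_\Theta,g^-,y,p_\Theta)$ positive, for instance $y$ in the diamond with extremities $g^-,p_\Theta$ contained in $D_\Theta^\vee$. The cyclically shifted quadruple $(y,p_\Theta,p'_\Theta,g^-)$ is then positive, and item~(3) gives $\overline{D}_\Theta\subset D(y,g^-)$. This is exactly the argument in the proof of Theorem~\ref{thm:positive}(\ref{item1bis_thm:positive}) with $x=g^-$. Since points of a diamond are transverse to its extremities, every point of $\overline{D}_\Theta$ is transverse to $g^-$.
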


\begin{corollary}\label{cor:conncomp}
  Let \(\mathcal{O}\subset \mathsf{F}_\Theta\) be the affine chart with respect to~\(p_\Theta\) (\ie the set of elements in \(\mathsf{F}_{\Theta}\) that are transverse to~\(p_\Theta\)) and let \(\mathcal{O}' \subset \mathsf{F}_\Theta\) be the affine chart with respect to~\(p_{\Theta}^{\prime}\).
Then the set $\mathsf{F}_{\Theta}^{>0}$ is a connected component of $\mathcal{O} \cap \mathcal{O}'$.
\end{corollary}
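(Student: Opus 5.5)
The plan is to transport item~(\ref{item2_thm:positive}) of Theorem~\ref{thm:positive}, which says that $S=G_\Theta^{>0}$ is a connected component of $\Omega\cap\Omega^{\opp}$, down to $\mathsf{F}_\Theta$ along the orbit map $f\colon g\mapsto g\cdot p_\Theta$. By Lemma~\ref{lem:pos_diamond} we have $\mathsf{F}_\Theta^{>0}=D_\Theta=f(U_\Theta^{\opp>0})$, so it suffices to show three things about $D_\Theta$: it lies in $\mathcal{O}\cap\mathcal{O}'$, it is open and connected, and it is closed in $\mathcal{O}\cap\mathcal{O}'$.

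For the first point, $D_\Theta=U_\Theta^{>0}\cdot p'_\Theta$, and \cite[Corollary~10.19]{GW_pos} gives that $u\cdot p'_\Theta$ is transverse to $p'_\Theta$ for every $u\in U_\Theta^{>0}$, so $D_\Theta\subset\mathcal{O}'$. Symmetrically, $D_\Theta=U_\Theta^{\opp>0}\cdot p_\Theta$ and the opposite statement give $D_\Theta\subset\mathcal{O}$. Alternatively, a point of a diamond with extremities $p_\Theta,p'_\Theta$ is transverse to both extremities, since the corresponding triple is positive. Next, $D_\Theta$ is the image of the open connected set $U_\Theta^{\opp>0}$ under the diffeomorphism $f|_{U_\Theta^{\opp}}$ onto the open chart $\mathcal{O}'$. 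Connectedness of $U_\Theta^{\opp>0}$ follows because it is the image of $S$ under the continuous projection $u\ell v\mapsto v$, and $S$ is connected. Hence $D_\Theta$ is open and connected.

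The main step is closedness. Take $x\in\overline{D}_\Theta\cap\mathcal{O}\cap\mathcal{O}'$. Since $x\in\mathcal{O}'$, write $x=v\cdot p_\Theta$ with $v\in U_\Theta^{\opp}$. Pick $v_n\in U_\Theta^{\opp>0}$ with $v_n\cdot p_\Theta\to x$; because $f|_{U_\Theta^{\opp}}$ is a diffeomorphism onto $\mathcal{O}'$, we get $v_n\to v$, so $v\in U_\Theta^{\opp\geq0}$. Now $v\cdot p_\Theta=x$ is transverse to $p_\Theta$, so by the characterization used in the proof of Theorem~\ref{thm:positive}(\ref{item2_thm:positive}) (\cite[Section~13.1]{GW_pos}: a non-negative unipotent element moving the base point to a transverse point is positive), $v\in U_\Theta^{\opp>0}$, i.e.\ $x\in D_\Theta$. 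Thus $D_\Theta$ is clopen and connected in $\mathcal{O}\cap\mathcal{O}'$, hence a connected component.

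The only potentially delicate point is matching the transversality condition in that cited criterion to the right base point, since here we are in the opposite setting $v\in U_\Theta^{\opp\geq0}$ acting on $p_\Theta$. This is exactly the symmetric version invoked for $v$ in the proof of Theorem~\ref{thm:positive}(\ref{item2_thm:positive}), so it can be used directly. As a cross-check, one can also argue that $f$ maps $\Omega\cap\Omega^{\opp}$ onto $\mathcal{O}\cap\mathcal{O}'$ and that $S$ is saturated with respect to right multiplication by $L_\Theta^\circ U_\Theta$, which would deduce the statement directly from Theorem~\ref{thm:positive}.
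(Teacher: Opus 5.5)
Your main argument is correct. The paper gives no separate proof: it presents the statement as an immediate consequence of the preceding results, namely Theorem~\ref{thm:positive}(\ref{item2_thm:positive}), which says that $S=G_\Theta^{>0}$ is a connected component of $\Omega\cap\Omega^{\opp}$, together with Lemma~\ref{lem:pos_diamond}.

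You do not transfer that group-level statement. Instead you rerun its open/closed argument directly in $\mathsf{F}_\Theta$, with the same ingredients:
\begin{itemize}
\item transversality from \cite[Corollary~10.19]{GW_pos};
\item the criterion of \cite[Section~13.1]{GW_pos};
\item closedness of $U_\Theta^{\opp\geq 0}$;
\item the fact that $f|_{U_\Theta^{\opp}}$ is a homeomorphism onto $\mathcal{O}'$.
\end{itemize}
This is more self-contained. It also sidesteps the passage from $G$ to $\mathsf{F}_\Theta$, which is the one place where care is needed.

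One minor point: deriving connectedness of $U_\Theta^{\opp>0}$ from that of $S$ runs backwards. The connectedness of $S$ asserted in the paper rests on that of its factors. Cite directly the parametrization of $U_\Theta^{>0}$ (and of $U_\Theta^{\opp>0}$) by products of open cones from \cite{GW_pos}.

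Your closing cross-check, however, is wrong as stated.
\begin{itemize}
\item The map $f$ does not send $\Omega\cap\Omega^{\opp}$ into $\mathcal{O}\cap\mathcal{O}'$. Any representative $\dot{w}_\Delta$ lies in $\Omega\cap\Omega^{\opp}$, but $f(\dot{w}_\Delta)=p'_\Theta\notin\mathcal{O}'$.
\item $S$ is not saturated under right multiplication by $L_\Theta^\circ U_\Theta$. For $v\in U_\Theta^{\opp>0}$, $\ell\in L_\Theta^\circ$ and $u\in U_\Theta^{>0}$, one has $v\ell=(v\ell u)u^{-1}\in S\,U_\Theta$. Yet $v\ell\cdot p'_\Theta=p'_\Theta$, so $v\ell$ is not in $\Omega\cap\Omega^{\opp}$, let alone in $S$.
\end{itemize}

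A correct transfer from the group statement goes through a slice. Let $C$ be the component of $\mathcal{O}\cap\mathcal{O}'$ containing $D_\Theta$, and let $V=(f|_{U_\Theta^{\opp}})^{-1}(C)$. Then $U_\Theta^{>0}L_\Theta^\circ V$ is a connected subset of $\Omega\cap\Omega^{\opp}$. Indeed, for $g=u\ell v$ one has $g\cdot p'_\Theta=u\cdot p'_\Theta$, and $g\cdot p_\Theta$ is transverse to $p_\Theta$ if and only if $v\cdot p_\Theta\in\mathcal{O}$. This set contains $S$, hence equals $S$. Uniqueness of the decomposition $U_\Theta\times L_\Theta\times U_\Theta^{\opp}\to G$ then forces $V=U_\Theta^{\opp>0}$, i.e.\ $C=D_\Theta$.

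Since the cross-check was only offered as an aside, your main proof stands; just drop or repair that remark.
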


\subsection{The positive part in other flag varieties}
\label{sec:pos_part_other}

Let $\Theta' \subset \Delta$ be any subset and ${\mathsf{F}}_{\Theta'}$ the corresponding flag variety. 
In Section~\ref{sec_flag_var}, we introduced the \(G^\circ\)-equivariant projections~\(\pi_{\Theta}^{\Xi}\colon \mathsf{F}_\Xi \to \mathsf{F}_\Theta\) between flag varieties.

When \(S\)~is a semigroup of~\(
G^\circ\) and \(x\)~is a point of~\(\mathsf{F}_{\Theta'}\) (\resp \(A\)~is a subset of~\(\mathsf{F}_{\Theta'}\)), we call (by a slight abuse of terminology),  the subset \(S\cdot x = \{ s\cdot x\}_{s\in S}\) (\resp \(S\cdot A=\{s\cdot x\}_{s\in S, x\in A}\)) the \(S\)-orbit of~\(x\) (\resp the \(S\)-orbit of~\(A\)).  

\begin{definition}
The positive part of the flag variety ${\mathsf{F}}_{\Theta'}$ is
\[{\mathsf{F}}_{\Theta'}^{>0} \coloneq   \{ g\cdot x\mid g \in G_{\Theta}^{>0}, x\in \pi_{\Theta'}((\pi_{\Theta})^{-1}(p_{\Theta}))\}, 
\]
where $\pi_\Theta\colon {\mathsf{F}}_{\Delta} \rightarrow {\mathsf{F}}_{\Theta}$ and $\pi_{\Theta'}\colon {\mathsf{F}}_{\Delta} \rightarrow {\mathsf{F}}_{\Theta'}$ are the natural projections. 
\end{definition}

Let $\Xi$ be any subset of~\(\Delta\) containing $\Theta \cup \Theta'$, by the compatibilities properties of the projections between flag varieties (\cf Section~\ref{sec_flag_var}), one also has
\[{\mathsf{F}}_{\Theta'}^{>0} = \{ g\cdot x\mid g \in G_{\Theta}^{>0}, x\in \pi_{\Theta'}^{\Xi}((\pi_{\Theta}^{\Xi})^{-1}(p_{\Theta}))\}.
\]

We have the following description of the positive part of the flag variety ${\mathsf{F}}_{\Theta'}$.

\begin{proposition}
The following holds: 
\begin{enumerate}[leftmargin=*]
\item The positive part ${\mathsf{F}}_{\Theta'}^{>0}$ of the flag variety ${\mathsf{F}}_{\Theta'}$ is the $G_{\Theta}^{>0}$-orbit of the subvariety $\pi_{\Theta'}((\pi_{\Theta})^{-1}(p_{\Theta})) = \pi^{\Xi}_{\Theta'}((\pi^{\Xi}_{\Theta})^{-1}(p_{\Theta})) = L_{\Theta}^{\circ}\cdot p_{\Theta'}$. 
\item The positive part ${\mathsf{F}}_{\Theta'}^{>0}$ of the flag variety ${\mathsf{F}}_{\Theta'}$ is the $G_{\Theta}^{>0}$-orbit of any point $q \in \pi_{\Theta'}((\pi_{\Theta})^{-1}(p_{\Theta}))$. 
\item The positive part ${\mathsf{F}}_{\Theta'}^{>0}$ of the flag variety ${\mathsf{F}}_{\Theta'}$ is the $U_{\Theta}^{\opp >0}$-orbit of the subvariety $\pi_{\Theta'}((\pi_{\Theta})^{-1}(p_{\Theta}))$. More precisely, it is the disjoint union of the $U_{\Theta}^{\opp >0}$-orbits of points in $\pi_{\Theta'}((\pi_{\Theta})^{-1}(p_{\Theta}))$. 
\end{enumerate}
\end{proposition}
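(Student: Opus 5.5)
The plan is to reduce everything to the decompositions of Theorem~\ref{thm:decomposition} and to the fact that $P_\Theta$ stabilizes the fiber over $p_\Theta$. Write $\Xi=\Theta\cup\Theta'$ and $F\coloneq\pi_{\Theta'}^{\Xi}((\pi_{\Theta}^{\Xi})^{-1}(p_\Theta))$.

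For item~(1), by the compatibility of projections (Section~\ref{sec:maps}), $F$ does not depend on the choice of $\Xi$. The fiber $(\pi_{\Theta}^{\Xi})^{-1}(p_\Theta)$ is a single $L_{\Theta}^{\circ}$-orbit by fact~(1) of Section~\ref{sec:maps}, namely the orbit of $p_\Xi$. Applying the $G^\circ$-equivariant map $\pi_{\Theta'}^{\Xi}$, and using that $\pi_{\Theta'}^{\Xi}(p_\Xi)=p_{\Theta'}$, gives $F=L_{\Theta}^{\circ}\cdot p_{\Theta'}$. The equality ${\mathsf F}_{\Theta'}^{>0}=G_{\Theta}^{>0}\cdot F$ is then just the definition.

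For item~(2), fix $q\in F$ and write $q=\ell_0\cdot p_{\Theta'}$ with $\ell_0\in L_{\Theta}^{\circ}$. Then $G_{\Theta}^{>0}\cdot F=G_{\Theta}^{>0}L_{\Theta}^{\circ}\cdot p_{\Theta'}=G_{\Theta}^{>0}L_{\Theta}^{\circ}\ell_0^{-1}\cdot q$.
\begin{itemize}
\item The key point is that $G_{\Theta}^{>0}L_{\Theta}^{\circ}=G_{\Theta}^{>0}$. This follows from Theorem~\ref{thm:decomposition} written as $G_{\Theta}^{>0}=U_{\Theta}^{\opp>0}L_{\Theta}^{\circ}U_{\Theta}^{>0}$, together with the fact that $L_{\Theta}^{\circ}$ normalizes $U_{\Theta}^{>0}$.
\item The latter holds because $U_{\Theta}^{\ge0}$ is $L_\Theta^\circ$-invariant, the cones $c_\alpha$ being $L_\Theta^\circ$-invariant, and hence so is its interior.
\item Alternatively, $G_{\Theta}^{\ge0}G_{\Theta}^{>0}\subset G_{\Theta}^{>0}$ (Theorem~\ref{thm:positive}(\ref{item4_thm:positive})) has the symmetric right version by the same argument, since $L_{\Theta}^{\circ}\subset G_{\Theta}^{\ge0}$.
\end{itemize}
Hence ${\mathsf F}_{\Theta'}^{>0}=G_{\Theta}^{>0}\cdot q$.

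For item~(3), write again $G_{\Theta}^{>0}=U_{\Theta}^{\opp>0}L_{\Theta}^{\circ}U_{\Theta}^{>0}$. The group $P_\Theta^\circ\supset L_{\Theta}^{\circ}U_{\Theta}$ fixes $p_\Theta$ and therefore preserves the fiber $(\pi_{\Theta}^{\Xi})^{-1}(p_\Theta)$. Consequently $L_{\Theta}^{\circ}U_{\Theta}^{>0}\cdot F\subset F$, and in fact equality holds since $e\in\overline{U_{\Theta}^{>0}}$. Cleaner: $L_\Theta^\circ U_\Theta^{>0}\cdot F\supset L_\Theta^\circ u\cdot F=L_\Theta^\circ\cdot F' $ for the fiber itself — in any case $F\subset$ it because $u$ acts on $F$ by a bijection. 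This gives ${\mathsf F}_{\Theta'}^{>0}=U_{\Theta}^{\opp>0}\cdot F$.

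For disjointness, suppose $v\cdot q=v'\cdot q'$ with $v,v'\in U_{\Theta}^{\opp>0}$ and $q,q'\in F$. Projecting to ${\mathsf F}_\Theta$ gives $v\cdot p_\Theta=v'\cdot p_\Theta$, so $v=v'$ because $f|_{U_{\Theta}^{\opp}}$ is injective (Section~\ref{sec_flag_var}); hence $q=q'$. Distinct points of $F$ thus have disjoint orbits, and each map $v\mapsto v\cdot q$ is injective.

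The main obstacle is the bijectivity claim in item~(3), that $L_\Theta^\circ U_\Theta^{>0}$ maps $F$ onto $F$. I would settle it as follows: $U_\Theta$ acts on the fiber by a continuous action of a connected group. For $u\in U_\Theta^{>0}$, $u$ is a bijection of $F$, so $u\cdot F=F$ and thus $U_{\Theta}^{>0}\cdot F=F$.
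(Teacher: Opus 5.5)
Your items (1) and (2), and the equality $\mathsf{F}_{\Theta'}^{>0}=U_{\Theta}^{\opp>0}\cdot F$ in (3), are correct and follow the paper's route: the decomposition $G_\Theta^{>0}=U_\Theta^{\opp>0}L_\Theta^\circ U_\Theta^{>0}$ together with $F=L_\Theta^\circ\cdot p_{\Theta'}$. For the equality in (3), the justification that works is the one you end with. The set $L_\Theta^\circ U_\Theta^{>0}$ lies in the group $P_\Theta\cap G^\circ$, which preserves $F$, so each of its elements maps $F$ onto $F$. The remarks about $e\in\overline{U_\Theta^{>0}}$ and the connectedness of $U_\Theta$ prove nothing and can be dropped.

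The genuine gap is the disjointness step in (3). Projecting to $\mathsf{F}_\Theta$ requires a $G^\circ$-equivariant map $\mathsf{F}_{\Theta'}\to\mathsf{F}_\Theta$, which exists only when $\Theta\subset\Theta'$; in that case your argument is fine. When $\Theta'\subset\Theta$ one has $F=\{p_{\Theta'}\}$, so disjointness is vacuous. When $\Theta$ and $\Theta'$ are incomparable, however, the projection step has no meaning. Worse, the stronger conclusion you draw from it, injectivity of $v\mapsto v\cdot q$, is false.

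For instance, let $G=\Sp_4(\R)$ with the positive structure of Section~\ref{sec:symplectic} and $\mathsf{F}_{\Theta'}=\mathbb{P}(\R^4)$, so that $F=\mathbb{P}(L_0)$. Write $u_N=\bigl(\begin{smallmatrix}\Id&0\\N&\Id\end{smallmatrix}\bigr)$. Take $M=\Id$, $N=\mathrm{diag}(0,1)$ and $x=(1,0)^T$. Then $u_M\neq u_{M+N}$ both lie in $U_\Theta^{\opp>0}$, yet $u_{M+N}\cdot[(x,0)]=[(x,Mx)]=u_M\cdot[(x,0)]$, because $Nx=0$.

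The paper argues instead that $F$ and $u\cdot F$ lie over the transverse points $p_\Theta$ and $u\cdot p_\Theta$, hence are disjoint for $u\in U_\Theta^{\opp>0}$. Note that handling $v\cdot q=v'\cdot q'$ this way would require applying it to $v'^{-1}v$, which in general lies only in $U_\Theta^{\opp}$. In the example above, $v'^{-1}v=u_N$ and $[(x,0)]\in F\cap u_N\cdot F$.

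A uniform repair goes as follows. Take $Z\in\mathfrak{a}$ with $\beta(Z)=0$ for $\beta\in\Delta\smallsetminus\Theta$ and $\alpha(Z)=1$ for $\alpha\in\Theta$, so that $Z\in\mathfrak{t}_\Theta$.
\begin{enumerate}
\item The element $\exp(tZ)$ lies in $P_{\Theta'}$ and commutes with $L_\Theta^\circ$, so it fixes $F=L_\Theta^\circ\cdot p_{\Theta'}$ pointwise.
\item For every $v\in U_\Theta^{\opp}$, one has $\exp(tZ)v\exp(-tZ)\to e$ as $t\to+\infty$.
\item Hence $\exp(tZ)\cdot(v\cdot q)\to q$, so $q$ is determined by the point $v\cdot q$.
\end{enumerate}
This shows that the $U_\Theta^{\opp>0}$-orbits of distinct points of $F$ are disjoint.
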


\begin{proof}
The first statement is immediate from the definition and the fact that \(L_{\Theta}^{\circ}\)~acts transitively on the fibers of~\(\pi_\Theta\). The second statement and the first assertion in the third statement follow from the properties stated in Section~\ref{sec:maps}, the decomposition $G_{\Theta}^{>0} = U_{\Theta}^{\opp >0} L_{\Theta}^{\circ} U_\Theta^{> 0}$, and the fact that the action of~$L_{\Theta}^{\circ}$ on $\pi^{\Xi}_{\Theta'}((\pi^{\Xi}_{\Theta})^{-1}(p_{\Theta}))$ is transitive; in particular,  \(\pi^{\Xi}_{\Theta'}((\pi^{\Xi}_{\Theta})^{-1}(p_{\Theta})) = L_{\Theta}^{\circ}\cdot p_{\Theta'}\).

For the second assertion in the third statement, we show that for any $u \in   U_{\Theta}^{\opp >0}$ and any $v \in \pi_{\Theta'}((\pi_{\Theta})^{-1}(p_{\Theta}))$, $u \cdot v \notin \pi_{\Theta'}((\pi_{\Theta})^{-1}(p_{\Theta}))$. Let us recall that for every $u\in U_{\Theta}^{\opp >0}$, the point $u\cdot p_\Theta$ is transverse to $p_\Theta$. If there were an element $v\in \pi_{\Theta'}((\pi_{\Theta})^{-1}(p_\Theta))$, and an element $u \in U_{\Theta}^{\opp >0}$ with $u \cdot v \in \pi_{\Theta'}((\pi_{\Theta})^{-1}(p_\Theta))$, we would have 
that $\pi_{\Theta'}((\pi_{\Theta})^{-1}(p_\Theta)) \cap \pi_{\Theta'}((\pi_{\Theta})^{-1}(u\cdot p_\Theta)) \neq \emptyset$, since by the $G$-equivariance of the projection maps $u\cdot v \in \pi_{\Theta'}((\pi_{\Theta})^{-1}(u\cdot p_\Theta))$. But this is a contradiction, since $p_\Theta$ and $u \cdot p_\Theta$ correspond to opposite parabolic subgroups. 
\end{proof}

\subsection{Comparison for different positive structures}\label{sec:twostructures}
As explained in \cite{GW_pos} a semisimple Lie group admits a positive structure if and only if each of its simple factors admits a positive structure. 
For the following discussion we restrict to simple Lie groups.  The simple split real Lie groups admitting two different positive structures are those with Dynkin diagram $B_n$, $C_n$, or $F_4$; there are then one positive structure with respect to~$\Delta$ and another with respect to a proper subset $\Theta \subset \Delta$. 
Thus they have two positive semigroups $G_\Delta^{>0}$ and $G_{\Theta}^{>0}$. We denote the corresponding positive parts of the flag variety $\mathsf{F}_{\Theta'}$ by 
${\mathsf{F}}_{\Theta', \Delta}^{>0}$ and ${\mathsf{F}}_{\Theta'}^{>0}$ respectively. 

\begin{theorem}
Let~$G$ be a simple split real Lie group admitting a positive structure with respect to a proper subset $\Theta\subset \Delta$. 
Let $\Theta'\subset\Delta$ be a subset and ${\mathsf{F}}_{\Theta'}$ the corresponding flag variety. Then we have a proper inclusion 
${\mathsf{F}}_{\Theta', \Delta}^{>0} \subset {\mathsf{F}}_{\Theta'}^{>0}$.
\end{theorem}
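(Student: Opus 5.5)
First I would prove the inclusion \(\mathsf{F}_{\Theta',\Delta}^{>0}\subset \mathsf{F}_{\Theta'}^{>0}\). The plan is to show \(G_{\Delta}^{>0}\subset G_{\Theta}^{>0}\) and then compare the sets of base points being translated. For the semigroups, Theorem~\ref{thm:decomposition} applied to both structures gives
\[
G_{\Delta}^{>0}=U_{\Delta}^{>0}\, A^\circ\, U_{\Delta}^{\opp>0},\qquad G_{\Theta}^{\geq 0}=U_{\Theta}^{\geq 0}L_{\Theta}^{\circ}U_{\Theta}^{\opp\geq 0}.
\]
Here \(A^\circ=L_\Delta^\circ\). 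The generators of \(U_\Delta^{\geq0}\) are \(\exp(tX_\beta)\) with \(t\geq0\) and \(\beta\in\Delta\). For \(\beta\in\Delta\smallsetminus\Theta\) these lie in \(L_\Theta^\circ\). For \(\beta\in\Theta\) we have \(X_\beta\in c_\beta\): either \(\mathfrak{u}_\beta=\mathfrak{g}_\beta\), or \(\beta\) is special and \(X_\beta\) lies in the closed quadrant in \(c_\beta\) (Section~\ref{sec_theta_prin}). The same holds for the opposite generators. Hence \(G_{\Delta}^{\geq0}\subset G_{\Theta}^{\geq0}\). Taking interiors gives \(G_{\Delta}^{>0}\subset G_{\Theta}^{>0}\).

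Next I would compare base sets. In \(\mathsf{F}_{\Theta',\Delta}^{>0}\) one translates \(\pi_{\Theta'}(\pi_\Delta^{-1}(p_\Delta))=\{p_{\Theta'}\}\). This point lies in \(\pi_{\Theta'}(\pi_\Theta^{-1}(p_\Theta))\), since \(p_\Delta\) projects to \(p_\Theta\). So \(\mathsf{F}_{\Theta',\Delta}^{>0}=G_\Delta^{>0}\cdot p_{\Theta'}\subset G_\Theta^{>0}\cdot p_{\Theta'}=\mathsf{F}_{\Theta'}^{>0}\).

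For strictness I would use a dimension or topology argument. Both sets are open subsets of \(\mathsf{F}_{\Theta'}\), so an open-set count alone cannot separate them. The cleanest route is a closure argument. By Theorem~\ref{thm:pospart_intro}, \(\mathsf{F}_{\Theta'}^{>0}\) is the image of \(\pi_{\Theta}^{-1}(D_\Theta)\). Meanwhile \(\mathsf{F}_{\Theta',\Delta}^{>0}=\pi_{\Theta'}(D_\Delta)\), where \(D_\Delta\) is the full-flag positive part. I plan to first prove strictness in \(\mathsf{F}_\Delta\) and then push it down to general \(\Theta'\).

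\begin{enumerate}
\item \emph{Strictness in \(\mathsf{F}_\Delta\).} Here \(\mathsf{F}_\Delta^{>0}=\pi_\Theta^{-1}(D_\Theta)\), and this fibers over \(D_\Theta\) with compact fiber \(K_\Theta/K_\Delta\). The set \(D_\Delta\) is an open ball, relatively compact in the affine chart transverse to \(p'_\Delta\). But \(\pi_\Theta^{-1}(D_\Theta)\) contains whole compact fibers, such as \(\pi_\Theta^{-1}(u\cdot p'_\Theta)\), which meet the complement of that chart. This is because a fiber \(L_\Theta^\circ\)-flag variety contains points non-transverse to \(p'_\Delta\) whenever \(\Theta\neq\Delta\). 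So the two sets differ.

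\item \emph{General \(\Theta'\).} Choose \(x\in D_\Theta\) and a point \(z\) in the fiber over \(x\) that is non-transverse to \(p_{\Theta'}'\) in \(\mathsf{F}_{\Theta'}\). The point \(z=u\cdot q\) with \(q\in L_\Theta^\circ\cdot p_{\Theta'}\) lies in \(\mathsf{F}_{\Theta'}^{>0}\). On the other hand, every point of \(\mathsf{F}_{\Theta',\Delta}^{>0}=U_\Delta^{>0}\cdot p'_{\Theta'}\) is transverse to \(p'_{\Theta'}\), by the analogue of \cite[Cor.~10.19]{GW_pos} for \(\Delta\). Hence \(z\notin\mathsf{F}_{\Theta',\Delta}^{>0}\).
\end{enumerate}

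The main obstacle is the case when \(\Theta'\subset\Theta\). Then the fiber \(\pi_{\Theta'}(\pi_\Theta^{-1}(p_\Theta))\) is a single point, so non-transverse points are not available. In that case I would instead use the \(\Theta\)-principal \(\mathfrak{sl}_2\), or an explicit element \(\exp(tE_\alpha)\) with \(E_\alpha\) in the interior of \(c_\alpha\) for a special root \(\alpha\), chosen so that its coefficients \(\lambda_i\) are not realizable in \(U_\Delta^{>0}\). I would then exhibit a point of \(D_\Theta\) whose projection is not transverse to some Schubert divisor that is avoided by \(\mathsf{F}^{>0}_{\Theta',\Delta}\). Concretely, I would test the symplectic case \(C_n\), \(\Theta=\{\alpha_n\}\), \(\Theta'=\{\alpha_1\}\) via minors, and then argue uniformly using the special root structure.
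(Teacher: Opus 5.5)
\textbf{Where the proposal is correct.} Your inclusion argument is sound and differs from the paper's only cosmetically. You use $G_{\Delta}^{\geq 0}\subset G_{\Theta}^{\geq 0}$ and take interiors; the paper uses $U_{\Delta}^{\opp>0}\subset U_{\Theta}^{\opp>0}L_{\Theta}^{\circ}$. Both tacitly assume the two pinnings are compatible.

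Your strictness argument is also correct when $\Theta'\not\subset\Theta$. It is a sharper form of the paper's remark that the fiber $L_{\Theta}^{\circ}\cdot p_{\Theta'}$ produces extra points. You should justify that $z$ exists, as follows.
\begin{itemize}
\item Write $x=v\cdot p_\Theta$ with $v\in U_{\Theta}^{\opp>0}$. Since $U_{\Theta}^{\opp}\subset P_{\Theta'}^{\opp}$, the element $v$ fixes $p'_{\Theta'}$.
\item Take $z=v\dot{s}_\beta\cdot p_{\Theta'}$ with $\beta\in\Theta'\smallsetminus\Theta$ and $\dot{s}_\beta\in K_\Theta$.
\item Then $\dot{s}_\beta\cdot p_{\Theta'}$ is not transverse to $p'_{\Theta'}=\dot{w}_\Delta\cdot p_{\Theta'}$, because $s_\beta$ is not in the subgroup of $W$ generated by the $s_\gamma$, $\gamma\in\Delta\smallsetminus\Theta'$ (recall $\iota=\mathrm{id}$ in types $B_n$, $C_n$, $F_4$).
\end{itemize}
A minor correction: $D_\Delta$ is not relatively compact in the chart transverse to $p'_\Delta$, since $p'_\Delta\in\overline{D}_\Delta$. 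You only need that $D_\Delta$ lies in that chart.

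\textbf{The gap.} The case $\emptyset\neq\Theta'\subset\Theta$ is not proved. (For $\Theta'=\emptyset$ both sets are the single point $\mathsf{F}_\emptyset$, so that case must be excluded from the statement anyway.) This case contains $\Theta'=\Theta$, which is the only such case for $C_n$, and for it you give only a plan. Topology cannot separate the two sets, since both are open balls (Theorem~\ref{thm:nonneg_ball} and the totally positive case).

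Your Schubert-divisor plan also fails already in the first example. Take $\Sp_4(\R)$, $\Theta=\Theta'=\{\alpha_2\}$, the chart $S\mapsto\bigl(\begin{smallmatrix}\Id&S\\0&\Id\end{smallmatrix}\bigr)\cdot L_\infty$, and the pinning $x_{\alpha_1}(t)=\operatorname{diag}(\Id+tE_{12},\Id-tE_{21})$, $x_{\alpha_2}(t)=\bigl(\begin{smallmatrix}\Id&tE_{22}\\0&\Id\end{smallmatrix}\bigr)$, where $E_{ij}$ are elementary matrices.
\begin{itemize}
\item The point $x_{\alpha_1}(t_1)x_{\alpha_2}(t_2)x_{\alpha_1}(t_3)x_{\alpha_2}(t_4)\cdot L_\infty$ corresponds to $S=t_2v_1v_1^{T}+t_4v_2v_2^{T}$, with $v_1=(t_1,1)^{T}$ and $v_2=(t_1+t_3,1)^{T}$.
\item Hence $\mathsf{F}^{>0}_{\Theta,\Delta}=\{S\in\mathrm{Sym}^{+}_{2}(\R)\mid S_{12}>0\}$, while $D_\Theta=\mathrm{Sym}^{+}_{2}(\R)$.
\item Both sets are invariant under $S\mapsto sS$ for $s>0$. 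If $N$ has a positive eigenvalue, then $\det(sS_0-N)=0$ for some $s>0$, for every $S_0\in\mathrm{Sym}^{+}_{2}(\R)$.
\item So the Lagrangian corresponding to $N$ is transverse to all of $\mathsf{F}^{>0}_{\Theta,\Delta}$ iff $N\le 0$, iff it is transverse to all of $D_\Theta$. Lagrangians outside the chart behave the same way.
\end{itemize}

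Your test point is actually a good one. The point $\exp(sE_{\alpha_2})\cdot L_\infty$ corresponds to a diagonal $S\in\mathrm{Sym}^{+}_{2}(\R)$, so it lies in $D_\Theta$ but not in $\mathsf{F}^{>0}_{\Theta,\Delta}$. However, what excludes it is $S_{12}=0$, which is a sign condition, not a transversality condition.

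What does separate the two sets here is symmetry. The set $\mathsf{F}^{>0}_{\Theta'}=\pi^{\Theta}_{\Theta'}(D_\Theta)$ is $L_{\Theta}^{\circ}$-invariant. By contrast, $\dot{s}_{\alpha_1}=\operatorname{diag}(A,A)$ with $A=\bigl(\begin{smallmatrix}0&-1\\1&0\end{smallmatrix}\bigr)$ acts by $S\mapsto ASA^{T}$ and reverses the sign of $S_{12}$. To finish, you need a general argument of this type, for instance that $\mathsf{F}^{>0}_{\Theta',\Delta}$ is never $L_{\Theta}^{\circ}$-invariant when $\emptyset\neq\Theta'\subset\Theta$.

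Note that the paper's one-line strictness argument (an $\ell\in L_{\Theta}^{\circ}$ with $\ell\cdot p_{\Theta'}\notin U_{\Delta}^{\opp>0}\cdot p_{\Theta'}$) also works through the fiber $L_{\Theta}^{\circ}\cdot p_{\Theta'}$. In this case that fiber collapses to $\{p_{\Theta'}\}$, so this is exactly where an extra idea is required.
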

\begin{proof}
Since ${\mathsf{F}}_{\Delta, \Delta}^{>0}$ is a $U_{\Delta}^{\opp >0}$-orbit of $p_\Delta$, we have that ${\mathsf{F}}_{\Theta', \Delta}^{>0}$
is the $U_{\Delta}^{\opp >0}$-orbit of $\pi_{\Theta'}({p}_\Delta)=p_{\Theta'}$.  On the other hand ${\mathsf{F}}_{\Theta'}^{>0}$ is the $U_{\Theta}^{\opp> 0} L_{\Theta}^{\circ}$-orbit of ${p}_{\Theta'}\subset \pi_{\Theta'}((\pi_{\Theta})^{-1}({p}_\Theta))$. Since $U_{\Delta}^{\opp >0} \subset U_{\Theta}^{\opp > 0} L_{\Theta}^{\circ}$ this gives the inclusion.  Furthermore, there are clearly $\ell\in  L_{\Theta}^{\circ}$ such that $\ell\cdot p_{\Theta'} \notin U_{\Delta}^{\opp >0}\cdot p_{\Theta'}$.
\end{proof}

\subsection{The non-negative part of a flag variety} 
In the case of total positivity, it has been shown in \cite{GalashinKarpLam, GalashinKarpLam_Gr} that the non-negative part of any partial flag variety is homeomorphic to a closed ball. This is not true anymore in the case of positivity with respect to a proper subset $\Theta$ of $\Delta$.  The next statement gives a topological description of the positive and non-negative parts in \(\mathsf{F}_{\Theta'}\) when \(\Theta'\) is either contained in or contains~\(\Theta\). 

\begin{theorem}\label{thm:nonneg_ball}
Let~$G$ be a simple Lie group with a positive structure with respect to $\Theta \subset \Delta$. Let $\Theta' \subset \Delta$ be a subset and  $\mathsf{F}_{\Theta'}^{\geq 0}$ be the non-negative part in the flag variety $\mathsf{F}_{\Theta'}$. Then the following hold: 
\begin{enumerate}
\item\label{item1_thm:nonneg_ball} If $\Theta'$~is contained in~$ \Theta$, then $\mathsf{F}_{\Theta'}^{\geq 0}$ (\resp \(\mathsf{F}_{\Theta'}^{>0}\)) is homeomorphic to a closed (\resp open) ball. 
\item\label{item2_thm:nonneg_ball} If $\Theta'$ contains~$\Theta$, then $\mathsf{F}_{\Theta'}^{\geq 0}$ (\resp \(\mathsf{F}_{\Theta'}^{>0}\)) fibers, via the projection \(\pi^{\Theta'}_{\Theta}\), over the closed ball \(\overline{D}_\Theta\) (\resp the open ball~\(D_\Theta\)) with fiber $K_{\Theta}/K_{\Theta'}$, where $K_{\Theta}$ and $K_{\Theta'}$ are the maximal compact subgroups of the Levi subgroups $L_{\Theta}^{\circ}$ and $L_{\Theta'}^{\circ}$ respectively.  In particular, the non-negative part $\mathsf{F}_{\Theta'}^{\geq 0}$ (\resp the positive part \(F_{\Theta'}^{>0}\)) is homotopy equivalent to $K_{\Theta}/K_{\Theta'}$
\end{enumerate}
\end{theorem}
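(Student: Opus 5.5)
The plan is to reduce both statements to properties of the closed diamond $\overline{D}_\Theta=\mathsf{F}_\Theta^{\geq 0}$ (Lemma~\ref{lem:pos_diamond}). Then I show that $\overline{D}_\Theta$ and its images carry a linear contracting flow, and conclude with the topological criterion of Galashin--Karp--Lam. That criterion says: a compact subset $Q$ of a vector space, mapped into its own interior for all positive times by a linear contracting flow, is a closed ball whose interior is an open ball.

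\textbf{Identifying the sets.} From $G_\Theta^{>0}=U_\Theta^{\opp>0}L_\Theta^\circ U_\Theta^{>0}$ (Theorem~\ref{thm:decomposition}) and the $G^\circ$-equivariance of the projections, I get
\[\mathsf{F}_{\Theta'}^{>0}=\pi_{\Theta'}^{\Xi}\bigl((\pi^{\Xi}_\Theta)^{-1}(D_\Theta)\bigr),\qquad \Xi=\Theta\cup\Theta'.\]
Indeed, $U_\Theta^{\opp>0}$ maps the fiber over $p_\Theta$ onto the fiber over $u\cdot p_\Theta$, and $L_\Theta^\circ$ is transitive on fibers over $p_\Theta$.
\begin{itemize}
\item If $\Theta'\subset\Theta$, the fiber over $p_\Theta$ projects to the single point $p_{\Theta'}$, so $\mathsf{F}_{\Theta'}^{>0}=\pi^{\Theta}_{\Theta'}(D_\Theta)$. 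Since $\pi^\Theta_{\Theta'}$ is proper, this gives $\mathsf{F}_{\Theta'}^{\geq0}=\pi^\Theta_{\Theta'}(\overline{D}_\Theta)$. Since it is a submersion, hence open, $\pi^\Theta_{\Theta'}(D_\Theta)$ is open.
\item If $\Theta\subset\Theta'$, then $\mathsf{F}_{\Theta'}^{>0}=(\pi^{\Theta'}_\Theta)^{-1}(D_\Theta)$. Because $\pi^{\Theta'}_\Theta$ is an open, proper submersion, the closure of the preimage is the preimage of the closure, namely $(\pi^{\Theta'}_\Theta)^{-1}(\overline{D}_\Theta)$.
\end{itemize}

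\textbf{Contracting flow, case (\ref{item1_thm:nonneg_ball}).} Take $g_t=\exp(t(E+F))$ in the $\Theta$-principal $\mathfrak{sl}_2$. I first check that $g_t\in G_\Theta^{>0}$ for $t>0$. Since $G_\Theta^{>0}$ is an open semigroup containing $x(s)y(s)$ for $s>0$, and $g_t$ is a limit of products $(x(t/n)y(t/n))^n$, the closure gives $g_t\in G_\Theta^{\geq0}$. Then $g_t=g_{t/2}g_{t/2}$, and I use $G_\Theta^{\geq0}G_\Theta^{>0}\subset G_\Theta^{>0}$; if needed, I replace $E+F$ by an element for which membership of $g_t$ in $G_\Theta^{>0}$ is immediate.

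Next I show $g_t\cdot\overline{D}_\Theta\subset D_\Theta$. By Corollary~\ref{cor_proper}, $G_\Theta^{\geq0}\cdot p_\Theta$ is closed and contains $D_\Theta$, hence equals $\overline{D}_\Theta$. Therefore $g_t\cdot\overline{D}_\Theta\subset G_\Theta^{>0}\cdot p_\Theta=D_\Theta$.

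The one-parameter group $g_t$ is hyperbolic in $\mathfrak{sl}_2$, with fixed points $g^{\pm}$. Conjugating by an element $h$ of the $\mathfrak{sl}_2$-subgroup sends $(g^+,g^-)$ to $(p_\Theta,p'_\Theta)$ and $g_t$ to $\exp(ctD)$ for some $c>0$. In the affine chart of points transverse to $g^-$, parametrized by $\mathfrak{u}^{\opp}_\Theta$, this acts linearly by $\Ad\exp(ctD)$. This is a contraction because $\mathfrak{u}_\Theta^{\opp}$ has negative $D$-degrees.

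Theorem~\ref{thm:proximal} puts $g^-$ in $D_\Theta^\vee$, so $\overline{D}_\Theta$ is compact and contained in this chart. Projecting by $\pi^\Theta_{\Theta'}$, the chart in $\mathsf{F}_{\Theta'}$ transverse to $\pi(g^-)$ is $\mathfrak{u}^{\opp}_{\Theta'}$, a $D$-invariant quotient, and it carries the induced linear contraction. The compact set $Q=\pi(\overline{D}_\Theta)$ satisfies $g_t Q\subset\pi(D_\Theta)\subset\mathring Q$. The criterion then gives that $Q$ is a closed ball with interior $\pi(D_\Theta)$, an open ball.

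\textbf{Case (\ref{item2_thm:nonneg_ball}).} The map $\pi^{\Theta'}_\Theta$ is a smooth fiber bundle with fiber
\[L_\Theta^\circ/(L_\Theta^\circ\cap P_{\Theta'})\cong K_\Theta/K_{\Theta'},\]
where the identification comes from the Iwasawa decomposition in $L_\Theta^\circ$. Its restriction to $\overline{D}_\Theta$ (a closed ball by case~(\ref{item1_thm:nonneg_ball}) with $\Theta'=\Theta$) or to $D_\Theta$ is a bundle over a contractible base. It is therefore trivial and homotopy equivalent to the fiber.

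\textbf{Main obstacle.} I expect the main obstacle to be verifying the exact hypotheses of the ball criterion. Concretely, I must show that $g_t\in G_\Theta^{>0}$ for every $t>0$, and that the chart and its linear structure descend correctly under $\pi^\Theta_{\Theta'}$.
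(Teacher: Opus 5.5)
\textbf{Overall.} Your strategy matches the paper's. You use the flow $\exp(t(E+F))$ from the $\Theta$-principal $\mathfrak{sl}_2$, a linear contraction on an affine chart containing $\overline{D}_\Theta$ that descends to $\mathsf{F}_{\Theta'}$, and the cone-over-a-sphere argument; the paper proves that criterion inline instead of citing Galashin--Karp--Lam. Your treatment of part~(2) also matches. However, the step that supplies the one hypothesis the criterion actually needs, $g_t\cdot\overline{D}_\Theta\subset D_\Theta$, rests on a false claim.

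\textbf{The false claim.} You assert that $G_\Theta^{\geq0}\cdot p_\Theta$ is closed and hence equals $\overline{D}_\Theta$. In fact $G_\Theta^{\geq0}\cdot p_\Theta=U_\Theta^{\opp\geq0}\cdot p_\Theta$, which lies in the affine chart of points transverse to $p'_\Theta$. On the other hand $p'_\Theta\in\overline{D}_\Theta$: for instance $y(s)\cdot p_\Theta\to p'_\Theta$ as $s\to\infty$, and in $\mathrm{Lag}(\R^{2n})$ the closed diamond contains $L_\infty$. Corollary~\ref{cor_proper} only says that $G_\Theta^{\geq 0}$ is closed in $G$. The orbit map $G\to\mathsf{F}_\Theta$ has non-compact fibres and is not a closed map, so closedness does not pass to the image. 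This is exactly why the paper defines the non-negative part as a closure.

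\textbf{The missing idea: nestedness of diamonds.} This is what the paper uses. Let $g\in G_\Theta^{>0}$. By Corollary~\ref{cor:positive}, the quadruple $(p_\Theta,g\cdot p_\Theta,g\cdot p'_\Theta,p'_\Theta)$ is positive. The diamond $g\cdot D_\Theta$, with extremities $g\cdot p_\Theta$ and $g\cdot p'_\Theta$, is contained in $D_\Theta=G_\Theta^{>0}\cdot p_\Theta$, because $g\,G_\Theta^{>0}\subset G_\Theta^{>0}$. By uniqueness of the nested pair of diamonds (fact~3 in Section~\ref{sec_diamonds}), its closure $g\cdot\overline{D}_\Theta$ lies in $D_\Theta$. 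The paper phrases this as $h_t\cdot\overline{D}_\Theta\subset h_s\cdot D_\Theta$ for $t>s$, using positivity of $(h_s\cdot p_\Theta,h_t\cdot p_\Theta,h_t\cdot p'_\Theta,h_s\cdot p'_\Theta)$.

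\textbf{A second, smaller gap.} Your argument that $g_t\in G_\Theta^{>0}$ is circular. The product formula only yields $g_{t/2}\in G_\Theta^{\geq0}$, and $G_\Theta^{\geq0}G_\Theta^{\geq0}\subset G_\Theta^{\geq0}$ does not upgrade this to $G_\Theta^{>0}$. A direct fix is the factorization in $\SL_2(\R)$:
\[
\bigl(\begin{smallmatrix}\cosh t&\sinh t\\ \sinh t&\cosh t\end{smallmatrix}\bigr)
=\bigl(\begin{smallmatrix}1&0\\ \tanh t&1\end{smallmatrix}\bigr)
\bigl(\begin{smallmatrix}\cosh t&0\\ 0&(\cosh t)^{-1}\end{smallmatrix}\bigr)
\bigl(\begin{smallmatrix}1&\tanh t\\ 0&1\end{smallmatrix}\bigr).
\]
It lifts to the cover as in Lemma~\ref{lem_braid_sl2}: both sides are continuous in $t$ and equal $e$ at $t=0$. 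This gives $g_t=y(\tanh t)\,\check{\chi}(\cosh t)\,x(\tanh t)\in U_\Theta^{\opp>0}L_\Theta^\circ U_\Theta^{>0}=G_\Theta^{>0}$ for $t>0$.

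With these two repairs, the rest of your argument goes through: the chart on $\mathsf{F}_{\Theta'}$, the ball criterion, and part~(2).
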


\begin{remark}
In the general case, when $\Theta'$ is neither contained in, nor contains $\Theta$, the non-negative part $\mathsf{F}_{\Theta'}^{\geq 0}$ is the projection of the non-negative part $\mathsf{F}_{\Theta' \cup \Theta}^{\geq 0}$, thus the projection of a fiber bundle as in~(\ref{item2_thm:nonneg_ball}). So one can not expect this to be homeomorphic to a closed ball. The examples for $G= \Sp_{2n}(\R)$ discussed in the next section illustrate this clearly.
\end{remark}

In the case of total positivity, \ie when $\Theta = \Delta$, statement~(\ref{item1_thm:nonneg_ball}) was proved in \cite{GalashinKarpLam, GalashinKarpLam_Gr}.  The proof presented here is new in the total positive case and is simpler than the one in \cite{GalashinKarpLam, GalashinKarpLam_Gr}.   We only use 
 the $\Theta$-principal $\mathfrak{sl}_2(\R)$, associated to the positive structure, and its action
  on the flag variety. 
  
\medskip

Point~(\ref{item2_thm:nonneg_ball}) of Theorem~\ref{thm:nonneg_ball} follows from the description of the non-negative (\resp positive) part (Section~\ref{sec:pos_part_other}) and, for the second assertion, from the point~(\ref{item1_thm:nonneg_ball}) applied to \(\Theta'=\Theta\).

\smallskip

Let us address point~(\ref{item1_thm:nonneg_ball}) of the theorem ; let thus~\(\Theta'\) be a subset of~\(\Theta\).

Since the map \(\pi_{\Theta'}^{\Theta}\colon \mathsf{F}_\Theta \to \mathsf{F}_{\Theta'}\) is a submersion (and hence is open) and is proper, we have that \(\mathsf{F}_{\Theta'}^{>0}= \pi_{\Theta'}^{\Theta} (\mathsf{F}_{\Theta}^{>0})\) is open and that \(\mathsf{F}_{\Theta'}^{\geq 0}= \pi_{\Theta'}^{\Theta} (\mathsf{F}_{\Theta}^{\geq 0})\).  Furthermore, \(\mathsf{F}_{\Theta'}^{\geq 0}\) is the closure of \(\mathsf{F}_{\Theta'}^{> 0}\) and \(\partial F_{\Theta'}^{\geq 0} = \mathsf{F}_{\Theta'}^{\geq 0} \smallsetminus F_{\Theta'}^{> 0}\).

For all~\(t\) in~\(\R\), set \(h_t =\exp(tE+tF)\in G^\circ\) (where \(E\) and~\(F\) belong to the \(\Theta\)-principal \(\mathfrak{sl}_2\)-triple, \cf Section~\ref{sec_theta_prin}).  Then \((h_t)_{t\in \R}\) is a \(1\)-parameter subgroup of~\(G\).  By properties of the \(\Theta\)-principal \(\mathfrak{sl}_2\), one has that, for all~\(t\neq 0\), the element \(h_t\) acts proximally on~\(\mathsf{F}_\Theta\) and, denoting by \(p^+\) and \(p^-\) the elements of \(\mathsf{F}_\Theta\) that are the common attracting and repelling fixed points of~\(h_t\) for \(t>0\), one has that the quadruple \((p_\Theta, p^+, p_{\Theta}^{\prime}, p^-)\) is positive.  

Let \(r=\pi_{\Theta'}^{\Theta}(p^+)\) so that \(r\)~is the attracting fixed point of~\(h_t\) in \(\mathsf{F}_{\Theta'}\), for all positive~\(t\).  We will prove that \(\partial \mathsf{F}_{\Theta'}^{\geq 0}\) is homeomorphic to a sphere and that the map
\begin{align*}
  \psi \colon [ 0, +\infty\mathclose{[} \times \partial \mathsf{F}_{\Theta'}^{\geq 0} & \longrightarrow \mathsf{F}_{\Theta'}^{\geq 0} \smallsetminus \{r\}\\
  (t, x) & \longmapsto h_t\cdot x
\end{align*}
is a homeomorphism.  This will enable us to conclude that \(\mathsf{F}_{\Theta'}^{\geq 0}\) is homeomorphic to the cone over a sphere and is then homeomorphic to a closed ball and, taking interiors, that \(\mathsf{F}_{\Theta'}^{> 0}\) is homeomorphic to an open ball.

Let us prove first the injectivity of~\(\psi\); this follows from the property that, for all \(t>s\), one has
\begin{equation}
  \label{eq:nestedness_ht}
  h_t \cdot \mathsf{F}_{\Theta'}^{\geq 0} \subset h_s \cdot \mathsf{F}_{\Theta'}^{> 0}.
\end{equation}
In order to establish~\eqref{eq:nestedness_ht}, as \(\mathsf{F}_{\Theta'}^{\geq 0}\) and \(\mathsf{F}_{\Theta'}^{> 0}\) are the homeomorphic images of \(\mathsf{F}_{\Theta}^{\geq 0}\) and \(\mathsf{F}_{\Theta}^{> 0}\) by the equivariant map \(\pi_{\Theta'}^{\Theta}\), it is enough to have the inclusion \(h_t \cdot \mathsf{F}_{\Theta}^{\geq 0} \subset h_s \cdot \mathsf{F}_{\Theta}^{> 0}\); but this holds by the nestedness properties of diamonds since \(h_t \cdot \mathsf{F}_{\Theta}^{> 0}\) is the diamond with extremities \((h_t\cdot p_\Theta, h_t\cdot p_{\Theta}^{\prime})\), \(h_s \cdot \mathsf{F}_{\Theta}^{> 0}\) is the diamond with extremities \((h_s\cdot p_\Theta, h_s\cdot p_{\Theta}^{\prime})\), and the quadruple \((h_s\cdot p_\Theta, h_t\cdot p_\Theta, h_t\cdot p_{\Theta}^{\prime}, h_s\cdot p_{\Theta}^{\prime})\) is positive.

\smallskip

We now establish
that \(\partial \mathsf{F}_{\Theta'}^{\geq 0}\) is homeomorphic to a ball.  For this we will work in~\(\mathcal{O}\) the basin of attraction of~\(h_t\) for \(t>0\).  The open set~\(\mathcal{O}\) is an affine chart containing \(\mathsf{F}_{\Theta'}^{\geq 0}\), and hence~\(r\), and using the parametrization of Section~\ref{sec_flag_var}, we can consider that \(\mathcal{O}\)~is a vector space whose origin is~\(r\) and where \(h_t\)~is a linear contracting transformation.  Let \(S\subset \mathcal{O}\) be the unit sphere for some norm on this vector space (for definiteness, take a Euclidean ball with respect to a basis diagonalizing the action of~\(h_t\)).  Then, for all~\(x\) in \(\mathcal{O}\smallsetminus \{r\}\), the intersection of~\(S\) with \(\{h_t\cdot x\}_{t\in \R}\) (\resp the intersection of \(\partial \mathsf{F}_{\Theta'}^{\geq 0}\) with \(\{h_t\cdot x\}_{t\in \R}\)) consists in exactly one point and this intersection varies continuously with~\(x\) (for the intersection with  \(\partial \mathsf{F}_{\Theta'}^{\geq 0}\), one uses again the nestedness property~\eqref{eq:nestedness_ht}).  We thus get continuous maps \(S\to \partial \mathsf{F}_{\Theta'}^{\geq 0}\) and  \(\partial \mathsf{F}_{\Theta'}^{\geq 0}\to S\) that are obviously inverse one from each other.  As a conclusion, \(\partial \mathsf{F}_{\Theta'}^{\geq 0}\) is homeomorphic to a sphere.

\smallskip

The surjectivity of~\(\psi\) is then a consequence of the fact that~\(r\) is the attracting fixed point of~\(h_t\) (\(t>0\)).

\section{Flag varieties for the symplectic group}\label{sec:symplectic}

In this section we give an explicit description of the positive and non-negative part of symplectic flag varieties when we endow the symplectic group $\Sp_{2n}(\R)$ with the positive structure with respect to $\Theta  = \{\alpha_n\}$. 

\subsection{The positive semigroup}
Let $\omega$ be the standard symplectic form on $\R^{2n}$: if \((x_1, \dots, x_n, y_1, \dots, y_n)\) are the coordinates on~\(\R^{2n}\), then \(\omega=\sum_{i=1}^{n}dx_i\wedge dy_i\).  The symplectic group $\Sp_{2n}(\R)$ consists of matrices 
$ g = \bigl(\begin{smallmatrix} A &B\\C&D\end{smallmatrix}\bigr) \in \GL_{2n}(\R)$ such that $D^T A - B^T C= Id$, $C^T A = A^T C$, $D^TB = B^T D$.

The Levi subgroup \(L_\Theta^\circ\) is the subgroup consisting of matrices \(\bigl(\begin{smallmatrix} A &0\\0& {A^T}^{-1}\end{smallmatrix}\bigr)\), where $A \in \mathrm{GL}(n,\R)$ with  $\det(A) >0$. 
The unipotent subgroup $U_\Theta$ consists of the matrices \(\bigl(\begin{smallmatrix} \Id&  M\\0&\Id\end{smallmatrix}\bigr) \), where $M \in \mathrm{Sym}_{n}(\R)$ is a symmetric matrix. Elements in the  positive unipotent subgroup 
\(U_\Theta^{>0}\) are those where $M \in \mathrm{Sym}^{+}_{n}(\R)$ is positive definite. Similarly, \(U_{\Theta}^{\opp} \) consists of all matrices \(\bigl(\begin{smallmatrix} \Id &0\\N &\Id\end{smallmatrix}\bigr)\) with $N \in \mathrm{Sym}_{n}(\R)$, 
and elements in the positive semigroup \(U_\Theta^{\opp, > 0}\) are those where $N \in \mathrm{Sym}^{+}_{n}(\R)$ is positive definite.

Thus, the decomposition $G_{\Theta}^{>0} = U_{\Theta}^{> 0} \cdot L_{\Theta}^{\circ} \cdot U_{\Theta}^{\opp > 0}$ implies that an element $g = \bigl(\begin{smallmatrix} A &B\\C&D\end{smallmatrix}\bigr) \in \Sp_{2n}(\R)$ is in the positive semigroup $\Sp_{2n}^{>0}(\R)$ 
if and only if $\det(A) >0$ and $ A^{-1} B \in \mathrm{Sym}^{+}_{n}(\R)$, $ C A^{-1} \in  \mathrm{Sym}^{+}_{n}(\R)$,  
where $\mathrm{Sym}^{+}_{n}(\R)$ denotes the space of symmetric positive definite matrices. 
In fact $g \in \Sp_{2n}(\R)^{>0} $ is then $$g= \begin{pmatrix} \Id &0\\C A^{-1} &\Id\end{pmatrix} \cdot \begin{pmatrix} A &0\\0& {A^T}^{-1}\end{pmatrix}\cdot \begin{pmatrix} \Id& A^{-1} B\\0&\Id\end{pmatrix}.$$

\subsection{The space of Lagrangians} 
The flag variety $\mathsf{F}_{\Theta}$ is the space of Lagrangian subspaces $\mathrm{Lag}(\R^{2n})$. The Lagrangian subspaces corresponding to $P_{\Theta}$ and $P_{\Theta}^{\opp}$ are denoted by $L_0$ and $L_\infty$ respectively. 

Since $L_0$ and $L_\infty$ are transverse, $\R^{2n}  = L_0 \oplus L_\infty$. Any Lagrangian~$L$ which is transverse to~$L_\infty$ can be written as graph of a linear map $T_L\colon L_0 \mapsto L_\infty$. Since $L_0, L, L_\infty$ are all Lagrangian subspaces, for all~$v$ and~$w$ in~$L_0$, we have 
$0 = \omega(v+T_L(v), w+T_L(w)) = \omega(v, T_L(w)) - \omega (w, T_L(v))$, thus $T_L$ is symmetric and defines a quadratic form $q_L(v) = \omega(v, T_L(v))$ on $L_0$. 
The standard diamond  $\mathsf{F}_{\Theta}^{>0} = D_\Theta$ with extremities~$L_0$ and~$L_\infty$ consists of all Lagrangian~$L$ for which the quadratic form~$q_L$ on~$L_0$ is positive definite. The opposite diamond consists of those Lagrangians for which the quadratic form is negative definite. 

\begin{remark}
The quadratic form gives rise to an invariant of triples of Lagrangians, the  classical Maslov index $\mu(L_0, L, L_\infty)$, defined as the signature of this quadratic form.  In fact it can be defined without assuming that the Lagrangians are transverse. 
When the Maslov index is equal to~$n$, then $q_L$~is positive definite.
\end{remark} 

The map \(M\mapsto \bigl(\begin{smallmatrix}
\Id & M \\ 0 & \Id
\end{smallmatrix} \bigr)\cdot L_\infty\) gives an identification of $\mathrm{Sym}_{n}(\R)$ with  the affine chart of $\mathrm{Lag}(\R^{2n})$ with respect to $L_\infty$, where $L_0$ corresponds to the zero matrix.  In this affine chart $D_\Theta$~is identified with the cone of positive definite symmetric matrices $\mathrm{Sym}^{+}_{n}(\R)$.

To give an explicit description of the non-negative part $\mathsf{F}_{\Theta}^{\geq 0}$ we choose a different affine chart.  We pick an affine chart with respect to a Lagrangian $L$ in the diamond opposite to $D_\Theta$ and identify this chart with $\mathrm{Sym}_{n}(\R)$ in such a way that $L_0$ corresponds to the zero matrix and $L_\infty$ corresponds to the identity matrix. 

In this chart $\overline{D}_{\Theta}$ has the following description 
\[\overline{D}_{\Theta}  = \{ M\in \mathrm{Sym}_{n}(\R)\mid  M \in \mathrm{Sym}^{+}_{n}(\R), \, \Id-M \in \mathrm{Sym}^{+}_{n}(\R)\} \]

This gives explicit equations for $\mathsf{F}_{\Theta}^{\geq 0}$.  For a more combinatorial description of $\mathsf{F}_{\Theta}^{\geq 0}$ we refer the reader to \cite{Xie}. 

\subsection{Symplectic flag varieties}
Flag varieties associated to the symplectic group are flag varieties of isotropic flags. 
Let \(\boldsymbol{k}=(k_1, \dots, k_\ell)\) be a \(\ell\)-tuple of integers such that $0<k_1 < \cdots < k_\ell \leq n$ and denote by $\mathsf{F}^{\Sp}_{\mathbf{k}}(\R^{2n})$ the space of isotropic (partial) flags \((E_1, \dots, E_\ell)\) in $\R^{2n}$.  The space of Lagrangian subspaces is denoted  $\mathrm{Lag}(\R^{2n})$.  When \(k_\ell<n\) and letting \(\boldsymbol{k'}=(k_1, \dots, k_\ell,n)\), 
there  natural  projections: 
$\pi^{\boldsymbol{k'}}_{n}\colon \mathsf{F}^{\Sp}_{\boldsymbol{k'}}(\R^{2n}) \to \mathrm{Lag}(\R^{2n})$,
and 
$\pi^{\boldsymbol{k'}}_{\boldsymbol{k}}\colon \mathsf{F}^{\Sp}_{\boldsymbol{k'}}(\R^{2n}) \to \mathsf{F}^{\Sp}_{\boldsymbol{k}}(\R^{2n})$,

The subset $\pi^{\boldsymbol{k'}}_{\boldsymbol{k}} ((\pi^{\boldsymbol{k'}}_{n})^{-1}(L_0))$ then consists of all the flags that are contained in $L_0$. In particular, the Levi subgroup $L^{\circ}$ acts transitively on this set. 

\begin{corollary}\label{cor:symplectic}
The positive part $\mathsf{F}^{\Sp}_{\boldsymbol{k}}(\R^{2n})^{>0}$ consists precisely of those isotropic flags \((E_1, \dots, E_\ell)\), where the subspace~\(E_\ell\) (and hence any $E_i$, $1\leq i\leq \ell$) is the graph of a positive definite symmetric map from a subspace of~$L_0$ to~$L_\infty$.
\end{corollary}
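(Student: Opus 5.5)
The plan is to use the description of the positive part from Section~\ref{sec:pos_part_other}: with \(\Theta=\{\alpha_n\}\) and \(\Xi=\Theta\cup\Theta'\), one has \(\mathsf{F}^{>0}_{\Theta'} = U_{\Theta}^{\opp>0}\cdot \pi^{\Xi}_{\Theta'}((\pi^{\Xi}_{\Theta})^{-1}(p_\Theta))\). By the paragraph preceding the statement, \(\pi^{\boldsymbol{k'}}_{\boldsymbol{k}}((\pi^{\boldsymbol{k'}}_{n})^{-1}(L_0))\) is exactly the set of isotropic flags \((F_1,\dots,F_\ell)\) of type \(\boldsymbol{k}\) with \(F_\ell\subset L_0\). (If \(k_\ell=n\), we take \(\boldsymbol{k'}=\boldsymbol{k}\) and the same description holds.) Hence
\[\mathsf{F}^{\Sp}_{\boldsymbol{k}}(\R^{2n})^{>0}=\Bigl\{ u\cdot(F_1,\dots,F_\ell) \,\Bigm|\, u=\bigl(\begin{smallmatrix}\Id&0\\N&\Id\end{smallmatrix}\bigr),\ N\in\mathrm{Sym}^{+}_{n}(\R),\ F_\ell\subset L_0\Bigr\},\]
and the task is to identify this set with the flags described in the statement.

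First I compute \(u\cdot F\) for a subspace \(F\subset L_0\). Write \(\R^{2n}=L_0\oplus L_\infty\) with \(L_0\) the \(x\)-coordinates, so that \(u\) maps \(v\in L_0\) to \(v+Nv\) with \(Nv\in L_\infty\). Thus \(u\cdot F\) is the graph of \(T=N|_F\colon F\to L_\infty\), and the associated form \(q(v)=\omega(v,Tv)=v^TNv\) on \(F\) is positive definite. Consequently every \(E_i=u\cdot F_i\) is the graph of the restriction of \(T\) to \(F_i\), which is again positive definite. This gives the inclusion ``\(\subset\)''. Here ``positive definite symmetric map from a subspace \(F\) of \(L_0\) to \(L_\infty\)'' is interpreted as a map \(T\) with \(v\mapsto\omega(v,Tv)\) positive definite on \(F\); symmetry is automatic from isotropy, by the same computation as for Lagrangians.

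Conversely, let \((E_1,\dots,E_\ell)\) be an isotropic flag with \(E_\ell\) the graph of \(T\colon F\to L_\infty\), \(F\subset L_0\), where \(\omega(v,Tv)\) is positive definite on \(F\). Then each \(E_i\) is the graph of \(T|_{F_i}\) with \(F_i\subset F\) the projection of \(E_i\) to \(L_0\). Identify \(L_\infty\) with \(L_0^*\) via \(\omega\). Then \(T\) is a linear map \(F\to L_0^*\), and isotropy of \(E_\ell\) means that the bilinear form \(b(v,w)=\omega(v,Tw)\) on \(F\) is symmetric; by hypothesis \(b\) is positive definite. The key step is extending \(T\) to \(N\colon L_0\to L_0^*\) that is symmetric and positive definite with \(N|_F=T\). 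For this, write \(L_0=F\oplus F'\) with \(F'\) any complement. The values \(Tw\in L_0^*\) prescribe the blocks \(N_{FF}=b\) and \(N_{F'F}\), and then \(N_{FF'}=N_{F'F}^T\) by symmetry. Choose \(N_{F'F'}=N_{F'F}\,b^{-1}N_{FF'}+\Id\), which makes the Schur complement \(\Id\), so \(N\) is positive definite. Then \(u\cdot(F_1,\dots,F_\ell)=(E_1,\dots,E_\ell)\), which gives ``\(\supset\)''.

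The main obstacle is this positive definite extension; the Schur complement choice resolves it. The rest is bookkeeping of the identification of \(L_\infty\) with \(L_0^*\) and signs in \(\omega\), which I would fix at the start so that the diamond \(D_\Theta\) corresponds to positive \(q_L\), as in the previous subsection.
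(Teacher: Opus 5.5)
Your proposal is correct and follows the paper's route: the positive part is the $U_{\Theta}^{\opp>0}$-orbit of the flags contained in $L_0$ (item~(3) of the proposition in Section~\ref{sec:pos_part_other}), and $\bigl(\begin{smallmatrix}\Id&0\\N&\Id\end{smallmatrix}\bigr)$ sends $F\subset L_0$ to the graph of $N|_F$. The paper leaves the reverse inclusion implicit, namely that a positive definite symmetric map on a subspace of $L_0$ extends to one on all of $L_0$; your Schur complement argument supplies this step correctly.
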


\subsection{The positive part of projective space} 
We give a more explicit description of the positive and non-negative isotropic flag varieties. 
For this look at projective space, the space of (isotropic) lines which is $ \mathbb{P}^{2n-1}(\R)$. 

 The two transverse Lagrangians $L_0$ and $L_\infty $ fixed by $P_{\Theta}$ and $P_{\Theta}^{\opp}$ give a decomposition $\R^{2n} = L_0 \oplus L_\infty$. Given a non-zero vector $v$ write $v = v_0 + v_\infty$ with respect to this decomposition. 

The symplectic form defines a quadratic form $q\colon \R^{2n} \to \R$, $ q(v) = \omega(v_0, v_\infty)$. This quadratic form is non-degenerate of signature $(n,n)$. 
We denote
\begin{align*}
  \mathcal{H}^{>0} &= \mathbb{P}(\{ v\in \R^{2n} \mid q(v) >0\}),\\ 
  \mathcal{H}^{\geq 0} &= \mathbb{P}(\{ v\in \R^{2n}\smallsetminus \{0\} \mid q(v) \geq 0\}),\\
  \intertext{and}  
  \mathcal{H}^{<0} &= \mathbb{P}(\{ v\in \R^{2n} \mid q(v) <0\}).
\end{align*}

Corollary~\ref{cor:symplectic} gives that $\mathsf{F}^{\Sp}_{1}(\R^{2n})^{>0}  = \mathcal{H}^{>0}$, and 
$\mathsf{F}^{\Sp}_{1}(\R^{2n})^{\geq 0} = \mathcal{H}^{\geq 0}$. In particular the non-negative and the non-positive part of projective space cover $\mathbb{P}^{2n-1}(\R)$, their intersection is the hypersurface $\mathcal{H} = \mathbb{P}(\{ v\in \R^{2n}\smallsetminus \{0\} \mid q(v) =0\})$, which is a quadratic hypersurface. If we pick a symplectic basis $e_1, \dots, e_n, f_1, \dots, f_n$ such that $L_0$ is the span of the $e_i$ and $L_\infty$ is the span of the $f_i$. We can write a vector $v = \sum_{i= 1}^n a_i e_i + \sum_{i=1}^{n} b_i f_i$, then $\mathcal {H}$ is given by the equation $\sum_{i=1}^n a_i b_i = 0$. Similarly, $\mathcal{H}^{\geq 0} $ is given by $\sum_{i=1}^n a_i b_i \geq 0$.

Thus, we obtain the following more geometric description of the positive and non-negative parts of symplectic flag varieties, which improves on Corollary~\ref{cor:symplectic}:
\begin{corollary}
%
The positive part $\mathsf{F}^{\Sp}_{\boldsymbol{k}}(\R^{2n})^{>0}$ (\resp the non-negative part $\mathsf{F}^{\Sp}_{\boldsymbol{k}}(\R^{2n})^{\geq 0}$) consists of those isotropic flags \((E_1, \dots, E_\ell)\), where  $\mathbb{P}(E_\ell) \subset \mathcal{H}^{>0}$ (\resp $\mathbb{P}(E_\ell) \subset \mathcal{H}^{\geq 0}$).
\end{corollary}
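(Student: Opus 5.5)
The plan is to use the general description of positive parts from Section~\ref{sec:pos_part_other}, together with the explicit form of $G_{\Theta}^{>0}$ and of $\mathcal{H}^{>0}$. Set $\boldsymbol{k'}=(k_1,\dots,k_\ell,n)$; the case $k_\ell=n$ is handled the same way. The positive part $\mathsf{F}^{\Sp}_{\boldsymbol{k}}(\R^{2n})^{>0}$ is the $U_{\Theta}^{\opp>0}$-orbit of the set of isotropic flags contained in $L_0$, and its closure is the non-negative part. By Corollary~\ref{cor:symplectic}, a flag is positive if and only if $E_\ell$ is the graph of the restriction to a $k_\ell$-dimensional subspace $V\subset L_0$ of a map $N\colon L_0\to L_\infty$ given by a positive definite symmetric matrix.

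\emph{Positive case.} For $v\in L_0$ we have $q(v+Nv)=\omega(v,Nv)=q_N(v)>0$ when $v\neq 0$, so $\mathbb{P}(E_\ell)\subset\mathcal{H}^{>0}$. For the converse, let $E$ be isotropic of dimension $k=k_\ell$ with $q>0$ on $E\smallsetminus\{0\}$. Since $q$ vanishes on $L_\infty$, $E\cap L_\infty=0$, so the projection of $E$ to $L_0$ is injective with image $V$. Hence $E$ is the graph of a linear map $T\colon V\to L_\infty$, and the form $(v,w)\mapsto\omega(v,Tw)$ is positive definite on $V$. Isotropy of $E$ gives $\omega(v,Tw)=\omega(w,Tv)$, so this bilinear form is symmetric.

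It then remains to extend $T$ to a positive definite symmetric $N$ on all of $L_0$. Identify $L_\infty\cong L_0^*$ via $\omega$. Then $T$ is a symmetric positive definite form $\beta$ on $V$ together with a specified functional on $V$ for each vector of $V$. Choose a complement $V'$ of $V$ in $L_0$ and define $N$ as $\beta\oplus\Id$ in adapted coordinates, with zero off-diagonal block. The required condition is only that $N|_V$ composed into $L_0^*$ equals $T$, that is, that the restriction to $V$ of $Nv$ agrees with $\beta(v,\cdot)$. The functional $Nv$ on $V'$ is free and is taken to be $0$. This $N$ is positive definite, so $E$ is positive by Corollary~\ref{cor:symplectic}. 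Since the conditions only involve $E_\ell$, and every $E_i\subset E_\ell$ is then automatically a graph, this settles the positive case.

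\emph{Non-negative case.} The inclusion of $\mathsf{F}^{\geq0}$ in the set $\{\mathbb{P}(E_\ell)\subset\mathcal{H}^{\geq0}\}$ follows by taking closures, since the latter set is closed. The converse is the main obstacle, namely that every flag with $q\geq0$ on $E_\ell$ is a limit of flags with $q>0$. I would first try a deformation by the one-parameter group $y(s)=\exp(sF)\in U_{\Theta}^{\opp>0}$ for $s>0$, which acts as $v_0+v_\infty\mapsto v_0+(v_\infty+sN_0v_0)$ with $N_0$ positive definite. This gives $q(y(s)v)=q(v)+s\,\omega(v_0,N_0v_0)$, which is positive unless $v_0=0$. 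When $E_\ell\cap L_\infty\neq0$ this fails, so I would first also apply $x(s)$, which pushes the $L_\infty$ part into $L_0$: then $q(x(s)v)=q(v)+s\,\omega(N_0'v_\infty,v_\infty)$ with the appropriate sign convention. The composite $y(s)x(s)$ preserves the flag type, and combining the two strictly increases $q$ on every nonzero vector. The isotropic flag $y(s)x(s)\cdot(E_i)$ then satisfies $q>0$ on $E_\ell$ and converges to $(E_i)$ as $s\to0$. I would check carefully the sign convention for $x(s)$, which must also lie in the positive semigroup (it does, being in $U_{\Theta}^{>0}$), so that both terms are nonnegative and their sum is positive on every nonzero vector.
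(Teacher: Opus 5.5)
Your overall route matches the paper's: the paper reads the statement off Corollary~\ref{cor:symplectic}, using $q(v+Tv)=\omega(v,Tv)$, and then passes to closures. However, the extension step in your positive case fails as written.

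Under $L_\infty\cong L_0^*$, each $Tv$ is a functional on all of $L_0$, not only on $V$. So $T$ consists of $\beta$ together with a \emph{prescribed} map $\gamma\colon V\to (V')^*$, $\gamma(v)(v')=\omega(v',Tv)$. The condition $N|_V=T$ means $Nv=Tv$ in $L_\infty$, i.e.\ equality of the full functionals. Hence the values of $Nv$ on $V'$ are not free.

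With an arbitrary complement and zero off-diagonal block, the graph of $N|_V$ differs from $E$ as soon as $\gamma\neq 0$. For instance, take $n=2$, $E=\R(e_1+f_1+f_2)$ (so $q>0$ on $E$), $V=\R e_1$ and $V'=\R e_2$. Your $N$ is then the identity, and its graph over $V$ is $\R(e_1+f_1)\neq E$. So, as written, you have not shown that $E$ is positive.

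The repair is short, and there are two options:
\begin{enumerate}
\item Choose $V'=\{w\in L_0\mid \omega(w,Tv)=0 \text{ for all } v\in V\}$. This is a complement of $V$ because $T$ is injective and $\omega(v,Tv)>0$ for $v\neq 0$. For this choice $\gamma=0$, so $\beta\oplus\Id$ really does extend $T$.
\item Keep any complement and take, in adapted bases, $N=\bigl(\begin{smallmatrix}\beta&\gamma^T\\ \gamma&\gamma\beta^{-1}\gamma^T+\Id\end{smallmatrix}\bigr)$. This is positive definite by the Schur complement criterion.
\end{enumerate}
This extension lemma is exactly what the paper leaves implicit in the phrase ``positive definite symmetric map from a subspace of $L_0$ to $L_\infty$'', so it is worth getting right.

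Your non-negative argument is correct. It also supplies the density statement that the paper takes for granted when it passes to closures. Write $v=(a,b)\in L_0\oplus L_\infty$, $x(s)=\bigl(\begin{smallmatrix}\Id&sM_0\\0&\Id\end{smallmatrix}\bigr)$ and $y(s)=\bigl(\begin{smallmatrix}\Id&0\\sN_0&\Id\end{smallmatrix}\bigr)$, with $M_0,N_0$ positive definite. Then $q(y(s)x(s)v)=q(v)+s\,b^TM_0b+s\,(a+sM_0b)^TN_0(a+sM_0b)$, which is strictly greater than $q(v)$ for $v\neq 0$. So the sign convention works out as you hoped.
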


Let us compare this positive structure on $\mathsf{F}^{\Sp}_{1}(\R^{2n}) = \mathbb{P}(\R^{2n}) $  with the positive part of $\mathbb{P}(\R^{2n})$ with respect to the total positive structure with respect to the standard opposite minimal parabolic subgroups associated to the symplectic basis $e_1, \dots, e_n, f_1, \dots, f_n$. 

A point in $\mathbb{P}(\R^{2n})$ spanned by $v = \sum_{i= 1}^n a_i e_i + \sum_{i=1}^{n} b_i f_i$ is contained in the positive part  with respect to the total positive structure if and only if $a_i>0$ and $b_i >0$ for all~$i$, whereas it is in $\mathsf{F}^{\Sp}_{1}(\R^{2n})^{>0}$ if $\sum_{i=1}^n a_i b_i >0$. 

\begin{remark}
The hypersurface $\mathcal{H}$ and the half-spaces $\mathcal{H}^{>0}$, $\mathcal{H}^{<0}$ have been introduced by the authors some years ago to study the geometric structures associated to maximal representations of surface groups into $\Sp_{2n}(\R)$. These geometric structures can be decomposed along hypersurface like $\mathcal{H}^0$, which allows one to get a good control on the topology of quotients of domain of discontinuity. An important feature is the nestedness of the half-spaces $\mathcal{H}^{>0}$ for positive quadruples of Lagrangians, which in the perspective here follows immediately from the nestedness of diamonds. This nestedness property  was used by Burelle and Treib \cite{Burelle_Treib} to construct maximal Schottky groups. 
\end{remark}


\bibliographystyle{amsalpha}
\bibliography{geometric-positivity.bib}

\end{document}